\newtheorem{theorem}{Theorem}[section]
\newtheorem{lemma}[theorem]{Lemma}
\newtheorem{remark}[theorem]{Remark}
\newtheorem{corollary}[theorem]{Corollary}
\newtheorem{proposition}[theorem]{Proposition}
\theoremstyle{definition}
\newtheorem{definition}{Definition}[section]
\numberwithin{equation}{section}
\newcommand{\bx}{\bm{x}}
\newcommand{\bX}{\bm{X}}
\DeclareMathOperator*{\argmax}{arg\,max}
\begin{document}
\begin{center}
\Large{\textbf{Bayesian mode and maximum estimation and accelerated rates of contraction}}
\end{center}

\begin{center}
\textbf{William Weimin Yoo and Subhashis Ghosal}\\
\textit{Leiden University and North Carolina State University}
\end{center}

\begin{abstract}
We study the problem of estimating the mode and maximum of an unknown regression function in the presence of noise. We adopt the Bayesian approach by using tensor-product B-splines and endowing the coefficients with Gaussian priors. In the usual fixed-in-advanced sampling plan, we establish posterior contraction rates for mode and maximum and show that they coincide with the minimax rates for this problem. To quantify estimation uncertainty, we construct credible sets for these two quantities that have high coverage probabilities with optimal sizes. If one is allowed to collect data sequentially, we further propose a Bayesian two-stage estimation procedure, where a second stage posterior is built based on samples collected within a credible set constructed from a first stage posterior. Under appropriate conditions on the radius of this credible set, we can accelerate optimal contraction rates from the fixed-in-advanced setting to the minimax sequential rates. A simulation experiment shows that our Bayesian two-stage procedure outperforms single-stage procedure and also slightly improves upon a non-Bayesian two-stage procedure.
\end{abstract}

\textbf{Keywords:} Mode, Maximum value, Posterior contraction, Credible set, Nonparametric regression, Tensor-product B-splines, Anisotropic H\"{o}lder space, Sequential, Two-stage.\\

\textbf{MSC2010 Classification:} Primary 62G05, 62L12; secondary 62G08, 62G15, 62L05

\section{Introduction}
Consider noisy measurements $Y_1,\ldots,Y_n$ of an unknown smooth function $f$ at locations $\bX_1,\ldots,\bX_n\in [0,1]^d$ given by the nonparametric regression model
\begin{align}\label{eq:mainprob}
Y_i=f(\boldsymbol{X}_i)+\varepsilon_i,\quad i=1,\dotsc,n,
\end{align}
where the regression errors $\varepsilon_1,\ldots,\varepsilon_n$ are modeled as independent and identically distributed (i.i.d.) $\mathrm{N}(0,\sigma^2)$ with unknown standard deviation $0<\sigma<\infty$. The covariates can be deterministic or can be drawn as i.i.d samples from some fixed distribution independently of the regression errors.

In this paper, we consider the problem of estimating the mode $\bm{\mu}$ which marks the location of the maximum of $f$, and the value of this maximum $M=f(\bm{\mu})=\sup \{f(\bx): \bx\in [0,1]^d\}$, assuming that $\boldsymbol{\mu}$ is unique. The problem can be thought of as optimization in the presence of noise and has wide range of applications. For instance, searching for the optimal factor configurations in response surface methodology, locating peaks in bacteria (\citet{silverman}) and human (\citet{muller1985}) growth curves, or to classify and compare curves arising from longitudinal endocrinological data (\citet{jorgensen}).

The problem of estimating the mode and maximum of an isotropic regression function is well studied in the frequentist literature. M\"uller \cite{muller1985,muller1989} and Shoung and Zhang \cite{zhang2001} provided convergence rates for univariate regression, with the multivariate case obtained by \citet{muller2003}. Furthermore, Hasminski\u{i} \cite{hasminskii1979} and Tsybakov \cite{tsybakov1990} showed that for isotropic H\"{o}lder regression function of order $\alpha$ that is also $\alpha$-continuously differentiable, the minimax rates for estimating $\boldsymbol{\mu}$ is $n^{-(\alpha-1)/(2\alpha+d)}$ and for $M$ is $n^{-\alpha/(2\alpha+d)}$, under the usual sampling plan of choosing samples that are fixed in advance.

However if one is allowed to choose samples based on information gathered from past samples, the structure of the problem changes and we are in the sequential design setting. In this case, the minimax sequential rates of estimating $\boldsymbol{\mu}$ and $M$ are respectively $n^{-(\alpha-1)/(2\alpha)}$ and $n^{-1/2}$ (see \citet{max1988,tsybakov1990b,pelletier}). When compared with the fixed design case, it is clear that sequential rates are uniformly better and in fact $M$ has successfully achieved the parametric rate. Moreover, it also shows that judicious use of past information to guide future actions removes the effect of dimension $d$ on the rates. On the more practical side, Kiefer and Wolfowitz \cite{kiefer1952} and Blum \cite{blum1954} used Robbins-Monro type procedures that is consistent; while Fabian \cite{fabian1967}, Dippon \cite{dippon2003} and Mokkadem and Pelletier \cite{pelletier} each constructed sequential procedures that actually attain the minimax rates.

In actual practice, fully sequential design is costly to implement, because sample collection time is longer and the required logistics in collecting data in many stages is much more complicated than single-stage procedures. This then gave rise to the idea of a two-stage procedure, which offers a compromise between the added cost of doing a follow-up experiment and the added accuracy gained from it. At the first stage, limited samples are taken to give a pilot estimate of some quantity (e.g., mode), and the second stage samples are collected in the vicinity of this preliminary estimate. It was then shown in \citet{Lan,Tang} and \citet{2stage} that an extra second stage is enough to accelerate the convergence rates and in some cases propel them to attain the minimax sequential rates.

To the best of our knowledge however, there are hardly any such results and procedures in the Bayesian literature, whether it is in the fixed design, sequential or two-stage cases. Therefore, it is hoped that this paper will fill in this gap by giving a Bayesian solution to this problem. As we shall see, there are advantages in using the Bayesian approach, as it provides a natural framework to do two-stage estimation, and it can outperform frequentist procedures by exploiting the shrinkage property of Bayesian estimators.

In the first part of this article, we consider the fixed-in-advance sampling plan and establish single stage posterior contraction rates for $\boldsymbol{\mu}$ and $M$. Our prior consists of tensor product B-splines with Gaussian distributed coefficients, and we endow the error variance with some positive and continuous prior density. We chose this prior because it enables us to derive sharp results by directly analyzing the posterior distribution, and B-splines are efficient to compute. The main challenge here is the non-linear and non-smooth nature of the argmax and max functionals of $f$, and we avoid dealing with them directly by relating the estimation errors of $\boldsymbol{\mu}$ and $M$ with the sup-norm errors for $f$ and its first order partial derivatives. To quantify uncertainty in the estimation procedure, we construct credible sets for $\boldsymbol{\mu}$ and $M$, and show that they have high asymptotic coverage with optimal sizes.

Sequential sampling or more specifically a two-stage procedure can naturally be embedded inside a Bayesian framework, as information gained from an earlier stage can be used to adjust or update one's prior opinion. In the second part of this paper, we propose a Bayesian two-stage procedure for estimating the mode and maximum of $f$. We split the samples into two parts, and use the first part to compute the first stage posterior distribution of $\boldsymbol{\mu}$ and $M$. Using this posterior, we construct a credible set based on the techniques discussed in the first part of the paper. Second stage samples are then sampled uniformly over this set, and they are used to compute the second stage posterior of these two quantities.

We show that this second stage posterior is more concentrated around the truth than its single stage counterpart, and it can accelerate single stage minimax rates to the optimal sequential rates, under appropriate conditions on the radius of the credible set used. We test our procedure in a numerical experiment and the results seem to support our theoretical conclusions. Moreover when compared with a non-Bayesian method proposed in the literature, our Bayesian two-stage procedure seems to outperform slightly in terms of the root mean square error, and this is due to the shrinkage induced by our choice of prior distributions (see Figure \ref{fig:result} below).

Throughout this paper, we will work with a general class of anisotropic H\"{o}lder space, such that we allow $f$ to have different order of smoothness in each dimension. In some of our results below, it will be seen that additional smoothness in other dimensions can help alleviate the loss in accuracy due to less smoothness in some dimensions, and this borrowing of smoothness across dimensions, which is a unique feature of anisotropic spaces, can result in the improvement of the overall rate.

The paper is organized as follows. The next section introduces notations and assumptions. Section \ref{sec:tprior} describes the prior and the resulting posterior distributions of $\boldsymbol{\mu}$ and $M$. Section \ref{sec:tsupnormf} contains main results in the single stage setting on posterior contraction rates and coverage probability of credible sets for these two quantities. We introduce the Bayesian two-stage procedure of estimating $\boldsymbol{\mu}$ and $M$ in Section \ref{sec:bayes}. Section \ref{sec:sim} contains simulation studies for our proposed Bayesian two-stage method. This is then followed by a summary and discussion on future outlook in Section \ref{sec:conclude}. Proofs of our main results are given in Section \ref{sec:proof} and some useful auxiliary results are collected in Section \ref{sec:appendix} Appendix. We delegate some rather routine and technical proofs to a supplementary article \citet{yoo2018supp} to streamline reading.

\section{Notations and assumptions}\label{sec:notation}

Given two numerical sequences $a_n$ and $b_n$, $a_n=O(b_n)$ or $a_n\lesssim b_n$ means $a_n/b_n$ is bounded, while $a_n=o(b_n)$ or $a_n\ll b_n$ means $a_n/b_n\rightarrow0$. Also, $a_n\asymp b_n$ means $a_n=O(b_n)$ and $b_n=O(a_n)$. For stochastic sequence $Z_n$, $Z_n=O_{\mathrm{P}}(a_n)$ means $\mathrm{P}(|Z_n|\leq Ca_n)\rightarrow1$ for some constant $C>0$; while $Z_n=o_{\mathrm{P}}(a_n)$ means $Z_n/a_n\rightarrow0$ in $\mathrm{P}$-probability.

Let $\|\boldsymbol{x}\|_p=(\sum_{k=1}^d |x_k|^p)^{1/p}$, $\|\boldsymbol{x}\|_\infty=\max_{1\le k\le d} |x_k|$ and $\|\boldsymbol{x}\|=\|\boldsymbol{x}\|_2$. Inequality for a vector stands for co-ordinatewise inequality. For a symmetric matrix $\boldsymbol{A}$, let $\lambda_\mathrm{max}(\boldsymbol{A})$ and $\lambda_{\mathrm{min}}(\boldsymbol{A})$ stand for its largest and smallest eigenvalues, and $\|\boldsymbol{A}\|_{(2,2)}=|\lambda_{\mathrm{max}}(\boldsymbol{A})|$. Given another matrix $\boldsymbol{B}$ of the same size, $\boldsymbol{A}\leq\boldsymbol{B}$ means $\boldsymbol{B}-\boldsymbol{A}$ is nonnegative definite. The $L_p$-norm of a function $f$ is denoted by $\|f\|_p$.

We say $\boldsymbol{Z}\sim\mathrm{N}_J(\boldsymbol{\xi},\boldsymbol{\Omega})$ if $\boldsymbol{Z}$ has a $J$-dimensional normal distribution with mean $\boldsymbol{\xi}$ and covariance matrix $\boldsymbol{\Omega}$. By saying that $Z\sim\mathrm{GP}(\xi,\Omega)$, we mean that $\{Z(t),t\in U\}$ is a Gaussian process with $\mathrm{E}Z(t)=\xi(t)$ and $\mathrm{Cov}(Z(s),Z(t))=\Omega(s,t)$ for any $s,t\in U$.

Multi-indexes will be frequently used. Let $\mathbb{N}=\{1,2,\dotsc\}$ be the natural numbers and $\mathbb{N}_0=\mathbb{N}\cup\{0\}$. For $\boldsymbol{i}=(i_1,\dotsc,i_d)^T\in\mathbb{N}_0^d$ and  $\boldsymbol{x}\in\mathbb{R}^d$, define
$|\boldsymbol{i}|=\sum_{k=1}^di_k$, $\boldsymbol{i}!=\prod_{k=1}^di_k$ and  $\boldsymbol{x}^{\boldsymbol{i}}=\prod_{k=1}^dx_k^{i_k}$. For $\boldsymbol{r}=(r_1,\dotsc,r_d)^T\in\mathbb{N}_0^d$, let $D^{\boldsymbol{r}}=\partial^{|\boldsymbol{r}|}/\partial x_1^{r_1}\dotsm\partial x_d^{r_d}$ be the mixed partial derivative operator. If $\boldsymbol{r}=\boldsymbol{0}$, we interpret $D^{\boldsymbol{0}}f\equiv f$. If $\boldsymbol{r}=\boldsymbol{e}_k$, where $\boldsymbol{e}_k=(0,\dotsc,0,1,0,\dotsc,0)^T$ with $1$ in the $k$th position and zero elsewhere, we write $D^{\boldsymbol{e}_k}$ as $D_k$. We denote $\nabla f(\boldsymbol{x})=(D_1f(\boldsymbol{x}),\dotsc,D_df(\boldsymbol{x}))^T$ to be the gradient of $f$ at $\boldsymbol{x}$. If $f$ is twice differentiable,  $\boldsymbol{H}f(\boldsymbol{x}_0)$ stands for the Hessian matrix of $f$ at $\boldsymbol{x}_0$.

For $\boldsymbol{\alpha}=(\alpha_1,\dotsc,\alpha_d)^T\in\mathbb{N}^d$, let us denote $\alpha^{*}$ to be the harmonic mean, i.e., $(\alpha^{*})^{-1}=d^{-1}\sum_{k=1}^d\alpha_k^{-1}$. We define the anisotropic H\"{o}lder's norm $\|f\|_{\boldsymbol{\alpha},\infty}$ as
\begin{align}\label{eq:aninorm}
\max\left\{\|D^{\boldsymbol{r}}f\|_\infty+\sum_{k=1}^d\|D^{(\alpha_k-r_k)\boldsymbol{e}_k}D^{\boldsymbol{r}}f\|_\infty:\boldsymbol{r}\in\mathbb{N}_0^d,\quad\sum_{k=1}^d(r_k/\alpha_k)<1\right\}.
\end{align}
The constraint $\sum_{k=1}^d(r_k/\alpha_k)<1$ is a technical condition and is imposed so that contraction rates for $f$ and its derivatives will decrease to $0$ as $n\to\infty$.
\begin{definition}
The anisotropic H\"{o}lder space of order $\boldsymbol{\alpha}=(\alpha_1,\dotsc,\alpha_d)^T\in\mathbb{N}^d$, denoted as $\mathcal{H}^{\boldsymbol{\alpha}}([0,1]^d)$, consists of functions $f:[0,1]^d\rightarrow\mathbb{R}$ such that $\|f\|_{\boldsymbol{\alpha},\infty}<\infty$, and for some constant $C>0$ with any $\boldsymbol{x},\boldsymbol{x}_0\in(0,1)^d$,
\begin{align}\label{eq:isoholder}
|D^{\boldsymbol{r}}f(\boldsymbol{x})-D^{\boldsymbol{r}}T_{\boldsymbol{x}_0}f(\boldsymbol{x})|\leq C\sum_{k=1}^d|x_k-x_{0k}|^{\alpha_k-r_k},
\end{align}
where $\boldsymbol{r}\in\mathbb{N}_0^d$ and $\sum_{k=1}^d(r_k/\alpha_k)<1$. Here $T_{\boldsymbol{x}_0}f(\boldsymbol{x})=\sum_{\boldsymbol{i}\leq \boldsymbol{m}_{\boldsymbol{\alpha}}}D^{\boldsymbol{i}}f(\boldsymbol{x}_0)(\boldsymbol{x}-\boldsymbol{x}_0)^{\boldsymbol{i}}/\boldsymbol{i}!$ is the tensor Taylor polynomial of order $\boldsymbol{m}_{\boldsymbol{\alpha}}:=(\alpha_1-1, \alpha_2-1, \dotsc, \alpha_d-1)^T$ by expanding $f$ around $\boldsymbol{x}_0$.
\end{definition}

To study the frequentist properties of the posterior distribution, we assume the existence of a true regression function $f_0$ such that it satisfies the following three assumptions. In what follows, let  $\mathcal{B}(\boldsymbol{x},r)=\{\boldsymbol{y}: \|\boldsymbol{y}-\boldsymbol{x}\|\leq r \}$ be a $\ell_2$-ball of radius $r$ centered at $\bm{x}$.
\begin{enumerate}
	\item Under the true distribution $P_0$, $Y_i=f_0(\boldsymbol{X}_i)+\varepsilon_i$, such that $\varepsilon_i$ are i.i.d.~Gaussian with mean $0$ and variance $\sigma_0^2>0$ for $i=1,\dotsc,n$.
\item $f_0\in \mathcal{H}^{\boldsymbol{\alpha}}([0,1]^d)$ for $\alpha_k>2, k=1,\dotsc,d$, and attains its maximum $M_0$ at a unique point $\boldsymbol{\mu}_0$ in $(0,1)^d$ which is well-separated: for any constant $\tau_1>0$, there exists $\delta>0$ such that $f_0(\boldsymbol{\mu}_0)\geq f_0(\boldsymbol{x})+\delta$ for all $\boldsymbol{x}\notin\mathcal{B}(\boldsymbol{\mu}_0,\tau_1)$.
\item For any $0<\tau\leq\tau_1$, there exists $\lambda_0>0$ such that $\lambda_\text{max}\{\boldsymbol{H}f_0(\boldsymbol{x})\}<-\lambda_0$ for all $\boldsymbol{x}\in\mathcal{B}(\boldsymbol{\mu}_0,\tau)$.
\end{enumerate}

Assumption 1 states the true regression model for \eqref{eq:mainprob}. The well-separation property of Assumption 2 ensures that only points $\boldsymbol{x}$ that are near $\boldsymbol{\mu}_0$ will give values $f(\boldsymbol{x})$ that are close to the true maximum $M$. This property is needed to establish posterior consistency for $\boldsymbol{\mu}$ as we shall see in Theorem \ref{th:murate} below. Assumption 3 says that the Hessian of $f_0$ is locally negative definite around $\boldsymbol{\mu}_0$. Observe that Assumptions 2 and 3 imply $\nabla f_0(\boldsymbol{\mu}_0)=\boldsymbol{0}$ and the Hessian $\boldsymbol{H}f_0(\boldsymbol{\mu}_0)$ is symmetric and negative definite. Moreover, $\boldsymbol{H}f_0(\boldsymbol{x})$ is continuous in $\boldsymbol{x}$. If $\alpha_k=2$, then we need to make an extra assumption that the second partial derivatives of $f_0$ are continuous; if not, the Hessian may not be symmetric and its eigenvalues may not be real.

For $x_k\in[0,1]$, let $B_{j_k,q_k}(x_k)$ be the $k$th component B-spline of fixed order $q_k\geq\alpha_k$, with knots $0=t_{k,0}<t_{k,1}<\dotsb<t_{k,N_k}<t_{k,N_{k+1}}=1$, such that $J_k=q_k+N_k$. Assume that the set of knots in each direction is quasi-uniform, i.e., $\max_{1\leq l\leq N_k}(t_{k,l}-t_{k,l-1})\asymp\min_{1\leq l\leq N_k}(t_{k,l}-t_{k,l-1})$. Examples include uniform and nested uniform partitions (cf. Examples 6.6 and 6.7 of \citet{lschumaker}), and we can always choose a subset of knots from any given knot sequence to form a quasi-uniform sequence (cf. Lemma 6.17 of \citet{lschumaker}).

For fixed design points $\boldsymbol{X}_i=(X_{i1},\dotsc,X_{id})^T$ with $i=1,\dotsc,n$, assume that there is a cumulative distribution function $G$, with positive and continuous density $g$ on $[0,1]^d$ such that
\begin{align}\label{assump:cdf}
\sup_{\boldsymbol{x}\in[0,1]^d}|G_n(\boldsymbol{x})-G(\boldsymbol{x})|=o\left(\prod_{k=1}^dN_k^{-1}\right),
\end{align}
where $G_n(\boldsymbol{x})=n^{-1}\sum_{i=1}^n\mathbbm{1}_{\{\boldsymbol{X}_i\in [\boldsymbol{0},\boldsymbol{x}]\}}$ is the empirical distribution of $\{\boldsymbol{X}_i,i=1,\dotsc,n\}$, with $\mathbbm{1}_U$ the indicator function on $U$. The condition holds for the discrete uniform design with $G$ the uniform distribution when $N_k\lesssim n^{\alpha^{*}/\{\alpha_k(2\alpha^{*}+d)\}}$ for $k=1,\dotsc,d$. If $\boldsymbol{X}_i\overset{\mathrm{i.i.d.}}{\sim}G$ with a continuous density on $[0,1]^d$, then \eqref{assump:cdf} holds with probability tending to one if $N_k\lesssim n^{\alpha^{*}/\{\alpha_k(2\alpha^{*}+d)\}}$ for $k=1,\dotsc,d$, and $\alpha^{*}>d/2$ by Donsker's theorem. In this paper, we shall prove results on posterior contraction rates and credible sets based on fixed design points. These results will translate to the random case by conditioning on the predictor variables.

\section{B-splines tensor product, Gaussian prior and posterior}
\label{sec:tprior}
In the model $Y_i=f(\boldsymbol{x}_i)+\varepsilon_i$, $i=1,\ldots,n$, we put a finite random series prior on $f$ based on tensor-product B-splines, i.e., $f(\boldsymbol{x})=\sum_{j_1=1}^{J_1}\cdots\sum_{j_d=1}^{J_d}\theta_{j_1,\dotsc,j_d}\prod_{k=1}^dB_{j_k,q_k}(x_k):=\boldsymbol{b}_{\boldsymbol{J},\boldsymbol{q}}(\boldsymbol{x})^T\boldsymbol{\theta}$, where $\boldsymbol{b}_{\boldsymbol{J},\boldsymbol{q}}(\boldsymbol{x})=\{\prod_{k=1}^dB_{j_k,q_k}(x_k):1\leq j_k\leq J_k, k=1,\dotsc,d$\} is a collection of $J=\prod_{k=1}^dJ_k$ tensor-product B-splines, and $\boldsymbol{\theta}=\{\theta_{j_1,\dotsc,j_d}:1\leq j_k\leq J_k,k=1,\dotsc,d\}$ are the basis coefficients. Note that $\boldsymbol{b}_{\boldsymbol{J},\boldsymbol{q}}(\boldsymbol{x})$ and $\boldsymbol{\theta}$ are vectors indexed by $d$-dimensional indices and the entries are ordered lexicographically. Then by repeatedly applying equations (15) and (16) of Chapter X from \citet{deBoor} to each direction $k=1,\dotsc,d$, the $\boldsymbol{r}=(r_1,\dotsc,r_d)^T$ mixed partial derivative of $f$ is given by
\begin{equation}\label{eq:fprior}
D^{\boldsymbol{r}}f(\boldsymbol{x})=\sum_{j_1=1}^{J_1}\dotsi\sum_{j_d=1}^{J_d}\theta_{j_1,\dotsc,j_d}\prod_{k=1}^d\frac{\partial^{r_k}}{\partial x_k^{r_k}}B_{j_k,q_k}(x_k)=\boldsymbol{b}_{\boldsymbol{J},\boldsymbol{q}-\boldsymbol{r}}(\boldsymbol{x})^T\boldsymbol{W}_{\boldsymbol{r}}\boldsymbol{\theta},
\end{equation}
where $\boldsymbol{W}_{\boldsymbol{r}}$ is a $\prod_{k=1}^d(J_k-r_k)\times\prod_{k=1}^dJ_k$ matrix whose entries consist of coefficients associated with applying the finite difference operator  iteratively on $\boldsymbol{\theta}$ (for exact expressions, see (8.1)--(8.4) of \citet{yoo2016}). We represent the model in \eqref{eq:mainprob} by an $n$-variate normal distribution $\boldsymbol{Y}|(\boldsymbol{X},\boldsymbol{\theta},\sigma)\sim\mathrm{N}_n(\boldsymbol{B\theta},\sigma^2\boldsymbol{I}_n)$, where $\boldsymbol{B}=(\boldsymbol{b}_{\boldsymbol{J},\boldsymbol{q}}(\boldsymbol{X}_1)^T,\dotsc\boldsymbol{b}_{\boldsymbol{J},\boldsymbol{q}}(\boldsymbol{X}_n)^T)^T$ is the B-splines basis matrix. Note that we can index the rows and columns of $\boldsymbol{B}^T\boldsymbol{B}$ by multi-dimensional indices, such that for $\boldsymbol{u}=(u_1,\dotsc,u_d)^T$ and $\boldsymbol{v}=(v_1,\dotsc,v_d)^T$, we write $(\boldsymbol{B}^T\boldsymbol{B})_{\boldsymbol{u},\boldsymbol{v}}=\sum_{i=1}^n\prod_{k=1}^dB_{u_k,q_k}(X_{ik})B_{v_k,q_k}(X_{ik})$.

We consider deterministic $\boldsymbol{J}=(J_1,\dotsc,J_d)^T$ number of basis functions depending on $n, d$ and $\boldsymbol{\alpha}$. On the basis coefficients, we endow the prior $\boldsymbol{\theta}|\sigma^2\sim\mathrm{N}_J(\boldsymbol{\eta},\sigma^2\boldsymbol{\Omega})$. We choose the prior mean such that $\|\boldsymbol{\eta}\|_\infty<\infty$ and the $J\times J$ prior covariance matrix $c_1\boldsymbol{I}_J\leq\boldsymbol{\Omega}\leq c_2\boldsymbol{I}_J$ for some constants $0<c_1\leq c_2<\infty$. We will use the same multi-dimensional indexing convention of $\boldsymbol{B}^T\boldsymbol{B}$ on $\boldsymbol{\Omega}$, and further assume that $\boldsymbol{\Omega}^{-1}$ is $\boldsymbol{h}=(h_1,\dotsc,h_d)^T$-banded, in the sense that $(\boldsymbol{\Omega}^{-1})_{\boldsymbol{u},\boldsymbol{v}}=0$ if $|u_k-v_k|>h_k$ for some $k=1,\dotsc,d$.

By direct calculations, $D^{\boldsymbol{r}}f|(\boldsymbol{Y},\sigma)\sim\mathrm{GP}(\boldsymbol{A}_{\boldsymbol{r}}\boldsymbol{Y}+\boldsymbol{c}_{\boldsymbol{r}}\boldsymbol{\eta},\sigma^2\Sigma_{\boldsymbol{r}})$, where $\boldsymbol{A}_{\boldsymbol{r}}$, $\boldsymbol{c}_{\boldsymbol{r}}$ and the covariance kernel are defined for $\boldsymbol{x},\boldsymbol{y}\in(0,1)^d$ by
\begin{align}
\boldsymbol{A}_{\boldsymbol{r}}(\boldsymbol{x})&=\boldsymbol{b}_{\boldsymbol{J},\boldsymbol{q}-\boldsymbol{r}}(\boldsymbol{x})^T\boldsymbol{W}_{\boldsymbol{r}}\left(\boldsymbol{B}^T\boldsymbol{B}+\boldsymbol{\Omega}^{-1}\right)^{-1}\boldsymbol{B}^T,\label{eq:tAr}\\
\boldsymbol{c}_{\boldsymbol{r}}(\boldsymbol{x})&=\boldsymbol{b}_{\boldsymbol{J},\boldsymbol{q}-\boldsymbol{r}}(\boldsymbol{x})^T\boldsymbol{W}_{\boldsymbol{r}}\left(\boldsymbol{B}^T\boldsymbol{B}+\boldsymbol{\Omega}^{-1}\right)^{-1}\boldsymbol{\Omega}^{-1},\label{eq:tcr}\\
\Sigma_{\boldsymbol{r}}(\boldsymbol{x},\boldsymbol{y})&=\boldsymbol{b}_{\boldsymbol{J},\boldsymbol{q}-\boldsymbol{r}}(\boldsymbol{x})^T\boldsymbol{W}_{\boldsymbol{r}}
\left(\boldsymbol{B}^T\boldsymbol{B}+\boldsymbol{\Omega}^{-1}\right)^{-1}\boldsymbol{W}_{\boldsymbol{r}}^T\boldsymbol{b}_{\boldsymbol{J},\boldsymbol{q}-\boldsymbol{r}}
(\boldsymbol{y}).\label{eq:tsigmar}
\end{align}
For $\sigma$, we either take an empirical Bayes approach by maximizing the marginal likelihood obtained from  $\boldsymbol{Y}|\sigma\sim\mathrm{N}_n[\boldsymbol{B\eta},\sigma^2(\boldsymbol{B\Omega B}^T+\boldsymbol{I}_n)]$, or use a hierarchical Bayes approach. In the former approach, the empirical Bayes estimate given by $\widetilde{\sigma}_n^2=(\boldsymbol{Y}-\boldsymbol{B\eta})^T(\boldsymbol{B\Omega B}^T+\boldsymbol{I}_n)^{-1}(\boldsymbol{Y}-\boldsymbol{B\eta})/n$ is plugged into the expression of the conditional posterior process $D^{\boldsymbol{r}}f|(\boldsymbol{Y},\sigma)$; while in the latter approach, we further endow $\sigma$ with a continuous and positive prior density. For example, we can use a conjugate inverse-gamma (IG) prior $\sigma^2\sim\mathrm{IG}(\beta_1/2,\beta_2/2)$, where $\beta_1>4$ and $\beta_2>0$ are hyper-parameters, to get $\sigma^2|\boldsymbol{Y}\sim \mathrm{IG}\left[(\beta_1+n)/2,(\beta_2+n\widetilde{\sigma}_n^2)/2\right]$. The posterior for the function $f$ itself can be recovered as a special case $\boldsymbol{r}=\boldsymbol{0}$ by setting  $\boldsymbol{W}_{\boldsymbol{0}}=\boldsymbol{I}$.

\begin{remark}\rm
Finite random series based priors have been found to be very convenient to use in Bayesian nonparametrics because their theoretical and computational aspects can be dealt with very simply within the framework of Euclidean spaces. Even though they have simpler structure, they can achieve contraction rates on par with Gaussian process priors. Detailed discussions are given in \citet{shen2015} and the book \citet{ghosal2017}. More specifically, we used tensor-product B-splines with Gaussian coefficients because it enables us to lower bound the variation of the posterior around its center which is essential to get results on coverage of credible sets (to be discussed in Section \ref{sec:cred}).  The structure of the (Gaussian) prior is also helpful for bounding contraction rates and computing the posterior, and this allows us to obtain sharp rates and stronger statements regarding coverage probabilities (e.g.~without additional logarithmic factors in the radius), whether it is in the single or two-stage settings. Moreover, B-splines are compactly supported and we have fast recursive algorithm to compute them (see Section 5 of \citet{lschumaker}). We shall further discuss issues on adaptation in Section \ref{sec:conclude} where $\boldsymbol{\alpha}$ is not assumed to be known.
\end{remark}

We need to ensure that the priors discussed above for $f$ will yield a well-defined maximum point at every realization from its posterior. The following lemma assures this property.

\begin{lemma}
	\label{lem:uniquemu}
	If $f$ is given the tensor-product B-splines with normal coefficients prior, then $\boldsymbol{\mu}$ is unique for almost all sample paths of $f$ under the posterior distribution (empirical or hierarchical Bayes).
\end{lemma}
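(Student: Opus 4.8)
The plan is to reduce the statement to a Lebesgue-null property of the basis coefficients and then to exploit convexity. Writing $\boldsymbol{b}(\boldsymbol{x}):=\boldsymbol{b}_{\boldsymbol{J},\boldsymbol{q}}(\boldsymbol{x})$, every posterior draw is $f_{\boldsymbol{\theta}}(\boldsymbol{x})=\boldsymbol{b}(\boldsymbol{x})^T\boldsymbol{\theta}$, which is \emph{linear} in $\boldsymbol{\theta}$. Conditionally on $(\boldsymbol{Y},\sigma)$ the vector $\boldsymbol{\theta}$ is Gaussian with covariance $\sigma^2(\boldsymbol{B}^T\boldsymbol{B}+\boldsymbol{\Omega}^{-1})^{-1}$, which is nonsingular because $\boldsymbol{\Omega}^{-1}$ is positive definite; hence this conditional law is absolutely continuous on $\mathbb{R}^J$, and marginalizing over $\sigma$ (the plug-in $\widetilde{\sigma}_n$ in the empirical Bayes case, or the IG posterior in the hierarchical case) preserves absolute continuity. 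Thus it suffices to show that the ``bad set''
\begin{align*}
\mathcal{N}=\Big\{\boldsymbol{\theta}\in\mathbb{R}^J:\ \argmax_{\boldsymbol{x}\in[0,1]^d} \boldsymbol{b}(\boldsymbol{x})^T\boldsymbol{\theta}\ \text{contains at least two points}\Big\}
\end{align*}
has Lebesgue measure zero, since then the posterior assigns it probability zero and $\boldsymbol{\mu}$ is almost surely unique.

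The key step is to study the value function $M(\boldsymbol{\theta})=\max_{\boldsymbol{x}\in[0,1]^d}\boldsymbol{b}(\boldsymbol{x})^T\boldsymbol{\theta}$. As a pointwise maximum of the linear maps $\boldsymbol{\theta}\mapsto\boldsymbol{b}(\boldsymbol{x})^T\boldsymbol{\theta}$ over the compact set $[0,1]^d$, $M$ is finite and convex on $\mathbb{R}^J$, hence locally Lipschitz and, by Rademacher's theorem, differentiable outside a set $\mathcal{D}$ of Lebesgue measure zero. I would then invoke the Danskin-type formula for the subdifferential of a support function, $\partial M(\boldsymbol{\theta})=\overline{\mathrm{conv}}\{\boldsymbol{b}(\boldsymbol{x}):\boldsymbol{x}\in\argmax_{[0,1]^d}\boldsymbol{b}(\cdot)^T\boldsymbol{\theta}\}$, which is valid because $\boldsymbol{x}\mapsto\boldsymbol{b}(\boldsymbol{x})$ is continuous and $[0,1]^d$ is compact. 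For a convex function, differentiability at a point is equivalent to its subdifferential being a singleton; hence at any $\boldsymbol{\theta}\notin\mathcal{D}$ the convex hull above collapses to a single point, forcing $\boldsymbol{b}(\boldsymbol{x})$ to be the \emph{same} vector for every maximizer $\boldsymbol{x}$.

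It remains to upgrade ``all maximizers share the same basis vector'' to ``there is a single maximizer,'' which reduces to injectivity of the evaluation map $\boldsymbol{x}\mapsto\boldsymbol{b}(\boldsymbol{x})$ on $[0,1]^d$. Here I would use the linear-reproduction property of B-splines: since $q_k\geq\alpha_k>2$, each univariate family reproduces linear polynomials through its Greville abscissae $\xi^{*}_{k,j}$, giving $\sum_{j}\xi^{*}_{k,j}B_{j,q_k}(x_k)=x_k$, while $\sum_j B_{j,q_k}(x_k)=1$. Applying the functional with weights $\xi^{*}_{k,j_k}$ (in the $k$th index only) to $\boldsymbol{b}(\boldsymbol{x})$ and using the tensor-product partition of unity in the remaining indices recovers the coordinate $x_k$; hence $\boldsymbol{b}(\boldsymbol{x}_1)=\boldsymbol{b}(\boldsymbol{x}_2)$ implies $\boldsymbol{x}_1=\boldsymbol{x}_2$. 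Consequently every $\boldsymbol{\theta}\notin\mathcal{D}$ has a unique maximizer, so $\mathcal{N}\subseteq\mathcal{D}$, and therefore $\mathcal{N}$ is Lebesgue-null and the lemma follows.

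I expect the main obstacle to be the rigorous interface between the subdifferential formula and Rademacher's theorem rather than any hard estimate: one must verify that non-uniqueness of the maximizer genuinely forces non-differentiability of $M$, which is exactly what the collapse of $\overline{\mathrm{conv}}\{\boldsymbol{b}(\boldsymbol{x})\}$ to a singleton, together with injectivity of $\boldsymbol{x}\mapsto\boldsymbol{b}(\boldsymbol{x})$, delivers. A secondary point to check carefully is that absolute continuity of the posterior of $\boldsymbol{\theta}$ holds uniformly across the empirical and hierarchical Bayes specifications, the latter being a mixture of nonsingular Gaussians over the IG posterior of $\sigma$. If one preferred to avoid convex analysis, an alternative but more laborious route is to partition $[0,1]^d$ into the finitely many polynomial cells induced by the knots and show that the coincidence of two global maximal values is a non-trivial polynomial condition on $\boldsymbol{\theta}$, hence cuts out a measure-zero algebraic set; the convexity argument is preferable because it treats interior, face, and boundary maximizers uniformly.
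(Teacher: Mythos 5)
Your proof is correct, but it takes a genuinely different route from the paper's. The paper works at the level of the function-valued posterior: it observes that, given $(\boldsymbol{Y},\sigma)$, $f$ is a Gaussian process with continuous sample paths (its RKHS being the spline space), verifies the non-degeneracy condition $\mathrm{Var}(f(\boldsymbol{t})-f(\boldsymbol{s})|\boldsymbol{Y},\sigma)=0\Rightarrow \boldsymbol{t}=\boldsymbol{s}$, and then invokes a ready-made lemma on almost-sure uniqueness of the argmax of such Gaussian processes (Lemma 2.6 of the cited reference), finishing the hierarchical case by conditioning on $\sigma$ and mixing. You instead work in the finite-dimensional coefficient space: absolute continuity of the posterior of $\boldsymbol{\theta}$ (nonsingular Gaussian conditionally on $\sigma$, preserved under mixing over the IG posterior), convexity and a.e.\ differentiability of $M(\boldsymbol{\theta})=\max_{\boldsymbol{x}}\boldsymbol{b}(\boldsymbol{x})^T\boldsymbol{\theta}$, the Danskin/support-function formula $\partial M(\boldsymbol{\theta})=\mathrm{conv}\{\boldsymbol{b}(\boldsymbol{x}):\boldsymbol{x}\in\argmax\}$, and the equivalence of differentiability with a singleton subdifferential. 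Both arguments ultimately hinge on the same geometric ingredient — injectivity of $\boldsymbol{x}\mapsto\boldsymbol{b}_{\boldsymbol{J},\boldsymbol{q}}(\boldsymbol{x})$ — but your derivation of it via partition of unity plus linear reproduction through the Greville abscissae is actually cleaner and more explicit than the paper's brief remark that B-splines of order exceeding two "are not step functions." What each approach buys: the paper's GP argument extends verbatim to infinite-dimensional Gaussian priors where no finite coefficient representation exists, whereas your convex-analytic argument never uses Gaussianity at all — only that the coefficient law has a Lebesgue density — so it would cover arbitrary absolutely continuous (possibly non-Gaussian) priors on $\boldsymbol{\theta}$, and it is self-contained, requiring no external lemma on Gaussian process maxima.
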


In this paper, we simply use $\Pi(\cdot|\boldsymbol{Y})$ to denote either the empirical or hierarchical posteriors, we do not distinguish between these two cases since both approaches yield the same rates.

\section{Main results for single stage setting}\label{sec:tsupnormf}
\subsection{Posterior contraction rates}

The posterior distributions of $\boldsymbol{\mu}$ and $M$ can be induced from the posterior of $f$ through the argmax and maximum operators. However since these operators are nonlinear and non-differentiable, we take an indirect approach by relating the estimation errors of $\boldsymbol{\mu}$ and $M$ to the sup-norm errors in estimating $f$ and its mixed partial derivatives. By this strategy, results for posterior contraction rates in the supremum norm can be used to induce the corresponding rates for $\boldsymbol{\mu}$ and $M$ as the following theorem shows.

\begin{theorem}
\label{th:murate}
Let $J_k\asymp(n/\log{n})^{\alpha^{*}/\{\alpha_k(2\alpha^{*}+d)\}}$, $k=1,\dotsc,d$ and assume that Assumptions 1, 2 and 3 hold. Consider the empirical Bayes approach by plugging-in $\widetilde{\sigma}_n$ for $\sigma$, or the hierarchical Bayes approach by equipping $\sigma$ with some continuous and positive prior density, then
\begin{eqnarray}
\label{eq:muinequality}
\|\boldsymbol{\mu}-\boldsymbol{\mu}_0\|&\leq& \frac{\sqrt{d}}{\lambda_0}\max_{1\leq k\leq d}\|D_kf-D_kf_0\|_\infty,\\
\label{eq:maxinequality}
|M-M_0| &\leq & \|f-f_0\|_\infty.
\end{eqnarray}
Therefore for any $m_n\rightarrow\infty$ and uniformly in $\|f_0\|_{\boldsymbol{\alpha},\infty}\leq R,R>0$,
\begin{align}
&\mathrm{E}_0\Pi\left(\|\boldsymbol{\mu}-\boldsymbol{\mu}_0\|>m_n(\log{n}/n)^{\alpha^{*}\{1-(\min_{1\leq k\leq d}\alpha_k)^{-1}\}/(2\alpha^{*}+d)}\middle|\boldsymbol{Y}\right)\rightarrow0,
\label{eq:rate mu}\\
&\mathrm{E}_0\Pi\left(|M-M_0|>m_n(\log{n}/n)^{\alpha^{*}/(2\alpha^{*}+d)}\middle|\boldsymbol{Y}\right)\rightarrow0.
\label{eq:rate M}
\end{align}
\end{theorem}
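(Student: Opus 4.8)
The plan is to first establish the two pointwise bounds \eqref{eq:muinequality}--\eqref{eq:maxinequality}, the first only on the event that the posterior mode $\boldsymbol{\mu}$ is close to $\boldsymbol{\mu}_0$, and then to feed in the known anisotropic sup-norm posterior contraction rates for $f$ and its first partial derivatives to obtain \eqref{eq:rate mu}--\eqref{eq:rate M}. The maximum bound is immediate: for every $\bx$ we have $f(\bx)\le f_0(\bx)+\|f-f_0\|_\infty$, so $M=\sup_{\bx}f(\bx)\le M_0+\|f-f_0\|_\infty$, and interchanging the roles of $f$ and $f_0$ gives $|M-M_0|\le\|f-f_0\|_\infty$. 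Combined with the sup-norm rate $\|f-f_0\|_\infty=O_{\mathrm{P}}((\log n/n)^{\alpha^{*}/(2\alpha^{*}+d)})$ under the posterior, uniformly in $\|f_0\|_{\boldsymbol{\alpha},\infty}\le R$, this gives \eqref{eq:rate M} directly.

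For the mode I would first localize $\boldsymbol{\mu}$ near $\boldsymbol{\mu}_0$ using the well-separation of Assumption 2. Since $M=\sup f\ge f(\boldsymbol{\mu}_0)\ge M_0-\|f-f_0\|_\infty$ while $M=f(\boldsymbol{\mu})\le f_0(\boldsymbol{\mu})+\|f-f_0\|_\infty$, we get $f_0(\boldsymbol{\mu})\ge M_0-2\|f-f_0\|_\infty$. Fix $\tau\le\tau_1$ small enough that $\mathcal{B}(\boldsymbol{\mu}_0,\tau)\subset(0,1)^d$ and let $\delta=\delta(\tau)$ be the corresponding separation constant; then on the event $\{\|f-f_0\|_\infty<\delta/2\}$ the contrapositive of well-separation forces $\boldsymbol{\mu}\in\mathcal{B}(\boldsymbol{\mu}_0,\tau)$. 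Because the sup-norm rate tends to zero, this event has posterior probability tending to one in $P_0$-probability, so it suffices to argue on it.

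On that event $\boldsymbol{\mu}$ is an interior maximizer, hence $\nabla f(\boldsymbol{\mu})=\boldsymbol{0}$, with $\boldsymbol{\mu}$ well defined by Lemma~\ref{lem:uniquemu}. Writing $\nabla f_0(\boldsymbol{\mu})=\nabla f_0(\boldsymbol{\mu})-\nabla f(\boldsymbol{\mu})$ and using $\nabla f_0(\boldsymbol{\mu}_0)=\boldsymbol{0}$ together with the integral mean-value identity
\[
\nabla f_0(\boldsymbol{\mu})=\Big(\int_0^1 \boldsymbol{H} f_0\big(\boldsymbol{\mu}_0+t(\boldsymbol{\mu}-\boldsymbol{\mu}_0)\big)\,dt\Big)(\boldsymbol{\mu}-\boldsymbol{\mu}_0)=:\bar{\boldsymbol{H}}(\boldsymbol{\mu}-\boldsymbol{\mu}_0),
\]
the key point is that the whole segment lies in $\mathcal{B}(\boldsymbol{\mu}_0,\tau)$, so Assumption 3 gives $\boldsymbol{v}^T\boldsymbol{H}f_0(\cdot)\boldsymbol{v}<-\lambda_0\|\boldsymbol{v}\|^2$ pointwise; averaging preserves the inequality, whence $\lambda_{\max}(\bar{\boldsymbol{H}})<-\lambda_0$ and $\|\bar{\boldsymbol{H}}^{-1}\|_{(2,2)}\le\lambda_0^{-1}$. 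Inverting and using $\nabla f(\boldsymbol{\mu})=\boldsymbol{0}$ then yields $\|\boldsymbol{\mu}-\boldsymbol{\mu}_0\|\le\lambda_0^{-1}\|\nabla f(\boldsymbol{\mu})-\nabla f_0(\boldsymbol{\mu})\|\le(\sqrt{d}/\lambda_0)\max_{1\le k\le d}\|D_kf-D_kf_0\|_\infty$, which is \eqref{eq:muinequality}.

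Finally I would import the anisotropic sup-norm contraction rate for the first derivatives, $\|D_kf-D_kf_0\|_\infty=O_{\mathrm{P}}((\log n/n)^{\alpha^{*}(1-\alpha_k^{-1})/(2\alpha^{*}+d)})$ under the posterior, uniformly over $\|f_0\|_{\boldsymbol{\alpha},\infty}\le R$. Decomposing $\mathrm{E}_0\Pi(\|\boldsymbol{\mu}-\boldsymbol{\mu}_0\|>m_n\rho_n\mid\boldsymbol{Y})$ along the event $\{\boldsymbol{\mu}\in\mathcal{B}(\boldsymbol{\mu}_0,\tau)\}$ and its complement, the bound \eqref{eq:muinequality} converts the first piece into the posterior probability that $\max_k\|D_kf-D_kf_0\|_\infty$ exceeds a multiple of $m_n\rho_n$, and taking the maximum over $k$ selects the slowest exponent, driven by $\min_k\alpha_k$, producing the rate in \eqref{eq:rate mu}; the complementary piece is controlled by $\{\|f-f_0\|_\infty\ge\delta/2\}$, whose expected posterior mass vanishes by the sup-norm rate. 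I expect the main obstacle to be precisely this localization: one must pick $\tau$ small enough that Assumption 3 governs the entire segment joining $\boldsymbol{\mu}$ to $\boldsymbol{\mu}_0$ while simultaneously guaranteeing, through well-separation, that $\boldsymbol{\mu}$ lands inside $\mathcal{B}(\boldsymbol{\mu}_0,\tau)$ with posterior probability tending to one, so that these two requirements are reconciled before the deterministic inequality \eqref{eq:muinequality} may be invoked.
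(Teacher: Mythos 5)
Your proof is correct and follows the same overall strategy as the paper's: establish \eqref{eq:maxinequality} by a triangle-inequality argument, localize $\boldsymbol{\mu}$ inside a small ball around $\boldsymbol{\mu}_0$ using well-separation together with sup-norm posterior consistency of $f$, convert the gradient deviation into a mode deviation through the negative-definiteness of the Hessian (Assumption 3), and then import the anisotropic sup-norm contraction rates for $f$ and $D_kf$ (Theorem \ref{th:frate}) to obtain \eqref{eq:rate mu}--\eqref{eq:rate M}. The one substantive difference is the mean-value step: the paper writes $\nabla f_0(\boldsymbol{\mu})=\nabla f_0(\boldsymbol{\mu}_0)+\boldsymbol{H}f_0(\boldsymbol{\mu}^{*})(\boldsymbol{\mu}-\boldsymbol{\mu}_0)$ with a single intermediate point $\boldsymbol{\mu}^{*}$, which for a vector-valued map such as $\nabla f_0$ is not literally valid (the mean value theorem holds only componentwise, with possibly different intermediate points); your integral form $\bar{\boldsymbol{H}}=\int_0^1\boldsymbol{H}f_0(\boldsymbol{\mu}_0+t(\boldsymbol{\mu}-\boldsymbol{\mu}_0))\,dt$ is the rigorous fix, and since Assumption 3 holds pointwise along the segment, averaging preserves $\lambda_{\max}(\bar{\boldsymbol{H}})\le-\lambda_0$ and yields the identical bound \eqref{eq:muinequality}. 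Two further minor points where you are more careful than the paper: you note explicitly that $\mathcal{B}(\boldsymbol{\mu}_0,\tau)\subset(0,1)^d$, so that $\boldsymbol{\mu}$ is an interior maximizer and $\nabla f(\boldsymbol{\mu})=\boldsymbol{0}$ is justified, and you carry the complementary event $\{\|f-f_0\|_\infty\ge\delta/2\}$ explicitly through the final decomposition, whereas the paper's closing display absorbs it silently. Both routes deliver the same rates; yours is marginally tighter on rigor, the paper's marginally shorter.
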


Clearly, a consequence of the result above is that the posterior mean $\mathrm{E}(\boldsymbol{\mu}|\boldsymbol{Y})$ converges to $\bm{\mu}_0$ in $\ell_2$-norm at the same rate given in \eqref{eq:rate mu}, and the same can be said for $\mathrm{E}(M|\boldsymbol{Y})$. Given the absence of minimax results on anisotropic mode estimation (to the best of our knowledge), it is instructive to ask whether the inequalities used above are sharp and the contraction rates obtained are optimal? The following lower bound corollary shows that these results are sharp up to logarithmic factors.

\begin{corollary}[Lower Bounds]\label{cor:lower}
In addition to Assumptions 1,2 and 3, let us now make an extra assumption that for any $0<\tau\leq\tau_1$, we have
\begin{align}\label{assump:lower}
\inf_{\boldsymbol{x}:\|\boldsymbol{x}-\boldsymbol{\mu}_0\|\leq\tau}\lambda_{\mathrm{min}}\{\boldsymbol{H}f_0(\boldsymbol{x})\}>-\lambda_1,
\end{align}
for some constants $\tau_1,\lambda_1>0$. Then for some small enough constant $h>0$, we have
\begin{align*}
&\mathrm{E}_0\Pi\left(\|\boldsymbol{\mu}-\boldsymbol{\mu}_0\|\geq hn^{-\alpha^{*}\{1-(\min_{1\leq k\leq d}\alpha_k)^{-1}\}/(2\alpha^{*}+d)}\middle|\boldsymbol{Y}\right)\rightarrow1,\\
&\mathrm{E}_0\Pi\left(|M-M_0|\geq hn^{-\alpha^{*}/(2\alpha^{*}+d)}\middle|\boldsymbol{Y}\right)\rightarrow1.
\end{align*}
\end{corollary}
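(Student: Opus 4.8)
The plan is to reverse the two inequalities \eqref{eq:muinequality} and \eqref{eq:maxinequality} used in Theorem \ref{th:murate}, reducing each lower bound to a statement about a single pointwise functional of $f$ evaluated at $\boldsymbol{\mu}_0$, and then to show that the dominant term in that functional is the deterministic bias of the spline posterior mean, which is of the claimed polynomial order. I would first record the heuristic that pins down the mechanism. With $J_k\asymp(n/\log n)^{\alpha^{*}/\{\alpha_k(2\alpha^{*}+d)\}}$ we have $J=\prod_k J_k\asymp(n/\log n)^{d/(2\alpha^{*}+d)}$, so the pointwise posterior standard deviation of $f(\boldsymbol{\mu}_0)$ is of order $\sqrt{J/n}$ and that of $D_kf(\boldsymbol{\mu}_0)$ is of order $J_k\sqrt{J/n}$; a direct computation shows both equal the respective target rates multiplied by a negative power of $\log n$, hence are strictly smaller than the target rates. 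Consequently the lower bound cannot come from the posterior spread and must be driven by the bias of the posterior mean at $\boldsymbol{\mu}_0$.

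For the maximum, a second order Taylor expansion of $f$ about its own maximizer $\boldsymbol{\mu}$ (where $\nabla f(\boldsymbol{\mu})=\boldsymbol{0}$ almost surely by Lemma \ref{lem:uniquemu}, the maximizer lying in the interior by posterior concentration) gives $M-M_0=(f-f_0)(\boldsymbol{\mu}_0)+R$ with $|R|\lesssim\|\boldsymbol{H}f\|_{(2,2)}\|\boldsymbol{\mu}-\boldsymbol{\mu}_0\|^2$. Using \eqref{eq:rate mu} together with $\min_k\alpha_k>2$, one checks that $\|\boldsymbol{\mu}-\boldsymbol{\mu}_0\|^2$ is of smaller order than $n^{-\alpha^{*}/(2\alpha^{*}+d)}$, so $R$ is negligible and it suffices to bound $|(f-f_0)(\boldsymbol{\mu}_0)|$ from below. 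For the mode, I would expand $\boldsymbol{0}=\nabla f(\boldsymbol{\mu})=\nabla(f-f_0)(\boldsymbol{\mu}_0)+\bar{\boldsymbol{H}}(\boldsymbol{\mu}-\boldsymbol{\mu}_0)$, where $\bar{\boldsymbol{H}}=\int_0^1\boldsymbol{H}f(\boldsymbol{\mu}_0+t(\boldsymbol{\mu}-\boldsymbol{\mu}_0))\,dt$. On the concentration event, $\boldsymbol{H}f$ is uniformly close to $\boldsymbol{H}f_0$ near $\boldsymbol{\mu}_0$, so by Assumption 3 and the new bound \eqref{assump:lower} the matrix $\bar{\boldsymbol{H}}$ has operator norm at most about $\lambda_1$; inverting yields $\|\boldsymbol{\mu}-\boldsymbol{\mu}_0\|\geq\lambda_1^{-1}\|\nabla(f-f_0)(\boldsymbol{\mu}_0)\|\geq\lambda_1^{-1}|D_{k^{*}}(f-f_0)(\boldsymbol{\mu}_0)|$ with $k^{*}=\argmax_k\alpha_k^{-1}$. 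This is precisely the reverse of \eqref{eq:muinequality}, and is the only place where \eqref{assump:lower} enters.

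It then remains to prove, for the fixed interior point $\boldsymbol{\mu}_0$, that the posterior puts mass tending to one on $\{|(f-f_0)(\boldsymbol{\mu}_0)|\geq h\,n^{-\alpha^{*}/(2\alpha^{*}+d)}\}$ and on $\{|D_{k^{*}}(f-f_0)(\boldsymbol{\mu}_0)|\geq h\,n^{-\alpha^{*}(1-\alpha_{k^{*}}^{-1})/(2\alpha^{*}+d)}\}$. Conditionally on $\sigma$ each functional is Gaussian, with standard deviation of the order computed above and mean equal to the bias $m_n=\mathrm{E}(f(\boldsymbol{\mu}_0)\mid\boldsymbol{Y})-f_0(\boldsymbol{\mu}_0)$ (respectively its $D_{k^{*}}$ analogue); for the hierarchical prior one integrates over $\sigma$ using its concentration at $\sigma_0$. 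Since the standard deviation is smaller than the target rate, the argument runs opposite to the usual anti-concentration bound: I would show $|m_n|$ is bounded below by a multiple of the target rate, so that the entire Gaussian mass sits many standard deviations from $0$ and the required probability tends to one once $h$ is small. Decomposing $m_n$ into the prior shrinkage contribution, which is of order $J/n$ and hence negligible, and the spline approximation error at $\boldsymbol{\mu}_0$, the task reduces to a lower bound on the latter.

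The main obstacle is exactly this last step: establishing that the tensor-product B-spline approximation bias of $f_0$ at the single point $\boldsymbol{\mu}_0$ (and of $D^{\boldsymbol{e}_{k^{*}}}f_0$ there) is of the exact order $\max_k J_k^{-\alpha_k}\asymp(n/\log n)^{-\alpha^{*}/(2\alpha^{*}+d)}$, which already dominates $n^{-\alpha^{*}/(2\alpha^{*}+d)}$ by a logarithmic factor. This requires the exact anisotropic H\"older regularity of $f_0$ and the leading-order behaviour of the B-spline quasi-interpolant on the quasi-uniform knot mesh, and one must rule out an accidental vanishing of the leading bias term at that particular point; everything else (the Taylor remainders, the Hessian closeness, and the passage from the empirical to the hierarchical posterior) is controlled by the contraction statements already available from Theorem \ref{th:murate}.
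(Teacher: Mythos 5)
Your proposal follows essentially the same route as the paper's own proof. Both arguments reverse \eqref{eq:muinequality} and \eqref{eq:maxinequality} by using \eqref{assump:lower} together with Assumption 3 to bound the operator norm of the relevant Hessian by $\lambda_1$, both kill the quadratic remainder using the contraction rate \eqref{eq:rate mu} and $\min_{1\leq k\leq d}\alpha_k>2$, and both thereby reduce the corollary to a pointwise non-concentration statement at $\boldsymbol{\mu}_0$: that the posterior mass of $\{|D^{\boldsymbol{r}}f(\boldsymbol{\mu}_0)-D^{\boldsymbol{r}}f_0(\boldsymbol{\mu}_0)|\leq m_0\,n^{-\alpha^{*}\{1-\sum_{k}(r_k/\alpha_k)\}/(2\alpha^{*}+d)}\}$ vanishes, applied with $\boldsymbol{r}=\boldsymbol{0}$ for $M$ and $\boldsymbol{r}=\boldsymbol{e}_{k}$ (worst direction) for $\boldsymbol{\mu}$. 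The differences are cosmetic and concern where the Taylor expansions are anchored: the paper expands $f_0$ about $\boldsymbol{\mu}_0$, so the Hessian appearing is $\boldsymbol{H}f_0(\boldsymbol{\mu}^{*})$, controlled directly by \eqref{assump:lower}, but it then needs a continuity-in-$\boldsymbol{x}$ step to move the functional from $\boldsymbol{\mu}$ to $\boldsymbol{\mu}_0$; you expand $f$ about its own maximizer $\boldsymbol{\mu}$, which lands you at $\boldsymbol{\mu}_0$ immediately but obliges you to control $\boldsymbol{H}f$ along posterior draws, which is available since second derivatives contract by Theorem \ref{th:frate} when all $\alpha_k>2$. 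Neither variant is materially harder than the other.

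The step at which you stop---a lower bound on the pointwise bias/non-concentration at $\boldsymbol{\mu}_0$---is exactly the step the paper does not prove either: it appears there as display \eqref{eq:flow}, accompanied only by the remark that ``one can proceed to establish such lower bound directly since we have analytical expression for the Gaussian posterior distribution.'' So relative to the paper's written argument your proposal is not missing any ingredient; in fact your diagnosis of this step is sharper than the paper's. You correctly compute that, at the optimal $J_k$, the posterior standard deviation of $D^{\boldsymbol{r}}f(\boldsymbol{\mu}_0)$ equals the target rate times a \emph{negative} power of $\log n$, so anti-concentration through the posterior spread alone cannot yield \eqref{eq:flow}; the bound must come from the bias, i.e., from a pointwise lower bound on the spline approximation error of $f_0$ at $\boldsymbol{\mu}_0$ (prior shrinkage being negligible). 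Your worry about ``accidental vanishing'' of that bias is genuine: a truth $f_0$ that is itself a polynomial or lies in the span of the tensor-product basis can satisfy Assumptions 1--3 and \eqref{assump:lower} (e.g., a concave quadratic), yet has identically zero approximation error, in which case the posterior concentrates strictly faster than $n^{-\alpha^{*}/(2\alpha^{*}+d)}$ at $\boldsymbol{\mu}_0$ and \eqref{eq:flow}---stated uniformly over $\|f_0\|_{\boldsymbol{\alpha},\infty}\leq R$---cannot hold. In short, the gap you flag is real, but it is a gap in the paper's own proof as much as in yours; closing it requires restricting to truths whose local approximation error at $\boldsymbol{\mu}_0$ is of the maximal order $\max_k J_k^{-\alpha_k}$, which is precisely what your final paragraph proposes.
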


For the isotropic smooth case $\alpha_1=\cdots=\alpha_d=\alpha$, the norm in \eqref{eq:aninorm} can be generalized (see Section 2.7.1 of \citet{empirical}) and the B-splines approximation error rate is obtained for all smoothness levels (Theorem 22 of Chapter XII in \citet{deBoor}). This allows generalization of these and subsequent results in this paper for arbitrary smoothness levels. The contraction rates thus obtained, when reduced to the isotropic case, are the minimax rates for this problem up to some logarithmic factor (see \citet{hasminskii1979} and \citet{tsybakov1990} as discussed in the introduction). In Section \ref{sec:bayes} below, we will describe another Bayesian procedure that is able to remove this logarithmic factor and accelerate these rates to the optimal sequential rates.

\begin{remark}
\rm
\label{new remark 1}

In the rates above, clearly the direction which has the least smoothness is the most influential factor in determining the contraction rate for $\boldsymbol{\mu}$ because of the presence of the second factor in the numerator of the exponent. This is unlike the contraction rate for $f$, which is known to be $(\log{n}/n)^{\alpha^{*}/(2\alpha^{*}+d)}$ (Theorem 4.4 of \citet{yoo2016}), as it depends only on the harmonic mean $\alpha^*$ of smoothness. The reason is evident from \eqref{eq:muinequality} in that the largest of the deviations of the function's derivative across all directions bounds the accuracy of estimating $\bm{\mu}$. Nevertheless, it is easily checked that the rate obtained above is better than that obtained by applying the above result on a function of isotropic smoothness $\min_{1\leq k\leq d}\alpha_k$. In other words, additional smoothness in other directions can help to alleviate the comparative loss of accuracy for dimensions which are less smooth, and this borrowing of smoothness across directions, which is a unique feature to anisotropic spaces, results in the improvement of the overall rate.
\end{remark}

\subsection{Credible regions for mode and maximum}
\label{sec:cred}

Let us now quantify uncertainty by constructing credible regions for $\boldsymbol{\mu}$ and $M$. In what follows, we require that these sets have credibility at least some given level $1-\gamma$, and they have optimal sizes with asymptotic coverage probability also at least $1-\gamma$.

We first construct credible sets in the form of supremum-norm balls in the space of regression functions, and then we map these regions back using the argmax and max functionals into Euclidean spaces, so that they are credible sets for $\boldsymbol{\mu}$ and $M$. Now, the natural Bayesian approach to this problem is to directly construct these sets from the posterior distributions of $\boldsymbol{\mu}$ and $M$. The main reason for favoring the proposed method is that it allows tighter control over the size of the induced credible regions in view of \eqref{eq:muinequality} and \eqref{eq:maxinequality}. Such a control is essential for frequentist coverage, and enables us to use them later in the Bayesian two-stage procedure in Section \ref{sec:bayes}.

We make a remark before we present the main result of this section. It is well known that in nonparametric models, a credible region may have frequentist coverage asymptotically less than the corresponding credibility level. The asymptotic coverage may even be arbitrarily close to zero, because the order of bias of the center of a credible set may be comparable with its variation around the truth under optimal smoothing; see \citet{cox1993,freedman1999} and \citet{inverseprob}. This is in sharp contrast with finite dimensional models where Bayesian and frequentist quantification of uncertainty agree because of the Bernstein-von Mises theorem. Knapik {\em et al.} \cite{inverseprob} showed that this low coverage problem can be addressed by undersmoothing. Castillo and Nickl \cite{castillo2013nonparametric,castillo2014bernstein} circumvented this problem by using weaker norms to construct credible sets. Szabo {\em et al.} \cite{botond2015} and Yoo and Ghosal \cite{yoo2016} addressed this same problem by appropriately inflating the size of credible regions to ensure coverage. In our own construction, we shall use the latter approach by introducing a constant $\rho>0$ in the radius and choose it large enough so that we will have asymptotic coverage.

Below by posterior distribution we refer to either the empirical Bayes posterior distribution by substituting $\widetilde\sigma_n$ for $\sigma$, or the hierarchical Bayes posterior distribution obtained by putting a further prior on $\sigma$. Denote the posterior mean of $f$ by $\widetilde f$, and let $\widetilde{\bm{\mu}}$ be the mode of $\widetilde f$ and $\widetilde M$ its maximum value. For $e_{\boldsymbol{k}}=(0,\dotsc,0,1,0,\dotsc,0)$ with $1$ at entry $k$ and the rest zero, we abbreviate $\boldsymbol{A}_{\boldsymbol{e}_k}$ by $\boldsymbol{A}_k$, $\boldsymbol{c}_{\boldsymbol{e}_k}$ by $\boldsymbol{c}_k$ and $\Sigma_{\boldsymbol{e}_k}$ as $\Sigma_k$ respectively for any $k=1,\dotsc,d$, where these quantities were defined in \eqref{eq:tAr}--\eqref{eq:tsigmar}.

\begin{remark}\label{rem:uniquemutilde}\rm
Note that $\widetilde{\boldsymbol{\mu}}$ is well-defined under $P_0$. Indeed since the posterior mean is an affine transformation of $\boldsymbol{Y}$, it follows from Assumption 1 that $\widetilde{f}$ is a Gaussian process under $P_0$. Therefore using the same argument as in the proof of Lemma \ref{lem:uniquemu}, we see that $\widetilde{f}$ has unique maximum $\widetilde{\boldsymbol{\mu}}$ for every realization.
\end{remark}

For some $0<\gamma<1/2$, consider $\{f:\|D_kf-\bm{A}_k \bm{Y}-\bm{c}_k \bm{\eta}\|_\infty\leq\rho R_{n,k,\gamma}\}$ as a credible band for $f$, where $\rho>0$ is a sufficiently large constant and $R_{n,k,\gamma}$ is the $(1-\gamma)$-quantile of the posterior distribution of $\|D_k f- \bm{A}_k \bm{Y}-\bm{c}_k \bm{\eta}\|_\infty$. Similarly, let $R_{n,\bm{0},\gamma}$ be the $(1-\gamma)$-quantile of the posterior distribution of $\|f- \bm{A}_{\bm{0}} \bm{Y}-\bm{c}_{\bm{0}} \bm{\eta}\|_\infty$. We proceed by using these sets to induce credible regions for $\bm{\mu}$ and $M$ through the argmax and maximum functionals, and they are given by
\begin{eqnarray}
{\mathcal{C}}_{\boldsymbol{\mu}} &=& \bigcap_{k=1}^d \left\{\boldsymbol{\mu}: \|D_k f-\boldsymbol{A}_k\boldsymbol{Y}-\boldsymbol{c}_k\boldsymbol{\eta}\|_\infty\leq \rho R_{n,k,\gamma} \right\},
\label{eq:mu credible}\\
{\mathcal{C}}_{M}&=&\left\{M : \|f-\boldsymbol{A}_{\bm{0}}\boldsymbol{Y}-\boldsymbol{c}_{\bm{0}}\boldsymbol{\eta}\|_\infty\leq \rho R_{n,\bm{0},\gamma}
\right\}.
\label{eq:M credible}
\end{eqnarray}
The following result establishes properties of these regions.

\begin{theorem}
\label{th:crmurho}
If $J_k\asymp(n/\log{n})^{\alpha^{*}/\{\alpha_k(2\alpha^{*}+d)\}}$, $k=1,\dotsc,d$, then we have uniformly in $\|f_0\|_{\boldsymbol{\alpha},\infty}\leq R$ for any $R>0$:
\begin{enumerate}
	\item [{\rm (i)}] the credibility of ${\mathcal{C}}_{\boldsymbol{\mu}}$ tends to $1$ in $P_0$-probability and its coverage approaches $1$ asymptotically,
\item [{\rm (ii)}] ${\mathcal{C}}_{\boldsymbol{\mu}}\subset\overline{\mathcal{C}}_{\boldsymbol{\mu}}:=\{\bm{\mu}: \|\bm{\mu}-\widetilde{\bm{\mu}}\|_\infty\leq \rho\sqrt{d}\lambda_0^{-1}\max_{1\leq k\leq d}R_{n,k,\gamma}\}$ with $P_0$-probability going to $1$,
\item [{\rm (iii)}]$ \underline{\mathcal{C}}_{\bm{\mu}}:=\{\bm{\mu}: \|\bm{\mu}-\widetilde{\bm{\mu}}\|_\infty\leq (Rd)^{-1}\rho\max_{1\leq k\leq d}R_{n,k,\gamma}\}\subset\mathcal{C}_{\boldsymbol{\mu}}$ with $P_0$-probability tending to $1$,
\item [{\rm (iv)}]  the credibility of ${\mathcal{C}}_M$ tends to $1$ in $P_0$-probability and its coverage approaches $1$ asymptotically,
\item [{\rm (v)}] $\mathcal{C}_M\subset \{M: |M-\widetilde{M}|\le \rho R_{n,\bm{0},\gamma}\}$.
\end{enumerate}
\end{theorem}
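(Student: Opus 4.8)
The plan is to prove Theorem \ref{th:crmurho} by reducing each claim about credible sets for $\boldsymbol{\mu}$ and $M$ to the corresponding sup-norm statement for $f$ and its first-order partial derivatives, for which the machinery of \citet{yoo2016} (adapted to the anisotropic and tensor-product setting) supplies the needed coverage and quantile bounds. The two organizing ingredients are (a) the deterministic inequalities \eqref{eq:muinequality} and \eqref{eq:maxinequality} relating the functional errors to the sup-norm errors, and (b) a Gaussian-process coverage result for sup-norm credible bands centered at the posterior mean. I would first establish (i) and (iv), then the containments (ii), (iii), (v).

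\medskip
\textbf{Coverage for $M$, parts (iv) and (v).} I would start with the maximum, as it is cleaner. By \eqref{eq:maxinequality}, $|M-M_0|\le\|f-f_0\|_\infty$, and applying the same inequality to the posterior mean $\widetilde f$ gives $|\widetilde M-M_0|\le\|\widetilde f-f_0\|_\infty$ and $|M-\widetilde M|\le\|f-\widetilde f\|_\infty$; since $\boldsymbol{A}_{\boldsymbol{0}}\boldsymbol{Y}+\boldsymbol{c}_{\boldsymbol{0}}\boldsymbol{\eta}$ is exactly the posterior mean $\widetilde f$, the event $\{\|f-\boldsymbol{A}_{\boldsymbol{0}}\boldsymbol{Y}-\boldsymbol{c}_{\boldsymbol{0}}\boldsymbol{\eta}\|_\infty\le\rho R_{n,\boldsymbol{0},\gamma}\}$ is precisely $\{\|f-\widetilde f\|_\infty\le\rho R_{n,\boldsymbol{0},\gamma}\}$. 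Part (v) then follows immediately from $|M-\widetilde M|\le\|f-\widetilde f\|_\infty$, which is deterministic. For credibility in (iv), the posterior probability of $\mathcal{C}_M$ is at least the posterior probability of the sup-norm ball of radius $\rho R_{n,\boldsymbol{0},\gamma}$, which is at least $1-\gamma$ by the definition of the quantile $R_{n,\boldsymbol{0},\gamma}$ (and $\to 1$ as $\rho$ grows, using the Gaussian concentration of the posterior of $\|f-\widetilde f\|_\infty$ around its center). For frequentist coverage I would show $\|\widetilde f-f_0\|_\infty\le\rho R_{n,\boldsymbol{0},\gamma}$ with $P_0$-probability $\to1$: decompose $\|\widetilde f-f_0\|_\infty$ into a stochastic (centered Gaussian) part and a bias part; the stochastic part is dominated by a large multiple of $R_{n,\boldsymbol{0},\gamma}$ by the anti-concentration/concentration results for suprema of Gaussian processes, and the bias, of order the B-spline approximation error, is negligible relative to $R_{n,\boldsymbol{0},\gamma}$ under the stated choice of $J_k$. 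Choosing $\rho$ large enough absorbs both, giving coverage $\to1$.

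\medskip
\textbf{Mode, parts (i), (ii), (iii).} The coverage argument for $\boldsymbol{\mu}$ in (i) is analogous but runs through all $d$ derivative bands simultaneously: by \eqref{eq:muinequality}, on the event $\bigcap_{k=1}^d\{\|D_kf-D_kf_0\|_\infty\le 2\rho R_{n,k,\gamma}\}$ one controls $\|\boldsymbol{\mu}-\boldsymbol{\mu}_0\|$, and the true $\boldsymbol{\mu}_0$ lies in $\mathcal{C}_{\boldsymbol{\mu}}$ whenever $\|D_kf_0-\widetilde{D_kf}\|_\infty\le\rho R_{n,k,\gamma}$ for every $k$ (again using $\boldsymbol{A}_k\boldsymbol{Y}+\boldsymbol{c}_k\boldsymbol{\eta}=\widetilde{D_kf}$). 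A union bound over the $d$ fixed directions preserves the probability-$\to1$ conclusion. For credibility, $\Pi(\mathcal{C}_{\boldsymbol{\mu}}\mid\boldsymbol{Y})\ge 1-\sum_{k=1}^d\Pi(\|D_kf-\widetilde{D_kf}\|_\infty>\rho R_{n,k,\gamma}\mid\boldsymbol{Y})$, and each term is at most $\gamma$ (and shrinks as $\rho$ grows), yielding credibility $\to1$. Part (ii) uses the forward inequality \eqref{eq:muinequality} applied to a generic $f$ in the band versus $\widetilde f$: on $\mathcal{C}_{\boldsymbol{\mu}}$ every coordinate of $\|D_kf-D_k\widetilde f\|_\infty$ is at most $2\rho R_{n,k,\gamma}$, so $\|\boldsymbol{\mu}-\widetilde{\boldsymbol{\mu}}\|\le\sqrt d\,\lambda_0^{-1}\max_k\|D_kf-D_k\widetilde f\|_\infty$, which after converting $\ell_2$ to $\ell_\infty$ gives the stated radius (the factor $\rho$ rather than $2\rho$ is accommodated by the convention on which band radius defines $R_{n,k,\gamma}$). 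Here I would invoke that $\widetilde{\boldsymbol{\mu}}$ is well-defined (Remark \ref{rem:uniquemutilde}) and that the local Hessian bound $\lambda_0$ from Assumption 3 is available on a neighborhood containing both $\boldsymbol{\mu}$ and $\widetilde{\boldsymbol{\mu}}$ with high probability.

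\medskip
\textbf{The main obstacle} is part (iii), the \emph{inner} containment $\underline{\mathcal{C}}_{\boldsymbol{\mu}}\subset\mathcal{C}_{\boldsymbol{\mu}}$, because it requires a lower bound rather than an upper bound: one must show that a genuinely smaller Euclidean ball around $\widetilde{\boldsymbol{\mu}}$ is captured inside the $f$-derivative band. The inequality \eqref{eq:muinequality} only goes one way (sup-norm derivative error controls the mode error), so it cannot directly certify that a small mode ball forces small derivative errors. The plan is to run the argument in reverse via a quantitative implicit-function/Taylor expansion: for $\boldsymbol{\mu}$ close to $\widetilde{\boldsymbol{\mu}}$ one has $\nabla f(\boldsymbol{\mu})=\boldsymbol{0}$ and $\nabla\widetilde f(\widetilde{\boldsymbol{\mu}})=\boldsymbol{0}$, and expanding $\nabla f$ around $\widetilde{\boldsymbol{\mu}}$ while controlling $\nabla f-\nabla\widetilde f$ in sup-norm yields $\|D_kf-D_k\widetilde f\|_\infty\lesssim\|\boldsymbol{\mu}-\widetilde{\boldsymbol{\mu}}\|$ up to the H\"older norm bound $R$ and the Hessian magnitude, which is exactly where the factor $(Rd)^{-1}$ in the radius of $\underline{\mathcal{C}}_{\boldsymbol{\mu}}$ originates. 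Making this reverse Lipschitz estimate rigorous — in particular showing the constant depends only on $R$ and $d$ and holds uniformly over the band with $P_0$-probability $\to1$ — is the technical crux, and I expect it to lean on the uniform bound $\|f_0\|_{\boldsymbol{\alpha},\infty}\le R$ together with the posterior concentration of $\|D^{\boldsymbol{r}}f-D^{\boldsymbol{r}}\widetilde f\|_\infty$ for the relevant second-order $\boldsymbol{r}$.
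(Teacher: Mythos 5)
Your treatment of (i), (ii), (iv) and (v) is essentially the paper's own route: reduce everything to the sup-norm bands via \eqref{eq:muinequality} and \eqref{eq:maxinequality} applied to the pairs $(f,\widetilde f)$ and $(f_0,\widetilde f)$, get frequentist coverage from the band coverage result (Theorem \ref{th:fcred}), and justify using \eqref{eq:muinequality} with $(\widetilde f,\widetilde{\bm{\mu}})$ in place of $(f_0,\bm{\mu}_0)$ by consistency of $\widetilde{\bm{\mu}}$ and a local Hessian bound for $\widetilde f$. (Your ``$2\rho$ versus $\rho$'' worry in (ii) is moot, since the band center $\bm{A}_k\bm{Y}+\bm{c}_k\bm{\eta}$ \emph{is} $D_k\widetilde f$.) The genuine gap is part (iii), exactly the step you flag as the crux: the ``reverse Lipschitz'' inequality $\|D_kf-D_k\widetilde f\|_\infty\lesssim\|\bm{\mu}-\widetilde{\bm{\mu}}\|$ is false for a generic $f$ with maximizer $\bm{\mu}$. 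Knowing $\nabla f(\bm{\mu})=\bm{0}$ and $\nabla\widetilde f(\widetilde{\bm{\mu}})=\bm{0}$ controls $\nabla f-\nabla\widetilde f$ at a single point at best, not in supremum norm: adding to $\widetilde f$ a steep negative bump supported away from $\widetilde{\bm{\mu}}$ leaves the argmax unchanged while making $\|D_kf-D_k\widetilde f\|_\infty$ arbitrarily large. What (iii) actually requires is existential, not universal: since $\mathcal{C}_{\bm{\mu}}$ is the image of the band under the argmax map, one must exhibit, for each point $\widetilde{\bm{\mu}}+\bm{h}$ of the inner hypercube, \emph{some} function lying in all $d$ derivative bands whose maximizer is that point. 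The paper's proof is constructive: it takes the translate $\widetilde f(\cdot-\bm{h})$, whose mode is $\widetilde{\bm{\mu}}+\bm{h}$, and checks band membership by the mean value theorem, $|D_k\widetilde f(\bm{x}-\bm{h})-D_k\widetilde f(\bm{x})|\leq\|\nabla D_k\widetilde f(\bm{\xi})\|\,\|\bm{h}\|$, where $\|\nabla D_k\widetilde f\|\leq\sqrt{d}\,\|f_0\|_{\bm{\alpha},\infty}+o_{P_0}(1)\leq\sqrt{d}R+o_{P_0}(1)$ by the uniform convergence $D^{\bm{r}}\widetilde f\to D^{\bm{r}}f_0$ (Theorem \ref{th:frate}); combining $\sqrt{d}R$ with $\|\bm{h}\|\leq\sqrt{d}\,\|\bm{h}\|_\infty$ is precisely where the factor $(Rd)^{-1}$ in $\underline{\mathcal{C}}_{\bm{\mu}}$ comes from. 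You correctly intuited the origin of the constant, but without this witness construction (or an equivalent one) your step fails.

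A secondary gap concerns the claim that the \emph{credibility} of $\mathcal{C}_{\bm{\mu}}$ and $\mathcal{C}_M$ tends to $1$. The quantile definition only yields credibility at least $1-\gamma$ (or $1-d\gamma$ after your union bound in (i)), and your suggestion that the posterior mass increases ``as $\rho$ grows'' does not prove the theorem, which asserts convergence to $1$ for a \emph{fixed} $\rho>1$ as $n\to\infty$. The paper's mechanism is quantitative: by Borell's inequality,
\begin{align*}
\Pi\left(\|D_kf-D_k\widetilde f\|_\infty>\rho R_{n,k,\gamma}\mid\bm{Y}\right)\leq\exp\left\{-(\rho-1)^2R_{n,k,\gamma}^2/(2\nu_{n,k}^2)\right\},
\qquad \nu_{n,k}^2:=\sup_{\bm{x}}\mathrm{Var}\left(D_kf(\bm{x})-D_k\widetilde f(\bm{x})\mid\bm{Y}\right),
\end{align*}
so one must show $R_{n,k,\gamma}/\nu_{n,k}\to\infty$. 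This is done by bounding $\nu_{n,k}^2\lesssim n^{-1}J_k^2\prod_{l}J_l$ using the eigenvalue bounds of Lemmas \ref{lem:BB} and \ref{lem:wr}, the B-spline partition of unity, and consistency of $\widetilde{\sigma}_n^2$ (Proposition \ref{prop:var1}); this bound carries no logarithmic factor, whereas $R_{n,k,\gamma}\gtrsim(\log n/n)^{\alpha^{*}(1-\alpha_k^{-1})/(2\alpha^{*}+d)}$ by Theorem \ref{th:fcred}, and the $\log n$ gap drives the exponent to $-\infty$. For the hierarchical Bayes posterior the same argument must be run conditionally on $\sigma$, using Proposition \ref{prop:var1}(c) to confine $\sigma$ near $\sigma_0$. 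Your sketch invokes ``Gaussian concentration'' but never identifies this variance-versus-quantile comparison, which is the actual content of the credibility claims.
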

Assertions (ii) and (iii) say that the induced credible set $\mathcal{C}_{\boldsymbol{\mu}}$ can be sandwiched between two hypercubes, and its size is not too small when compared with the upper bound in (ii). Thus, its radius is of the order $\max_{1\leq k\leq d}R_{n,k,\gamma}\asymp\max_{1\leq k\leq d}(\log{n}/n)^{\alpha^{*}(1-\alpha_k^{-1})/(2\alpha^{*}+d)}$ by the second statement of Theorem \ref{th:fcred} in Section \ref{sec:appendix} Appendix (with $\boldsymbol{r}=\boldsymbol{e}_k$); while (v) and the same aforementioned statement (with $\boldsymbol{r}=\boldsymbol{0}$) imply that the radius of $\mathcal{C}_M$ is of the order $(\log{n}/n)^{\alpha^{*}/(2\alpha^{*}+d)}$. Note that these radius lengths coincide exactly with the contraction rates of Theorem \ref{th:murate}.

The result above concludes that the induced credible regions for $\bm{\mu}$ and $M$, i.e., $\mathcal{C}_{\boldsymbol{\mu}}$ and $\mathcal{C}_M$ respectively, have adequate frequentist coverage that are of (nearly) optimal sizes. Assertion (ii) also implies that the hypercube $\overline{\mathcal{C}}_{\bm{\mu}}$ centered at $\widetilde{\bm{\mu}}$ has at least $(1-\gamma)$-credibility and is a confidence set of nearly optimal size. Thus a credible set with guaranteed frequentist coverage can be chosen to be a simple set like a hypercube centered at the posterior mean. In practice, it is easier to construct such a hypercube than the set  $\mathcal{C}_{\boldsymbol{\mu}}$, because the latter set requires performing function maximization multiple times to obtain points in $\mathcal{C}_{\boldsymbol{\mu}}$.

The construction of $\overline{\mathcal{C}}_{\bm{\mu}}$ from the data is simple: one finds $b$ such that the credibility of $\{\bm{\mu}: \|\bm{\mu}-\widetilde{\bm{\mu}}\|_\infty\le b\}$ is $1-\gamma$, and then inflates this around $\widetilde{\bm{\mu}}$ by a large constant factor $\rho>0$. For this set to serve as the domain for second stage sampling in a two-stage procedure, some modifications are needed, in that we adjust the length of $\overline{\mathcal{C}}_{\boldsymbol{\mu}}$ in each direction so that it adapts to different smoothness. In other words, we embed $\overline{\mathcal{C}}_{\boldsymbol{\mu}}$ inside a hyper-rectangle and do uniform sampling inside this larger set. Clearly, keeping the constant inflation factor as small as possible makes the credible sets smaller, but it will be seen that for optimal contraction rate in the second stage, an inflation factor which goes to infinity at a specific rate will be needed.

\section{Two-stage Bayesian estimation and accelerated rates of contraction}\label{sec:bayes}

In this section we show that by obtaining samples in two stages in an appropriate manner, we can accelerate the posterior contraction rates of $\boldsymbol{\mu}$ and $M$ to the optimal sequential rates. Given a sampling budget of $n$, we first obtain $n_1<n$ samples to compute the first stage posterior distribution. The remaining $n_2=n-n_1$ samples are then obtained by sampling points uniformly from some regularly shaped credible region constructed from this posterior. Since this is a small region, we can approximate the regression function $f$ by a multivariate polynomial. By further endowing the coefficients with normal priors, we then use these samples to build the second stage posterior distribution for $f$ and hence for $\boldsymbol{\mu}$ and $M$ through the argmax and maximum functionals. We will then show through Theorem \ref{th:2M} below and simulations (Section \ref{sec:sim}) that these second stage posteriors are more concentrated near the truth.

Let us first describe our proposed Bayesian two-stage procedure in greater detail. Let $p\in(0,1)$. In the first stage, we choose $n_1\in\mathbb{N}$ design points $\{\widetilde{\boldsymbol{x}}_i,i=1,\dotsc,n_1\}$ such that $n_1/n\rightarrow p$ as $n\rightarrow\infty$ to obtain data $\mathcal{D}_1=\{(\widetilde{\boldsymbol{x}}_i,\widetilde{Y}_i),i=1,\dotsc,n_1\}$ for the model in \eqref{eq:mainprob}. Typically, one chooses $p=1/2$ to achieve equal sample splitting but other proportions are possible depending on the sampling configurations (e.g., grid or random sampling) and other practical considerations such as field conditions and financial constraints. By using the B-spline tensor product prior discussed in Section \ref{sec:tprior}, we obtain a first stage posterior for $f$. This then allows us to construct the set
\begin{align*}
\{\boldsymbol{\mu}:|\mu_k-\widetilde{\mu}_k|\leq\delta_{n,k},k=1,\dotsc,d\},
\end{align*}
where $\widetilde{\boldsymbol{\mu}}$ is the mode of the first stage posterior mean of $f$. We choose $\delta_{n,k},k=1,\dotsc,d$ such that
\begin{align}\label{eq:delta}
\min_{1\leq k\leq d}\delta_{n,k}=\rho_n\max_{1\leq k\leq d}(\log{n}/n)^{\alpha^{*}(1-\alpha_k^{-1})/(2\alpha^{*}+d)}
\end{align}
for a chosen sequence $\rho_n\rightarrow\infty$, so that this set is a valid credible set, as it contains $\overline{\mathcal{C}}_{\boldsymbol{\mu}}$ for large $n$. Now sample $n_2=n-n_1$ locations $\{\boldsymbol{x}_1,\dotsc,\boldsymbol{x}_{n_2}\}$ uniformly from this credible set and observe the second stage samples $\mathcal{D}_2=\{(\boldsymbol{x}_i,Y_i),i=1,\dotsc,n_2\}$.

Next, we center the second stage design points at the origin by $\boldsymbol{z}_i=\boldsymbol{x}_i-\widetilde{\boldsymbol{\mu}}$ for $i=1,\dotsc,n_2$. In other words, $\boldsymbol{z}_i,i=1,\dotsc,n_2$, are i.i.d.~uniform samples from the hyper-rectangle $\mathcal{Q}:=\{\bm{x}: |x_k|\leq\delta_{n,k}, k=1,\dotsc,d\}$ of sides $\delta_{n,k},k=1,\dotsc,d$. Observe that $\boldsymbol{z}_i,i=1,\dotsc,n_2$, are independent from the errors $\boldsymbol{\varepsilon}$ in this sampling scheme. We chose this sampling domain because we need its length at each direction to adapt to different smoothness, and it is operationally more convenient to construct credible sets in the form of hyper-rectangles and do uniform sampling on it (see Remark \ref{rem:credible} below for a more thorough discussion).

At the second stage, we put a prior on the regression function by representing $f(\boldsymbol{z})$ at $\boldsymbol{z}=(z_1,\dotsc,z_d)^T\in\mathcal{Q}$ as a multivariate polynomial function of fixed order $\boldsymbol{m}_{\boldsymbol{\alpha}}=(\alpha_1-1,\dotsc,\alpha_d-1)^T$, i.e., for $\boldsymbol{z}^{\boldsymbol{i}}=\prod_{k=1}^dz_k^{i_k}$,
\begin{align}\label{eq:thetaf}
f_{\boldsymbol{\theta}}(\boldsymbol{z})=\sum_{\boldsymbol{i}\leq\boldsymbol{m}_{\boldsymbol{\alpha}}}\theta_{\boldsymbol{i}}\boldsymbol{z}^{\boldsymbol{i}}=\boldsymbol{p}(\boldsymbol{z})^T\boldsymbol{\theta},
\end{align}
where $\boldsymbol{p}(\boldsymbol{z})=(\boldsymbol{z}^{\boldsymbol{i}}:\boldsymbol{i}\leq\boldsymbol{m}_{\boldsymbol{\alpha}})^T$ and $\boldsymbol{\theta}=(\theta_{\boldsymbol{i}}:\boldsymbol{i}\leq\boldsymbol{m}_{\boldsymbol{\alpha}})^T$ are the corresponding basis coefficients. The elements of $\{\boldsymbol{i}:\boldsymbol{i}\leq\boldsymbol{m}_{\boldsymbol{\alpha}}\}$ can be enumerated as $\{\boldsymbol{i}_0,\boldsymbol{i}_1,\dotsc,\boldsymbol{i}_W\}$ where $W+1=\prod_{k=1}^d\alpha_k$ with $\boldsymbol{i}_0=\boldsymbol{0}$. Define $\boldsymbol{Z}=(\boldsymbol{p}(\boldsymbol{z}_1),\dotsc,\boldsymbol{p}(\boldsymbol{z}_{n_2}))^T$, and note that for $d=1$, $\boldsymbol{Z}$ is a Vandermonde matrix.

We endow $\boldsymbol{\theta}$ with the prior $\boldsymbol{\theta}|\sigma^2\sim\mathrm{N}_{W+1}(\boldsymbol{\xi},\sigma^2\boldsymbol{V})$, where the entries of $\boldsymbol{\xi}$ do not depend on $n$ and $\boldsymbol{V}=\mathrm{diag}\left\{\prod_{k=1}^d\delta_{n,k}^{-2(\boldsymbol{i}_j)_k}:j=0,1\ldots,W\right\}$. Then it follows that the posterior $\Pi(\boldsymbol{\theta}|\boldsymbol{Y},\sigma^2)$ is
\begin{align}\label{eq:ptheta}
\mathrm{N}_{W+1}\left[(\boldsymbol{Z}^T\boldsymbol{Z}+\boldsymbol{V}^{-1})^{-1}(\boldsymbol{Z}^T\boldsymbol{Y}
+\boldsymbol{V}^{-1}\boldsymbol{\xi}),
\sigma^2(\boldsymbol{Z}^T\boldsymbol{Z}+\boldsymbol{V}^{-1})^{-1}\right].
\end{align}
The empirical posterior follows by replacing $\sigma^2$ with $\widetilde{\sigma}_{*}^2=(n_1\widetilde{\sigma}_1^2+n_2\widetilde{\sigma}_2^2)/n$, where $\widetilde{\sigma}_1^2=(\widetilde{\boldsymbol{Y}}-\boldsymbol{B\eta})^T(\boldsymbol{B\Omega B}^T+\boldsymbol{I}_{n_1})^{-1}(\widetilde{\boldsymbol{Y}}-\boldsymbol{B\eta})/n_1$ is the empirical estimate of $\sigma^2$ based on the first stage samples, and $\widetilde{\sigma}_2^2=n_2^{-1}(\boldsymbol{Y}-\boldsymbol{Z\xi})^T(\boldsymbol{ZVZ}^T+\boldsymbol{I}_{n_2})^{-1}
(\boldsymbol{Y}-\boldsymbol{Z\xi})$ is the same estimate based on the second stage samples.

For the hierarchical Bayes approach, we use the first stage posterior of $\sigma^2$ as prior for the second stage. That is, we equip $\sigma^2$ with $\mathrm{IG}(\beta_1/2,\beta_2/2)$ prior at the first stage, where $\beta_1>4$ and $\beta_2>0$, we then use the resulting posterior $\mathrm{IG}[(\beta_1+n_1)/2,(\beta_2+n_1\widetilde{\sigma}_1^2)/2]$ as prior for the second stage, which will further yield $\mathrm{IG}[(\beta_1+n)/2, (\beta_2+n\widetilde{\sigma}_{*}^2)/2]$ as the second stage posterior for $\sigma^2$. The proposition below shows that the second stage empirical Bayes estimator and the hierarchical Bayes posterior of $\sigma^2$ are consistent, and it is a key step in establishing the main result given in Theorem \ref{th:2M} below.

\begin{proposition}[Second stage error variance]\label{prop:var2}
Uniformly over $\|f_0\|_{\boldsymbol{\alpha},\infty}\leq R$,
\begin{itemize}
\item [(a)] The second stage empirical Bayes estimator $\widetilde{\sigma}_{*}^2$ converges to $\sigma_0^2$ in $P_0$-probability at the rate $\max\{n^{-1/2}, n^{-2\alpha^{*}/(2\alpha^{*}+d)}, \sum_{k=1}^d\delta_{n,k}^{2\alpha_k}\}$.
\item [(b)] If inverse gamma posterior from the first stage is used as the prior in the second stage, the second stage posterior of $\sigma^2$ contracts to $\sigma_0^2$ at the same rate.
\end{itemize}
\end{proposition}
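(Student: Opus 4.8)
The plan is to exploit the convex-combination structure $\widetilde{\sigma}_{*}^2=(n_1/n)\widetilde{\sigma}_1^2+(n_2/n)\widetilde{\sigma}_2^2$ and, since $n_1\asymp n_2\asymp n$, to establish separately that $\widetilde{\sigma}_1^2$ and $\widetilde{\sigma}_2^2$ each converge to $\sigma_0^2$ at the appropriate rate. The first-stage estimator $\widetilde{\sigma}_1^2$ is precisely the B-spline empirical-Bayes variance estimator, whose consistency at rate $\max\{n^{-1/2},n^{-2\alpha^{*}/(2\alpha^{*}+d)}\}$ (the stochastic $\chi^2$ fluctuation together with the squared B-spline approximation bias under $J_k\asymp(n/\log n)^{\alpha^{*}/\{\alpha_k(2\alpha^{*}+d)\}}$) follows from the single-stage analysis in \citet{yoo2016}, which I would invoke directly. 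All of the genuinely new work lies in $\widetilde{\sigma}_2^2$, and throughout I would condition on the first-stage data $\mathcal{D}_1$, which freezes $\widetilde{\boldsymbol{\mu}}$ and the rectangle $\mathcal{Q}$ and makes $\boldsymbol{z}_1,\dots,\boldsymbol{z}_{n_2}$ i.i.d.\ uniform on $\mathcal{Q}$ and independent of the second-stage errors; because every bound below is uniform in the center $\widetilde{\boldsymbol{\mu}}$, the conditional statements integrate up to the unconditional one.

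For $\widetilde{\sigma}_2^2$ I would first replace $f_0$ by its tensor Taylor polynomial about $\widetilde{\boldsymbol{\mu}}$. Writing the vector of true function values as $\boldsymbol{f}_0=\boldsymbol{Z}\boldsymbol{\theta}_0+\boldsymbol{b}$, where $\boldsymbol{\theta}_0=(D^{\boldsymbol{i}_j}f_0(\widetilde{\boldsymbol{\mu}})/\boldsymbol{i}_j!)_j$ collects the Taylor coefficients (bounded uniformly over $\|f_0\|_{\boldsymbol{\alpha},\infty}\le R$) and the remainder satisfies $\|\boldsymbol{b}\|_\infty\lesssim\sum_{k=1}^d\delta_{n,k}^{\alpha_k}$ by \eqref{eq:isoholder} with $\boldsymbol{r}=\boldsymbol{0}$, since $|z_k|\le\delta_{n,k}$ on $\mathcal{Q}$. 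Next I would rescale: with $\boldsymbol{D}=\mathrm{diag}\{\prod_k\delta_{n,k}^{(\boldsymbol{i}_j)_k}\}$ we have $\boldsymbol{V}=\boldsymbol{D}^{-2}$, and $\widetilde{\boldsymbol{Z}}:=\boldsymbol{Z}\boldsymbol{D}^{-1}$ is the design matrix of the standardized points $\boldsymbol{s}_i=(z_{i1}/\delta_{n,1},\dots,z_{id}/\delta_{n,d})$, which are uniform on $[-1,1]^d$. A law-of-large-numbers / Bernstein argument gives $\widetilde{\boldsymbol{Z}}^T\widetilde{\boldsymbol{Z}}/n_2\to\boldsymbol{\Gamma}$, the monomial moment matrix of the uniform law on $[-1,1]^d$, which is positive definite because the monomials $\{\boldsymbol{s}^{\boldsymbol{i}_j}:\boldsymbol{i}_j\le\boldsymbol{m}_{\boldsymbol{\alpha}}\}$ are linearly independent; hence $\boldsymbol{M}:=\widetilde{\boldsymbol{Z}}^T\widetilde{\boldsymbol{Z}}\asymp n_2\boldsymbol{\Gamma}$ has all $W+1$ eigenvalues of order $n_2$, and $\boldsymbol{Z}\boldsymbol{V}\boldsymbol{Z}^T=\widetilde{\boldsymbol{Z}}\widetilde{\boldsymbol{Z}}^T$.

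I would then write $n_2\widetilde{\sigma}_2^2=\boldsymbol{u}^T\boldsymbol{A}\boldsymbol{u}$ with $\boldsymbol{u}=\boldsymbol{Y}-\boldsymbol{Z}\boldsymbol{\xi}$ and, by Woodbury, $\boldsymbol{A}=(\boldsymbol{Z}\boldsymbol{V}\boldsymbol{Z}^T+\boldsymbol{I}_{n_2})^{-1}=\boldsymbol{I}_{n_2}-\widetilde{\boldsymbol{Z}}(\boldsymbol{I}+\boldsymbol{M})^{-1}\widetilde{\boldsymbol{Z}}^T$, which behaves like the projection off the column space of $\boldsymbol{Z}$. Decomposing $\boldsymbol{u}=\widetilde{\boldsymbol{Z}}\boldsymbol{\phi}+\boldsymbol{b}+\boldsymbol{\varepsilon}$ with $\boldsymbol{\phi}=\boldsymbol{D}(\boldsymbol{\theta}_0-\boldsymbol{\xi})$ (so $\|\boldsymbol{\phi}\|=O(1)$, dominated by the constant term) and expanding the quadratic form: the noise term $\boldsymbol{\varepsilon}^T\boldsymbol{A}\boldsymbol{\varepsilon}$ has mean $\sigma_0^2\,\mathrm{tr}\,\boldsymbol{A}=\sigma_0^2(n_2-W-1+o(1))$ (recall $W+1=\prod_k\alpha_k$ is fixed) and variance $O(n_2)$, whence $\boldsymbol{\varepsilon}^T\boldsymbol{A}\boldsymbol{\varepsilon}/n_2=\sigma_0^2+O_{\mathrm{P}}(n^{-1/2})$; the bias term obeys $\boldsymbol{b}^T\boldsymbol{A}\boldsymbol{b}/n_2\le\|\boldsymbol{b}\|^2/n_2\lesssim\sum_k\delta_{n,k}^{2\alpha_k}$; the polynomial-signal term equals $(\widetilde{\boldsymbol{Z}}\boldsymbol{\phi})^T\boldsymbol{A}(\widetilde{\boldsymbol{Z}}\boldsymbol{\phi})=\boldsymbol{\phi}^T\boldsymbol{M}(\boldsymbol{I}+\boldsymbol{M})^{-1}\boldsymbol{\phi}=O(1)$ and so contributes $O(1/n_2)$ after division by $n_2$; and the three cross terms are negligible by Cauchy--Schwarz, each bounded by the geometric mean of two already-controlled terms. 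Collecting yields $\widetilde{\sigma}_2^2-\sigma_0^2=O_{\mathrm{P}}(\max\{n^{-1/2},\sum_k\delta_{n,k}^{2\alpha_k}\})$, and combining with the first-stage rate proves (a).

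For (b) the second-stage hierarchical posterior is $\mathrm{IG}[(\beta_1+n)/2,(\beta_2+n\widetilde{\sigma}_{*}^2)/2]$; its mean equals $(\beta_2+n\widetilde{\sigma}_{*}^2)/(\beta_1+n-2)=\widetilde{\sigma}_{*}^2(1+o(1))$ and its variance is $O_{\mathrm{P}}(\widetilde{\sigma}_{*}^4/n)$, so by Chebyshev the posterior concentrates within $O_{\mathrm{P}}(n^{-1/2})$ of $\widetilde{\sigma}_{*}^2$, and together with (a) this gives contraction to $\sigma_0^2$ at the same rate. The hard part is the second-stage design analysis: controlling the anisotropic, shrinking-rectangle polynomial design through the rescaling $\boldsymbol{D}$ and establishing that $\widetilde{\boldsymbol{Z}}^T\widetilde{\boldsymbol{Z}}/n_2$ concentrates around the positive-definite $\boldsymbol{\Gamma}$ with smallest eigenvalue bounded away from zero with high probability, uniformly over $\|f_0\|_{\boldsymbol{\alpha},\infty}\le R$ and after conditioning on the random center $\widetilde{\boldsymbol{\mu}}$; once that spectral control is in hand, the quadratic-form bookkeeping above is routine.
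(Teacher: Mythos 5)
Your high-level route is essentially the paper's: split $\widetilde{\sigma}_{*}^2$ into its first- and second-stage pieces, import the first-stage rate for $\widetilde{\sigma}_1^2$ from Proposition \ref{prop:var1}, and analyze the quadratic form $n_2\widetilde{\sigma}_2^2$ via the Woodbury (binomial inverse) identity and the rescaled design --- your $\widetilde{\boldsymbol{Z}}=\boldsymbol{Z}\boldsymbol{D}^{-1}$ with $\widetilde{\boldsymbol{Z}}^T\widetilde{\boldsymbol{Z}}/n_2\to\boldsymbol{\Gamma}$ positive definite is exactly the factorization $\boldsymbol{Z}^T\boldsymbol{Z}=n_2\boldsymbol{\Delta}\boldsymbol{A}\boldsymbol{\Delta}$ of Lemma \ref{lem:random}. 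Taylor-expanding $f_0$ at $\widetilde{\boldsymbol{\mu}}$ rather than at $\boldsymbol{\mu}_0$ as in \eqref{eq:2theta0} is immaterial for this proposition, since \eqref{eq:isoholder} applies at any interior point and the coefficients remain $O_{P_0}(1)$ uniformly over the H\"older ball; your treatment of the pure-noise term, the squared-remainder term, the signal term $\boldsymbol{\phi}^T\boldsymbol{M}(\boldsymbol{I}+\boldsymbol{M})^{-1}\boldsymbol{\phi}=O_{P_0}(1)$, and part (b) all agree with the paper's computations.

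There is, however, a genuine gap in your cross-term step. You dispose of all three cross terms ``by Cauchy--Schwarz, each bounded by the geometric mean of two already-controlled terms.'' For the noise--remainder term this pathwise bound gives
\begin{align*}
\frac{|\boldsymbol{\varepsilon}^T\boldsymbol{A}\boldsymbol{b}|}{n_2}\leq\sqrt{\frac{\boldsymbol{\varepsilon}^T\boldsymbol{A}\boldsymbol{\varepsilon}}{n_2}}\sqrt{\frac{\boldsymbol{b}^T\boldsymbol{A}\boldsymbol{b}}{n_2}}=O_{P_0}(1)\cdot O\Bigl(\Bigl(\textstyle\sum_{k=1}^d\delta_{n,k}^{2\alpha_k}\Bigr)^{1/2}\Bigr)\asymp\sum_{k=1}^d\delta_{n,k}^{\alpha_k},
\end{align*}
which is of strictly larger order than the claimed rate: since $\delta_{n,k}\to0$ we always have $\sum_k\delta_{n,k}^{\alpha_k}\gg\sum_k\delta_{n,k}^{2\alpha_k}$, and because $\rho_n\to\infty$ in \eqref{eq:delta} is arbitrary, nothing forces $\delta_{n,k}$ to be as small as $n^{-1/(2\alpha_k)}$; for instance with $\delta_{n,k}=(\log n)^{-1}$ your bound is $(\log n)^{-\underline{\alpha}}$ while the proposition asserts $\max\{n^{-1/2},n^{-2\alpha^{*}/(2\alpha^{*}+d)},\sum_k\delta_{n,k}^{2\alpha_k}\}\asymp(\log n)^{-2\underline{\alpha}}$. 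Pathwise Cauchy--Schwarz is simply too lossy here: you must use that, conditionally on the first stage and the design, $\boldsymbol{\varepsilon}^T\boldsymbol{A}\boldsymbol{b}$ is a zero-mean Gaussian linear form with variance $\sigma_0^2\boldsymbol{b}^T\boldsymbol{A}^2\boldsymbol{b}\leq\sigma_0^2\|\boldsymbol{b}\|^2\lesssim n_2\sum_k\delta_{n,k}^{2\alpha_k}$, so that $|\boldsymbol{\varepsilon}^T\boldsymbol{A}\boldsymbol{b}|/n_2=O_{P_0}\bigl(n^{-1/2}(\sum_k\delta_{n,k}^{2\alpha_k})^{1/2}\bigr)=o_{P_0}(n^{-1/2})$. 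This is precisely why the paper organizes the argument around the conditional bias \eqref{eq:2tsigma0bias}, in which the mixed noise term drops out because $\mathrm{E}_0\boldsymbol{\varepsilon}=\boldsymbol{0}$, and the conditional variance \eqref{eq:2tsigma0var}, to which that term contributes only $O_{P_0}(n^{-1})$, followed by Chebyshev. Your other two cross terms (those involving $\widetilde{\boldsymbol{Z}}\boldsymbol{\phi}$) are fine under the geometric-mean bound; with this single repair your proof delivers the stated rate.
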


Let $\boldsymbol{r}=(r_1,\dotsc,r_d)^T$ be such that $\boldsymbol{r}\leq\boldsymbol{m}_{\boldsymbol{\alpha}}$. Then the $\boldsymbol{r}$ mixed partial derivative of $f_{\boldsymbol{\theta}}$ is
\begin{equation}\label{eq:polyprime}
D^{\boldsymbol{r}}f_{\boldsymbol{\theta}}(\boldsymbol{z})=\sum_{\boldsymbol{i}\leq\boldsymbol{m}_{\boldsymbol{\alpha}}}\theta_{\boldsymbol{i}}\prod_{k=1}^d\frac{\partial^{r_k}}{\partial z_k^{r_k}}z_k^{i_k}=\sum_{\boldsymbol{r}\leq\boldsymbol{i}\leq\boldsymbol{m}_{\boldsymbol{\alpha}}}\theta_{\boldsymbol{i}}\frac{\boldsymbol{i}!}{(\boldsymbol{i}-\boldsymbol{r})!}\boldsymbol{z}^{\boldsymbol{i}-\boldsymbol{r}},
\end{equation}
which is a multivariate polynomial of degree $\boldsymbol{m}_{\boldsymbol{\alpha}}-\boldsymbol{r}$. The posterior distributions of $D^{\boldsymbol{r}}f_{\boldsymbol{\theta}}$ can then be induced from \eqref{eq:ptheta}.

Let us define the location of the maximum of $f_{\boldsymbol{\theta}}$ inside the centered region as $\boldsymbol{\mu}_{\boldsymbol{z}}=\argmax_{\boldsymbol{z}\in \mathcal{Q}}f_{\boldsymbol{\theta}}(\boldsymbol{z})$. We then relate this location back to the original domain by $\boldsymbol{\mu}=\widetilde{\boldsymbol{\mu}}+\boldsymbol{\mu}_{\boldsymbol{z}}$ with corresponding maximum value $M=f_{\boldsymbol{\theta}}(\boldsymbol{\mu}_{\boldsymbol{z}})$. Following the same reasoning as in Lemma \ref{lem:uniquemu}, $\boldsymbol{\mu}$ is unique for almost all sample paths of $f_{\boldsymbol{\theta}}$ under the empirical or hierarchical posterior. The following theorem establishes the second stage posterior contraction rates of $\boldsymbol{\mu}$ and $M$ for any smoothness level $\alpha_k>2,k=1,\dotsc,d$, uniformly over $\|f_0\|_{\boldsymbol{\alpha},\infty}\leq R$.

\begin{theorem}\label{th:2M}
For any chosen sequence $\rho_n\to \infty$, let $\delta_{n,k},k=1,\dotsc,d$, be such that $\min_{1\leq k\leq d}\delta_{n,k}=\rho_n\max_{1\leq k\leq d}(\log{n}/n)^{\alpha^{*}(1-\alpha_k^{-1})/(2\alpha^{*}+d)}$. Then under Assumptions 1, 2 and 3, we have uniformly over $\|f_0\|_{\boldsymbol{\alpha},\infty}\leq R$ and for any $m_n\rightarrow\infty$,
\begin{align*}
\mathrm{E}_0\Pi\left[\|\boldsymbol{\mu}-\boldsymbol{\mu}_0\|>m_n\max_{1\leq k\leq d}\delta_{n,k}^{-1}\left(n^{-1/2}+\sum_{l=1}^d\delta_{n,l}^{\alpha_l}\right)\middle|\boldsymbol{Y}\right]\rightarrow0,\\
\mathrm{E}_0\Pi\left[|M-M_0|>m_n\left(n^{-1/2}+\sum_{k=1}^d\delta_{n,k}^{\alpha_k}\right)\middle|\boldsymbol{Y}\right]\rightarrow0.
\end{align*}
In particular, if $\alpha_k>1+\sqrt{1+d/2}$ for all $k=1,\dotsc,d$, then for the choice $\delta_{n,k}=n^{-1/(2\alpha_k)}$,  $k=1,\dotsc,d$, the posterior distributions for $\bm{\mu}$ and $M$ contract at the rates $n^{-(\underline{\alpha}-1)/(2\underline{\alpha})}$ and $n^{-1/2}$ respectively, where $\underline{\alpha}=\min_{1\leq k\leq d}\alpha_k$.
\end{theorem}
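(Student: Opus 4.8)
The plan is to mirror the single-stage argument of Theorem \ref{th:murate}: reduce the errors in $\boldsymbol{\mu}$ and $M$ to supremum-norm errors of $f_{\boldsymbol{\theta}}$ and its first partials over the centred rectangle $\mathcal{Q}$, and then establish sharp sup-norm contraction rates for the second-stage posterior \eqref{eq:ptheta}. Writing $g_0(\boldsymbol{z})=f_0(\widetilde{\boldsymbol{\mu}}+\boldsymbol{z})$ for the centred truth, the analogues of \eqref{eq:muinequality}--\eqref{eq:maxinequality} are $|M-M_0|\le\sup_{\boldsymbol{z}\in\mathcal{Q}}|f_{\boldsymbol{\theta}}(\boldsymbol{z})-g_0(\boldsymbol{z})|$ and $\|\boldsymbol{\mu}-\boldsymbol{\mu}_0\|\le\sqrt{d}\,\lambda_0^{-1}\max_{1\le k\le d}\sup_{\boldsymbol{z}\in\mathcal{Q}}|D_kf_{\boldsymbol{\theta}}(\boldsymbol{z})-D_kg_0(\boldsymbol{z})|$, the latter obtained exactly as in \eqref{eq:muinequality} by comparing the stationary points $\nabla f_{\boldsymbol{\theta}}(\boldsymbol{\mu}_{\boldsymbol{z}})=\boldsymbol{0}$ and $\nabla g_0(\boldsymbol{\mu}_0-\widetilde{\boldsymbol{\mu}})=\boldsymbol{0}$ through a mean-value expansion and the bound $\lambda_{\max}\{\boldsymbol{H}g_0\}<-\lambda_0$ of Assumption 3. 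For these inequalities to be meaningful I first need $\boldsymbol{\mu}_0-\widetilde{\boldsymbol{\mu}}$ to be an interior maximiser of $g_0$ inside $\mathcal{Q}$, and $f_{\boldsymbol{\theta}}$ to have its maximiser $\boldsymbol{\mu}_{\boldsymbol{z}}$ in the interior with negative-definite Hessian there.

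The interior property for $g_0$ follows from the first-stage credible set. By Theorem \ref{th:crmurho}(ii) the set $\overline{\mathcal{C}}_{\boldsymbol{\mu}}$ covers $\boldsymbol{\mu}_0$ with $P_0$-probability tending to one, so $\|\boldsymbol{\mu}_0-\widetilde{\boldsymbol{\mu}}\|_\infty\lesssim\max_k(\log n/n)^{\alpha^{*}(1-\alpha_k^{-1})/(2\alpha^{*}+d)}$; dividing by $\min_k\delta_{n,k}=\rho_n\max_k(\log n/n)^{\alpha^{*}(1-\alpha_k^{-1})/(2\alpha^{*}+d)}$ and using $\rho_n\to\infty$ shows $\|\boldsymbol{\mu}_0-\widetilde{\boldsymbol{\mu}}\|_\infty\ll\min_k\delta_{n,k}$, i.e. $\boldsymbol{\mu}_0-\widetilde{\boldsymbol{\mu}}$ lies well inside $\mathcal{Q}$; this is precisely the role of the inflation $\rho_n\to\infty$ in \eqref{eq:delta}. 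Because $\widetilde{\boldsymbol{\mu}}\to\boldsymbol{\mu}_0$, Assumption 3 transfers to $g_0$ on $\mathcal{Q}$ for large $n$, and since $f_{\boldsymbol{\theta}}$ will be shown to converge to $g_0$ in $C^2(\mathcal{Q})$ under the posterior, the maximiser $\boldsymbol{\mu}_{\boldsymbol{z}}$ is interior with $\boldsymbol{H}f_{\boldsymbol{\theta}}(\boldsymbol{\mu}_{\boldsymbol{z}})$ negative definite on a set of posterior probability tending to one.

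The heart of the argument is the sup-norm rate, and I would exploit the scaling built into the prior covariance $\boldsymbol{V}$ of \eqref{eq:ptheta}. With the diagonal matrix $\boldsymbol{D}_{\boldsymbol{\delta}}=\mathrm{diag}\{\prod_k\delta_{n,k}^{(\boldsymbol{i}_j)_k}\}$ one has $\boldsymbol{V}^{-1}=\boldsymbol{D}_{\boldsymbol{\delta}}^2$ and, writing $\widetilde{\boldsymbol{z}}_i=(z_{i1}/\delta_{n,1},\dots,z_{id}/\delta_{n,d})\in[-1,1]^d$, the design factorises as $\boldsymbol{Z}=\widetilde{\boldsymbol{Z}}\boldsymbol{D}_{\boldsymbol{\delta}}$ with $\widetilde{\boldsymbol{Z}}$ the monomial design on $[-1,1]^d$. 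Hence $(\boldsymbol{Z}^T\boldsymbol{Z}+\boldsymbol{V}^{-1})^{-1}=\boldsymbol{D}_{\boldsymbol{\delta}}^{-1}(\widetilde{\boldsymbol{Z}}^T\widetilde{\boldsymbol{Z}}+\boldsymbol{I})^{-1}\boldsymbol{D}_{\boldsymbol{\delta}}^{-1}$, and since $n_2^{-1}\widetilde{\boldsymbol{Z}}^T\widetilde{\boldsymbol{Z}}$ converges to the positive-definite moment matrix of monomials under the uniform law on $[-1,1]^d$, its eigenvalues are bounded away from $0$ and $\infty$ with probability tending to one, giving $(\widetilde{\boldsymbol{Z}}^T\widetilde{\boldsymbol{Z}}+\boldsymbol{I})^{-1}\asymp n_2^{-1}\boldsymbol{I}$ with $n_2\asymp n$. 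On this fixed-degree polynomial space the supremum over $\mathcal{Q}$ of $D^{\boldsymbol{r}}f_{\boldsymbol{\theta}}$ is, after the same rescaling, comparable to $\prod_k\delta_{n,k}^{-r_k}$ times the Euclidean norm of the rescaled coefficient vector. A bias--variance split of $\widehat{\boldsymbol{\theta}}-\boldsymbol{\theta}_0$, with $\boldsymbol{\theta}_0$ the Taylor coefficients of $g_0$, then yields a noise part of order $\prod_k\delta_{n,k}^{-r_k}\,n^{-1/2}$, an approximation part of order $\prod_k\delta_{n,k}^{-r_k}\sum_l\delta_{n,l}^{\alpha_l}$ from the Taylor remainder $|R(\boldsymbol{z}_i)|\lesssim\sum_k\delta_{n,k}^{\alpha_k}$ via \eqref{eq:isoholder}, a direct-remainder part $\sum_k\delta_{n,k}^{\alpha_k-r_k}$, and a prior part of the smaller order $\prod_k\delta_{n,k}^{-r_k}n^{-1}$; consistency of $\widetilde{\sigma}_{*}^2$ from Proposition \ref{prop:var2} lets the variance factor be taken as $\sigma_0^2$. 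Specialising to $\boldsymbol{r}=\boldsymbol{0}$ gives the rate $n^{-1/2}+\sum_k\delta_{n,k}^{\alpha_k}$ for $\|f_{\boldsymbol{\theta}}-g_0\|_\infty$, and to $\boldsymbol{r}=\boldsymbol{e}_k$ followed by a maximum over $k$ gives $\max_k\delta_{n,k}^{-1}(n^{-1/2}+\sum_l\delta_{n,l}^{\alpha_l})$; feeding these into the two displayed inequalities and integrating the posterior probabilities over the $P_0$-event assembled above (on whose complement the probabilities are trivially at most one) delivers the two contraction statements.

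For the specific rates I would take $\delta_{n,k}=n^{-1/(2\alpha_k)}$, so $\delta_{n,k}^{\alpha_k}=n^{-1/2}$ and $\sum_k\delta_{n,k}^{\alpha_k}\asymp n^{-1/2}$; the general bounds then collapse to $\max_k\delta_{n,k}^{-1}n^{-1/2}=n^{1/(2\underline{\alpha})-1/2}=n^{-(\underline{\alpha}-1)/(2\underline{\alpha})}$ for $\boldsymbol{\mu}$ and to $n^{-1/2}$ for $M$, where $\underline{\alpha}=\min_k\alpha_k$. It remains to check that this choice yields a legitimate credible set, i.e. that $\rho_n=\min_k\delta_{n,k}/\max_k(\log n/n)^{\alpha^{*}(1-\alpha_k^{-1})/(2\alpha^{*}+d)}\to\infty$; ignoring logarithms this reduces to $d<2\alpha^{*}(\underline{\alpha}-2)$, and since $\alpha^{*}\ge\underline{\alpha}$ and $(1+\sqrt{1+d/2})(\sqrt{1+d/2}-1)=d/2$, the hypothesis $\alpha_k>1+\sqrt{1+d/2}$ for every $k$ gives $2\alpha^{*}(\underline{\alpha}-2)\ge2\underline{\alpha}(\underline{\alpha}-2)>d$, as required. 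I expect the main obstacle to be not the rescaling calculation but the two-stage transfer: showing simultaneously, and uniformly over $\|f_0\|_{\boldsymbol{\alpha},\infty}\le R$, that the random centre $\widetilde{\boldsymbol{\mu}}$ places $\boldsymbol{\mu}_0$ strictly inside $\mathcal{Q}$ and that the posterior draw $f_{\boldsymbol{\theta}}$ attains an interior maximum with negative-definite Hessian, so that the gradient-to-mode inequality is genuinely applicable on an event of posterior probability tending to one.
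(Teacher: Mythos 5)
Your proposal is correct and takes essentially the same route as the paper: the same transfer of the inequalities \eqref{eq:muinequality}--\eqref{eq:maxinequality} to the centred rectangle $\mathcal{Q}$, the same bias--variance analysis of the conjugate polynomial posterior (your factorization $\boldsymbol{Z}=\widetilde{\boldsymbol{Z}}\boldsymbol{D}_{\boldsymbol{\delta}}$ with $\boldsymbol{V}^{-1}=\boldsymbol{D}_{\boldsymbol{\delta}}^2$ is exactly the paper's $\boldsymbol{Z}^T\boldsymbol{Z}=n_2\boldsymbol{\Delta A\Delta}$ of Lemma~\ref{lem:random}, stated globally rather than entrywise), the same use of Proposition~\ref{prop:var2} for $\sigma^2$, and the same final algebra reducing the requirement $\epsilon_n=o(\min_{1\leq k\leq d}\delta_{n,k})$ to $\underline{\alpha}>1+\sqrt{1+d/2}$ via $\alpha^{*}\geq\underline{\alpha}$. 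The interior-stationarity point you flag as the main obstacle is handled no more explicitly in the paper (which applies \eqref{eq:muinequality} directly once $P_0(\boldsymbol{\mu}_0-\widetilde{\boldsymbol{\mu}}\in\mathcal{Q})\rightarrow1$), and your sketched resolution---sup-norm closeness of $f_{\boldsymbol{\theta}}$ to the locally strongly concave shifted truth forces the maximizer well inside $\mathcal{Q}$ since $\alpha_k>2$---is sound.
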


Let us take $\delta_{n,k}=n^{-1/(2\alpha_k)}$ the optimal choice suggested above. By comparing this theorem and the single stage contraction rates of Theorem \ref{th:murate}, we can draw the following three main conclusions:
\begin{enumerate}
\item If we perform a Bayesian two-stage procedure, we accelerate contraction rates for estimating $\boldsymbol{\mu}$ and $M$, with $M$ achieving the parametric rate $n^{-1/2}$.
\item At the same time, we remove extra logarithmic factors that are present in the single stage rates.
\item The second stage rates for $\boldsymbol{\mu}$ and $M$ do not depend on $d$ the dimension of the regression function's domain, and the effect of dimension is mitigated to a lower bound $1+\sqrt{1+d/2}$ required on the smoothness at each direction.
\end{enumerate}
The first conclusion says that the second stage posteriors for $\boldsymbol{\mu}$ and $M$ are more concentrated near the truth when compared with their single stage counterparts, and this is evident since the second stage rate of $\boldsymbol{\mu}$ is $n^{-(\underline{\alpha}-1)/(2\underline{\alpha})}\ll(\log{n}/n)^{\alpha^{*}(1-\underline{\alpha}^{-1})/(2\alpha^{*}+d)}$; while for $M$ is $n^{-1/2}\ll(\log{n}/n)^{\alpha^{*}/(2\alpha^{*}+d)}$. As noted above, $M$ has achieved its oracle or the parametric rate. The second stage rate for $\boldsymbol{\mu}$ is sharp, to see this note that if $k$ is the worst direction and we know all components of $\boldsymbol{\mu}$ except the $k$th one, then by analyzing the reduced one-dimensional problem, we find the same rate as well since dimension disappears from the rate. Clearly this is oracle and unbeatable and so the rate is the best possible under anisotropic H\"{o}lder spaces. In the isotropic case where $\alpha_k=\alpha,k=1,\dotsc,d$, the second stage rate for $\boldsymbol{\mu}$ reduces to $n^{-(\alpha-1)/(2\alpha)}$ and this is precisely the minimax rate for this problem under sequential sampling (see \citet{max1988,tsybakov1990b} and \citet{2stage} as mentioned in the introduction).

\begin{remark}\label{rem:credible}\rm
There are other ways to construct credible set and obtain the second stage samples. A first attempt would be to simulate $f$ from its first stage posterior and apply the argmax operator on $f$. However as the posterior of $f$ contracts to $f_0$, this approach does not ensure that the second stage samples are sufficiently spread out, i.e., the distance between samples at direction $k$ is at least some constant multiple of $\delta_{n,k}$ for $k=1,\dotsc,d$. Another way is to do uniform sampling on $\mathcal{C}_{\boldsymbol{\mu}}$ constructed in \eqref{eq:mu credible}, that is, we envelope $\mathcal{C}_{\boldsymbol{\mu}}$, which is possibly irregularly shaped, by the smallest hypercube, do uniform sampling on this cube and discard points that fall outside of $\mathcal{C}_{\boldsymbol{\mu}}$. By (ii) and (iii) of Theorem 5.1, $\underline{\mathcal{C}}_{\boldsymbol{\mu}}\subset\mathcal{C}_{\boldsymbol{\mu}}\subset\overline{\mathcal{C}}_{\boldsymbol{\mu}}$, and samples in $\overline{\mathcal{C}}_{\boldsymbol{\mu}}$ are proportional to those in $\underline{\mathcal{C}}_{\boldsymbol{\mu}}$ under uniform sampling. Hence, the entries of $(\boldsymbol{Z}^T\boldsymbol{Z})^{-1}$ arising from uniform sampling on $\mathcal{C}_{\boldsymbol{\mu}}$ or on hypercubes will have the same order, and the second stage posteriors from these two sampling schemes will have the same asymptotic behavior. Operationally, sampling from $\mathcal{C}_{\boldsymbol{\mu}}$ requires an extra step in deciding whether the sampled points fall in the set or not.
\end{remark}

\begin{remark}\label{rem:prior}\rm
For asymptotic analyses, the prior covariance matrix $\boldsymbol{V}$ plays a minor role as the data ``washes" out the prior. However, for finite samples, the correct specification of $\boldsymbol{V}$ is crucial for the success of our proposed method in practical applications. Through empirical experiments, we discovered that $\boldsymbol{V}$ must reflect the scaling of the space by $\delta_{n,k}$ at direction $k$, and its inverse must have the same structure as $\boldsymbol{Z}^T\boldsymbol{Z}$ so that $(\boldsymbol{Z}^T\boldsymbol{Z}+\boldsymbol{V}^{-1})^{-1}$ will act as an effective shrinkage factor in \eqref{eq:ptheta}. Let us write $\boldsymbol{\Delta}:=\mathrm{diag}\left\{\prod_{k=1}^d\delta_{n,k}^{-(\boldsymbol{i}_j)_k}:j=0,1\ldots,W\right\}$ and we can see that $\boldsymbol{Z}^T\boldsymbol{Z}=\boldsymbol{\Delta A\Delta}$ where $\boldsymbol{A}$ is a matrix of constants not depending on $n$ (see Lemma \ref{lem:random} for more details). Therefore if we center the second stage design points, then we match the structure of $\boldsymbol{Z}^T\boldsymbol{Z}$ by choosing $\boldsymbol{V}=\boldsymbol{\Delta}^2$, which is our default choice. If the points are not centered, we found that Zellner's $g$-prior will work, i.e., $\boldsymbol{V}=g(\boldsymbol{Z}^T\boldsymbol{Z})^{-1}$ where $g$ can be estimated by empirical or hierarchical Bayes methods.
\end{remark}

\section{Simulation study}\label{sec:sim}
We shall compare the performance of our two-stage Bayesian procedure with two other estimation methods: the single-stage Bayesian, and the two-stage frequentist procedure proposed in \citet{2stage}. Consider the following true regression function defined on $[0,1]^2$:
\begin{align*}
f_0(x,y)&=\left(1+e^{-5(2x-1)^2-2(2y-1)^4}\right)[\cos{4(2x-1)}+\cos{5(2y-1)}]
\end{align*}
where the true mode is given by $\boldsymbol{\mu}_0=(0.5,0.5)^T$. In the first stage, we observe $f_0$ on a uniform $30\times30$ grid with i.i.d.~errors distributed as $\mathrm{N}(0,0.01)$ (see Figure \ref{fig:f0xyobs} with black circles as observations). We use bivariate tensor-product of B-splines with normal coefficients as our prior (see Section \ref{sec:tprior}). We choose the pair $(J_1,J_2)$ that maximizes its posterior, i.e., $$\Pi(J_1=j_1,J_2=j_2|\boldsymbol{Y},\sigma=\widetilde{\sigma}_1)\propto\widetilde{\sigma}_1^{-n_1}[\det{(\boldsymbol{B\Omega B}^T+\boldsymbol{I}_{n_1})}]^{-1/2}\Pi(J_1=j_1,J_2=j_2)$$ by integrating out $\boldsymbol{\theta}$. We create a candidate set $\mathcal{J}:=\{1,2,\dotsc,J_{\mathrm{max}}\}\times\{1,\dotsc,J_{\mathrm{max}}\}$ by setting $J_{\mathrm{max}}=20$. We put discrete uniform prior on $(J_1,J_2)$ over $\mathcal{J}$ such that $\Pi(J_1=j_1,J_2=j_2)=J_{\mathrm{max}}^{-2}=1/400$ for any $(j_1,j_2)\in\mathcal{J}$. We then find the combination that gives the maximum $\log{\Pi(J_1=j_1,J_2=j_2|\boldsymbol{Y},\sigma=\widetilde{\sigma}_1)}$ by doing a grid search. To speed up computations, we ignore any constant terms such as the prior factor and the posterior denominator since they do not affect this optimization problem. We plot this marginal log-posterior of $(J_1,J_2)$ in Figure \ref{fig:Jmax} and we found that $J_1=7$ and $J_2=9$ based on this criterion. At each dimension, the B-spline is of order 4 (cubic) with different uniform knot sequence. For the prior parameters, we set $\boldsymbol{\eta}=\boldsymbol{0}$ and $\boldsymbol{\Omega}=\boldsymbol{I}$. Figure \ref{fig:2points} shows the surface of the first stage posterior mean of $f$.

\begin{figure}
\centering
\begin{subfigure}[b]{0.48\textwidth}
\includegraphics[width=\textwidth]{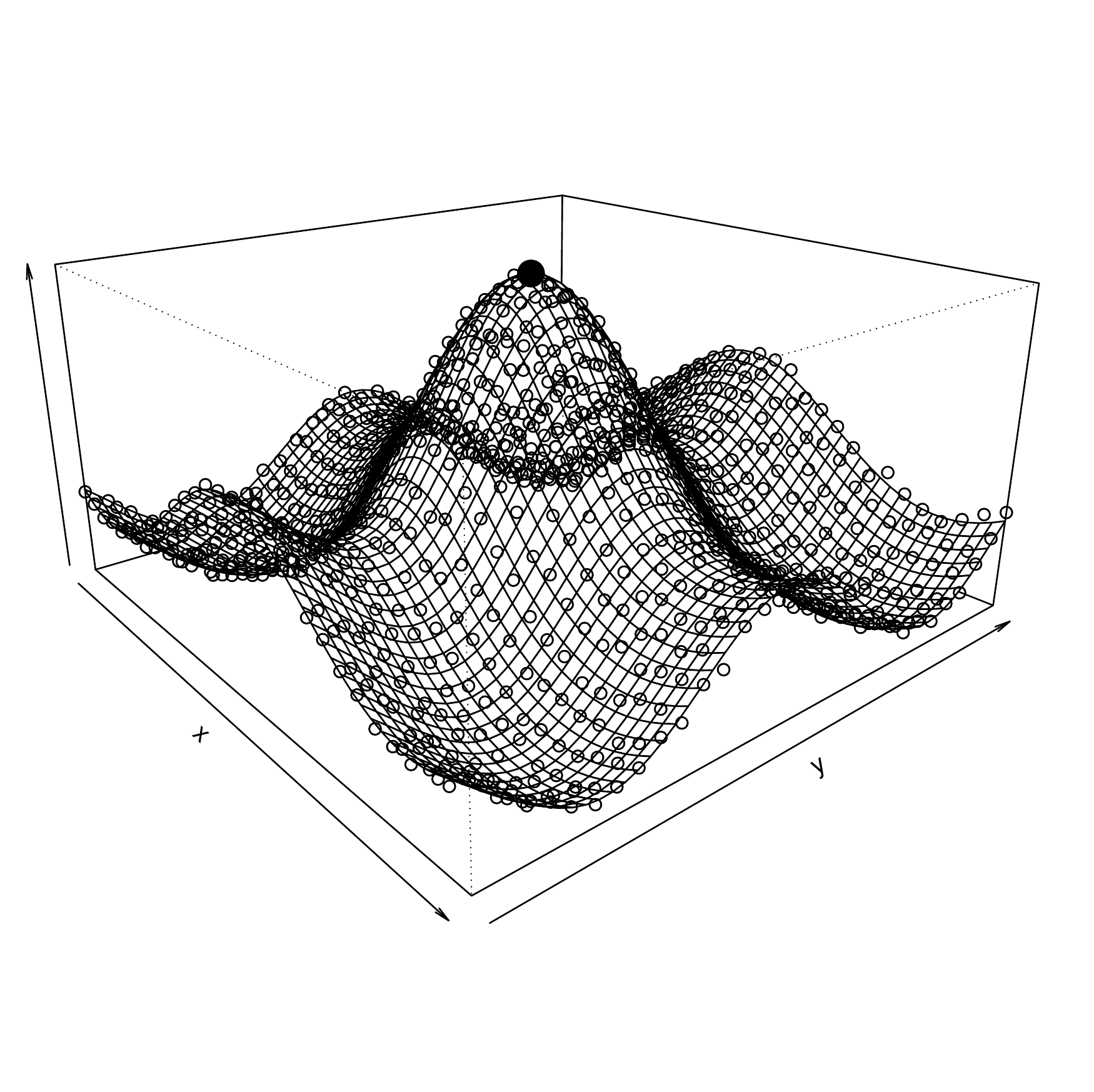}
\caption{A plot of $f_0$, with the black solid point as $f_0(\boldsymbol{\mu}_0)=M_0$ and black circles as the first stage $900$ observations.}
\label{fig:f0xyobs}
\end{subfigure}\quad
\begin{subfigure}[b]{0.48\textwidth}
\includegraphics[width=\textwidth]{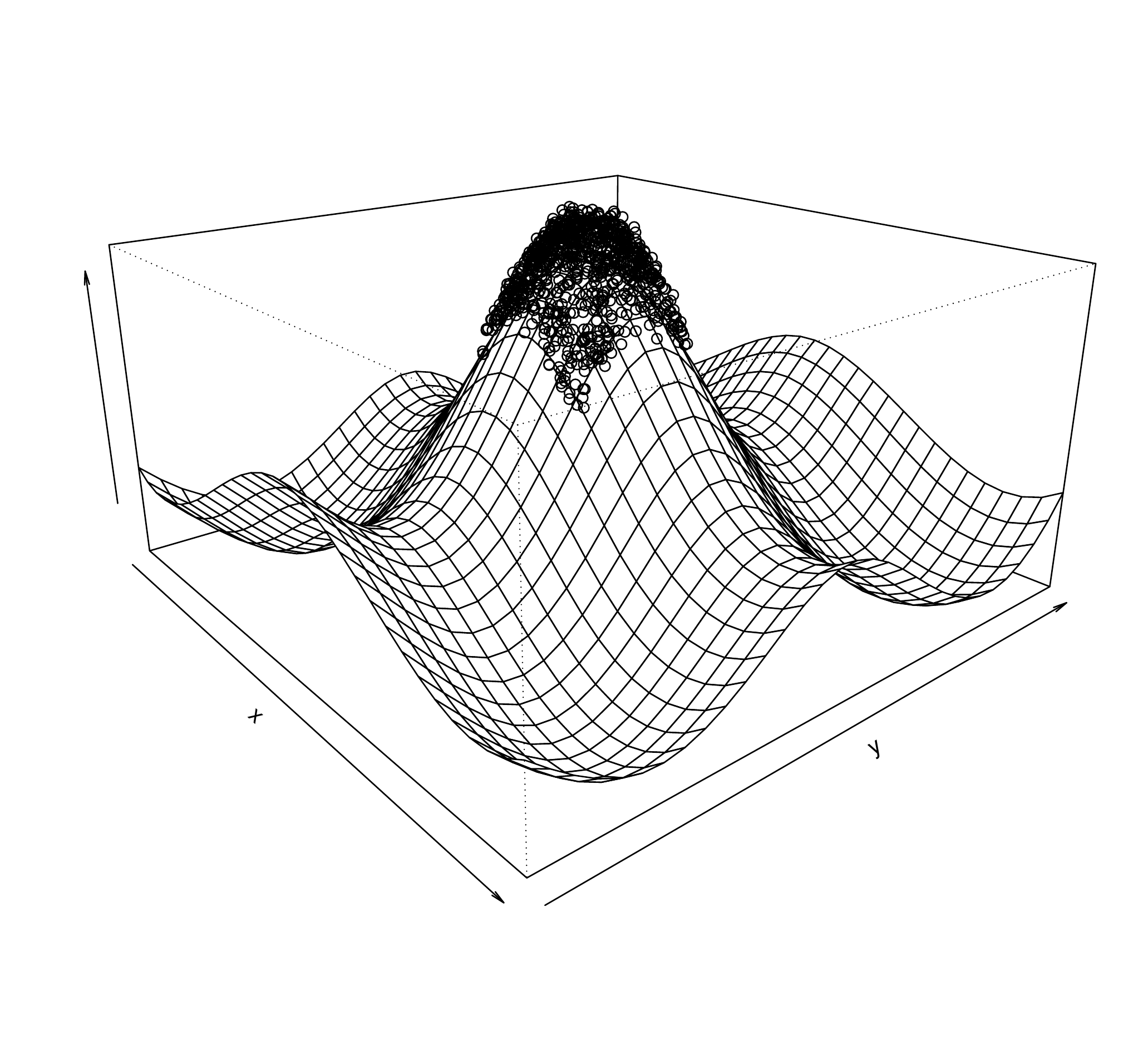}
\caption{Posterior mean based on bivariate B-spline prior, and the black circles are the $864$ second stage samples.}
\label{fig:2points}
\end{subfigure}
\caption{Our proposed Bayesian two-stage procedure: first stage on the left and second stage on the right.}
\end{figure}

\begin{figure}
\centerline{\includegraphics[scale=0.45]{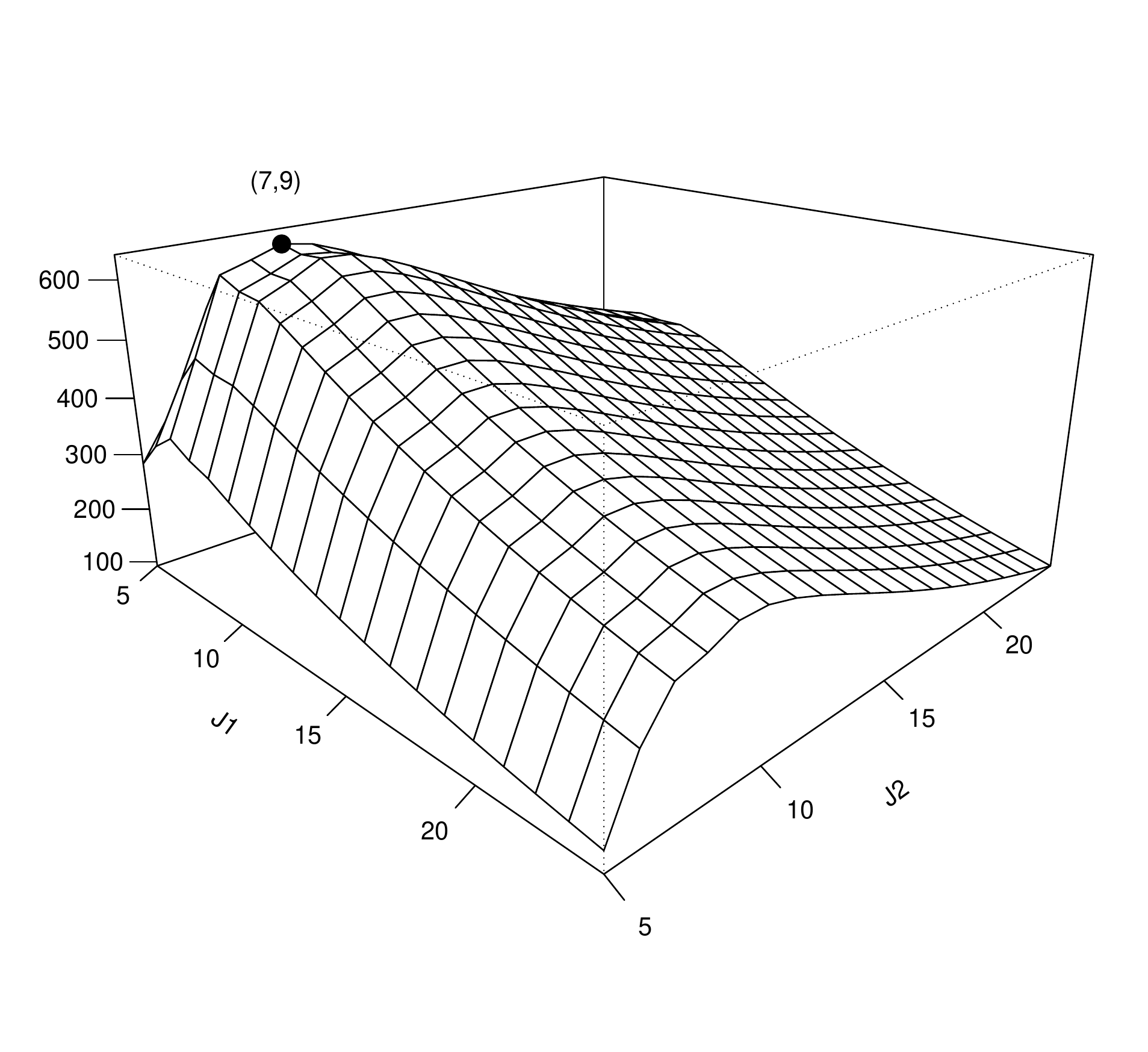}}
\vspace{-20pt}
\caption{log-posterior of $(J_1,J_2)$ with its maximum at $(7,9)$.}
\label{fig:Jmax}
\end{figure}

We sample 864 points uniformly in $\{\boldsymbol{\mu}:|\mu_k-\widetilde{\mu}_k|\leq\delta_k, k=1,2\}$ (see black circles in Figure \ref{fig:2points}) to obtain the second stage data $Y_i,i=1,\dotsc,864$. This number is chosen so as to fulfill the sampling configuration required by the frequentist method explained below, and to ensure that all methods under consideration will use the same amount of samples. To choose $\delta_1,\delta_2$, we first draw $1000$ samples from the first stage posterior distribution of $f$, find the mode for each sample by grid search to yield samples from the first stage posterior of $\boldsymbol{\mu}$, search for the smallest rectangle enveloping these induced $\boldsymbol{\mu}$ samples, and take $\delta_1,\delta_2$ be the lengths of its sides. We found that $\delta_1=0.1111$ and $\delta_2=0.1111$ and we proceed to do uniform sampling across this rectangular region. Note that we do not take the induced $\boldsymbol{\mu}$ samples as our second stage samples because most of them are concentrated near the center, and hence they are not sufficiently spread out (see Remark \ref{rem:credible}).

We center the $864$ sampled design points at the origin by subtracting each of them by $\widetilde{\boldsymbol{\mu}}$, and we use tensor product of quadratic polynomials with normal coefficients as prior (see \eqref{eq:thetaf}). That is for $x,y\in\mathcal{Q}=[-\delta_1,\delta_1]\times [-\delta_2,\delta_2]$,  $f_{\boldsymbol{\theta}}(x,y)=\theta_0+\theta_1x+\theta_2y+\theta_3x^2+\theta_4y^2+\theta_5xy+\theta_6xy^2+\theta_7x^2y+\theta_8x^2y^2$. Since $x,y\in \mathcal{Q}$, we note that the last three columns of the constructed basis matrix $\boldsymbol{Z}$ (corresponding to the last three terms) are very small in magnitude compared with the remaining terms. Thus for numerical simplicity, we consider only
\begin{align*}
f^{*}_{\boldsymbol{\theta}}(x,y)=\theta_0+\theta_1x+\theta_2y+\theta_3x^2+\theta_4y^2+\theta_5xy,
\end{align*}
where $\boldsymbol{\theta}|\sigma^2\sim\mathrm{N}(\boldsymbol{0},\sigma^2\boldsymbol{V})$, with $\boldsymbol{V}=\mathrm{diag}(1,\delta_1^{-2},\delta_2^{-2},\delta_1^{-4},\delta_2^{-4},\delta_1^{-2}\delta_2^{-2})$. We use the empirical Bayes method to estimate $\sigma$ by $\widetilde{\sigma}_{*}$, which is the weighted average of the first and second stage estimates $\widetilde{\sigma}_1,\widetilde{\sigma}_2$. However, we note that in our simulations that $\widetilde{\sigma}_2$ gives a much better estimate than $\widetilde{\sigma}_1$ at the current $(n_1,n_2)$-sampling plan, and since both are valid independent estimates of $\sigma$, we take $\widetilde{\sigma}_{*}=\widetilde{\sigma}_2$. Now, to compute $\boldsymbol{\mu}_{\boldsymbol{z}}=\argmax_{\boldsymbol{z}\in\mathcal{Q}}f^{*}_{\boldsymbol{\theta}}(\boldsymbol{z})$ for a fixed $\boldsymbol{\theta}$, we solve the following system of equation $\nabla f^{*}_{\boldsymbol{\theta}}(\boldsymbol{\mu}_{\boldsymbol{z}})=\boldsymbol{0}$, which is equivalent to solving
\begin{align}\label{eq:2muind}
\begin{pmatrix}
2\theta_3&\theta_5\\
\theta_5 & 2\theta_4
\end{pmatrix}
\begin{pmatrix}
\mu_{\boldsymbol{z},1}\\ \mu_{\boldsymbol{z},2}
\end{pmatrix}
=\begin{pmatrix}
-\theta_1\\
-\theta_2
\end{pmatrix}
\end{align}
for $\boldsymbol{\mu}_{\boldsymbol{z}}=(\mu_{\boldsymbol{z},1},\mu_{\boldsymbol{z},2})^T$. Therefore, to induce the posterior distribution of $\boldsymbol{\mu}$, we draw samples from $\Pi(\boldsymbol{\theta}|\boldsymbol{Y})$ by substituting $\sigma=\widetilde{\sigma}_2$ in \eqref{eq:ptheta}, solving for $\boldsymbol{\mu}_{\boldsymbol{z}}$ using \eqref{eq:2muind} for each sample, and translating back to $\boldsymbol{\mu}=\widetilde{\boldsymbol{\mu}}+\boldsymbol{\mu}_{\boldsymbol{z}}$.

\begin{figure}[ht!]
\centerline{\includegraphics[scale=0.5]{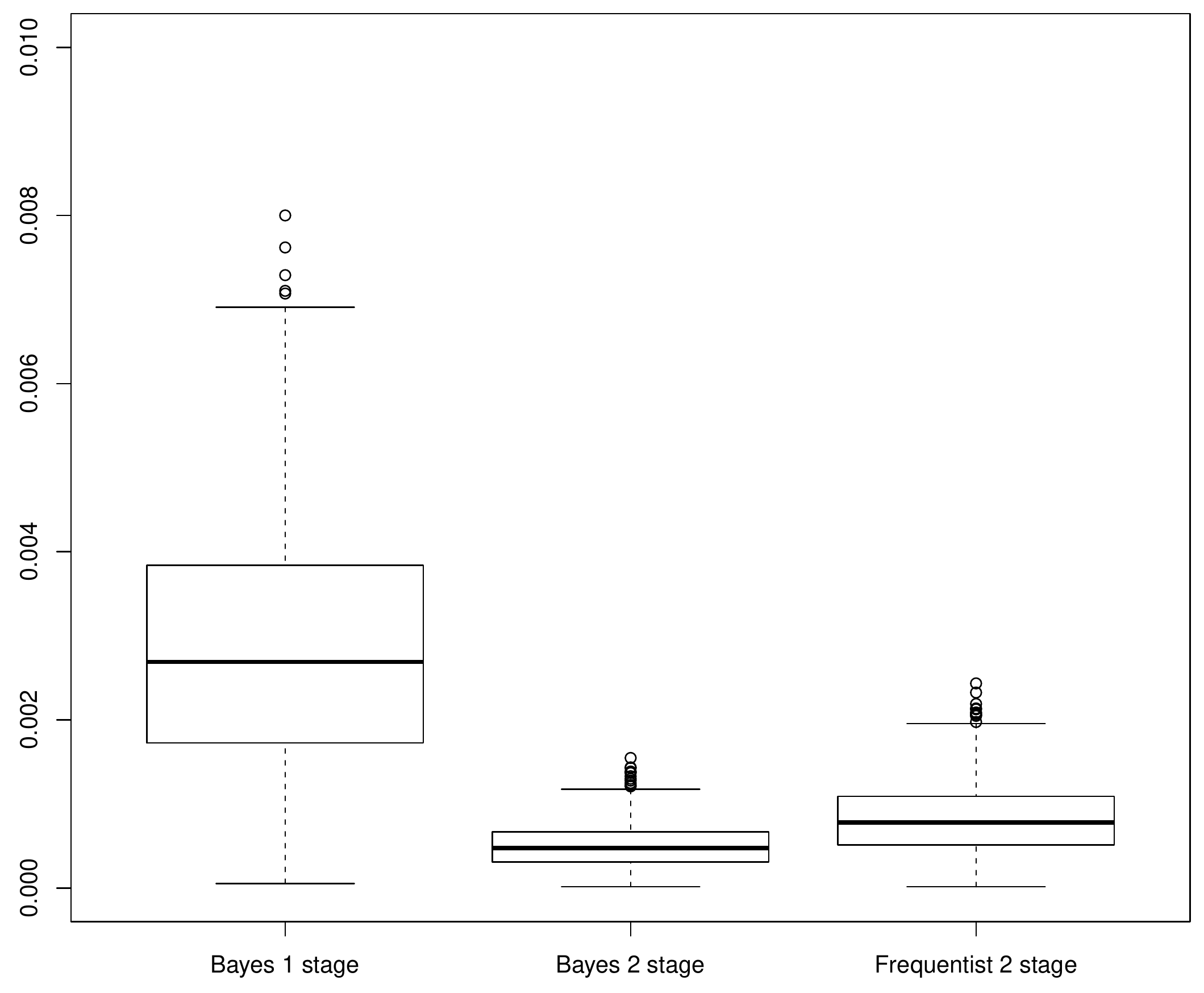}}
\caption{Root mean square errors ($y$-axis) for the Bayesian and frequentist procedures with $1000$ Monte Carlo replications.}
\vspace{-15pt}
\label{fig:result}
\end{figure}

The procedure described is implemented in the statistical software package \texttt{R}. Univariate B-splines are constructed using the \texttt{bs} function from the \texttt{splines} package. We then use \texttt{tensor.prod.model.matrix} from the \texttt{crs} package to form their tensor products. Observe that we have used a total of $30\times30+864=1764$ observations. To show that the two-stage procedure indeed has better accuracy than single stage procedures, we then compare our two-stage procedure with a Bayesian single stage method that uses the same number of samples, i.e., on a uniform $42\times42$ grid points. As in the previous case, we observe $f_0$ at these points with i.i.d.~errors $\mathrm{N}(0,0.01)$. We then use bivariate tensor-product B-splines with normal coefficients as prior with the same setting. We found that $J_1=9, J_2=9$ maximizes its posterior.

To compare Bayesian and frequentist procedures, we repeated the same experiment using the two-stage frequentist procedure implemented in \citet{2stage}. In their procedure, we first fit a locally linear surface by \texttt{loess} regression in \texttt{R} on the first stage design points. We choose the corresponding bandwidth or span parameter to be 0.02 by leave-one-out cross validation. The maximum of this surface serves as a preliminary estimator. We construct a rectangle of size $2\delta_1\times 2\delta_2$ around this estimator and further divide this region into 4 smaller $\delta_1\times \delta_2$ rectangles. In their implementation, \citet{2stage} actually tuned $\delta_1,\delta_2$ using the knowledge of the true maximum, and to the best of our knowledge, there is no practical frequentist method to choose the $\delta$s in the literature. Faced with this situation, we decided to follow \citet{2stage} and choose the $\delta$s by minimizing the expected $L_2$-distance between the second stage posterior mean for $\boldsymbol{\mu}$ and the true $\boldsymbol{\mu}_0$. We found that $\delta_1=0.06$ and $\delta_2=0.06$. Then, we take 96 replicated samples at the 9 grid points to form 864 second stage samples. A quadratic surface is fit through these points and its coefficients are estimated using least squares. We compute the second stage estimator of $\boldsymbol{\mu}_0$ by \eqref{eq:2muind} and call it $\widehat{\boldsymbol{\mu}}_2$.

Let $\widetilde{\boldsymbol{\mu}}_1$ and $\widetilde{\boldsymbol{\mu}}_2$ be the single stage and two-stage posterior means respectively. To compare the performance of our proposed Bayesian two-stage method with the other two, we replicate the experiment $1000$ times for each of the three methods. For each replicated experiment, we compute $\|\boldsymbol{\mu}-\boldsymbol{\mu}_0\|$ for each $\boldsymbol{\mu}=\widetilde{\boldsymbol{\mu}}_1,\widetilde{\boldsymbol{\mu}}_2,\widehat{\boldsymbol{\mu}}_2$. Figure \ref{fig:result} shows the box-plots of these $1000$ computed root mean square errors (RMSE) for all three procedures.

We see that the our proposed Bayesian two-stage procedure has considerably lower RMSE than the corresponding single stage procedure, and thus supports the conclusion of Theorem \ref{th:2M} in finite sample setting. We also observe that our proposed Bayesian two-stage procedure performs slightly better than the frequentist procedure, despite the fact that we have used the true maximum to tune the frequentist procedure while our proposed Bayesian two-stage method is fully data driven. The superior performance may be due to our choice of prior covariance matrix $\boldsymbol{V}$ (see Remark \ref{rem:prior}) where it causes $(\boldsymbol{Z}^T\boldsymbol{Z}+\boldsymbol{V}^{-1})^{-1}$ in the posterior (see \eqref{eq:ptheta}) to act as an effective shrinkage factor.

The \texttt{R} codes to reproduce all the results and figures in this section can be found in the first author's GitHub https://github.com/wwyoo/Bayes-two-stage.

\section{Conclusion and future works}\label{sec:conclude}
We studied Bayesian estimation of $\boldsymbol{\mu}$ and $M$ in two different settings. In the usual single stage situation, we obtained posterior contraction rates of $(\log{n}/n)^{\alpha^{*}\{1-\underline{\alpha}^{-1}\}/(2\alpha^{*}+d)}$ for $\boldsymbol{\mu}$ and $(\log{n}/n)^{\alpha^{*}/(2\alpha^{*}+d)}$ for $M$, under a tensor-product B-splines random series prior with Gaussian coefficients. Using our proposed Bayesian two-stage procedure, we can accelerate the aforementioned rates to $n^{-(\underline{\alpha}-1)/(2\underline{\alpha})}$ and $n^{-1/2}$ for $\boldsymbol{\mu}$ and $M$ respectively, as long as the optimal $\delta_{n,k}=n^{-1/(2\alpha_k)}$ is chosen as radius for the credible cube used in second stage sampling. This rate acceleration is remarkable because it removes the logarithmic factors and it mitigates the effect of dimension $d$ on the rates. We implemented a practical version of our Bayesian two-stage procedure in a simulation study, and it outperformed a traditional single stage Bayesian procedure by a large margin, and also performed slightly better compared to a frequentist procedure recently proposed in the literature.

An important future work for the single and two-stage Bayesian procedures is to make them adaptive to the unknown smoothness $\boldsymbol{\alpha}$. In other words, designing theoretically sound and data driven procedures to determine the optimal $J_k$ (number of B-splines) and $\delta_{n,k}$ (credible cube radius) for $k=1,\dotsc,d$. If only $L_2$-distances are studied, finite random series easily gives adaptation by simply putting a prior on $J_k$ the number of basis functions. For supremum $L_{\infty}$-distance as considered in this paper, getting adaptive posterior contraction rate is a lot more challenging and seems to need a different type of prior (see \citet{yoo2017}). Coverage of uniform norm credible sets in the adaptive setting may even need a more radical technique (cf.~\citet{yoo2018}).

For two-stage procedures, rate adaptation has not been yet possible even for the non-Bayesian procedure of \citet{2stage}. At the minimum, to adapt, one may need multi-stage (possibly more than 2) sampling. In our simulation study, the empirical Bayes step used in determining $J_k$ and the sampling based procedure to choose $\delta_1,\delta_2$ are practical and fast plug-in methods, but it is yet unknown how estimation uncertainty introduced by plugging-in these estimated quantities propagate throughout the two-stage procedure, and whether this is an optimal thing to do, even though they do give reasonable finite sample results. We shall try to answer these pressing questions in some future work.

\section*{Acknowledgement} We would like to thank the associate editor and the referees for their thought-provoking comments, as this led us to re-examine our results and improve our presentation.

\section{Proofs}\label{sec:proof}
Recall that the posterior mean of $D^{\boldsymbol{r}}f$ is  $D^{\boldsymbol{r}}\widetilde{f}:=\boldsymbol{A}_{\boldsymbol{r}}\boldsymbol{Y}+\boldsymbol{c}_{\boldsymbol{r}}\boldsymbol{\eta}$. Let us write the posterior contraction rate of $\boldsymbol{\mu}$ given in Theorem \ref{th:murate} as $\epsilon_n=\max_{1\leq k\leq d}\epsilon_{n,k}$, where $\epsilon_{n,k}=(\log{n}/n)^{\alpha^{*}(1-\alpha_k^{-1})/(2\alpha^{*}+d)}$.

\begin{proof}[Proof of Lemma \ref{lem:uniquemu}]
\noindent For the empirical Bayes case, the reproducing kernel Hilbert space of the Gaussian posterior process $\{f(\boldsymbol{t}):\boldsymbol{t}\in[0,1]^d\}$ given $\boldsymbol{Y}$ is the $J$-dimensional space of polynomial splines spanned by elements of $\boldsymbol{b}_{\boldsymbol{J},\boldsymbol{q}}(\cdot)$ (see Theorem 4.2 of \citet{van2008reproducing}). This implies that it has continuous sample path at every realization. Observe that $\widetilde{\sigma}_n^2>0$ almost surely. Then if
\begin{align*}
0&=\mathrm{Var}(f(\boldsymbol{t})-f(\boldsymbol{s})|\boldsymbol{Y},\sigma=\widetilde{\sigma}_n)\\
&=\widetilde{\sigma}_n^2(\boldsymbol{b}_{\boldsymbol{J},\boldsymbol{q}}(\boldsymbol{t})
-\boldsymbol{b}_{\boldsymbol{J},\boldsymbol{q}}(\boldsymbol{s}))^T\left(\boldsymbol{B}^T\boldsymbol{B}+\boldsymbol{\Omega}^{-1}\right)^{-1}
(\boldsymbol{b}_{\boldsymbol{J},\boldsymbol{q}}(\boldsymbol{t})-\boldsymbol{b}_{\boldsymbol{J},\boldsymbol{q}}(\boldsymbol{s})),
\end{align*}
this implies that $B_{j_k,q_k}(t_k)=B_{j_k,q_k}(s_k)$ for $j_k=1,\dotsc,J_k$, and $k=1,\dotsc,d$. Since $q_k\geq\alpha_k>2$ for $k=1,\dotsc,d$, this rules out the possibility that the B-splines are step functions and further implies that $\boldsymbol{t}=\boldsymbol{s}$. Thus by Lemma 2.6 of \citet{gaussianunique}, $\boldsymbol{\mu}$ is unique for almost all sample paths of $\{f(\boldsymbol{t}):\boldsymbol{t}\in[0,1]^d\}$ under the empirical posterior distribution.

For hierarchical Bayes, consider the conditional posterior process $\Pi(f|\boldsymbol{Y},\sigma)$ for arbitrary $\sigma>0$. The reproducing kernel Hilbert space of this Gaussian process is still the space of splines as before, and consequently has continuous sample path for each realization. Now by substituting $\sigma$ for $\widetilde{\sigma}_n$ in the preceding display, we see that $\mathrm{Var}(f(\boldsymbol{t})-f(\boldsymbol{s})|\boldsymbol{Y},\sigma)=0$ implies $\boldsymbol{t}=\boldsymbol{s}$. Again by Lemma 2.6 of \citet{gaussianunique}, almost every sample path of $f$ given $\sigma$ has unique maximum for any $\sigma>0$. Note that $f$ is generated from the following scheme: draw $\sigma\sim\Pi(\sigma|\boldsymbol{Y})$, and then $f|\sigma\sim\mathrm{GP}(\boldsymbol{AY}+\boldsymbol{c\eta},\sigma^2\boldsymbol{\Sigma})$. Hence almost every draw of $f$ has unique maximum $\boldsymbol{\mu}$ under $\Pi(f|\boldsymbol{Y})$.
\end{proof}

\begin{proof}[Proof of Theorem~\ref{th:murate}]
If both $f,f_0\geq0$, then \eqref{eq:maxinequality} follows from the reverse triangle inequality by noting that $M=\|f\|_\infty$ and $M_0=\|f_0\|_\infty$ in this case. If the condition fails to hold, we can add a large enough constant $C>0$ such that $g=f+C\geq0$ and $g_0=f_0+C\geq0$. Then by the reverse triangle inequality, $|\|g\|_\infty-\|g_0\|_\infty|\leq\|g-g_0\|_\infty$. The right hand side is $\|f-f_0\|_\infty$, while the left hand side is $|M-M_0|$ because $\|g\|_\infty=M+C,\|g_0\|_\infty=M_0+C$.

To prove \eqref{eq:muinequality}, we need to first establish consistency of the induced posterior of $\bm{\mu}$. Now for any $\epsilon>0$ and $\delta>0$, $\Pi\left(\|\bm{\mu}-\bm{\mu}_0\|>\epsilon\middle|\bm{Y}\right)$ is bounded above by
\begin{align*}
&\Pi\left(\sup_{\boldsymbol{x}\notin\mathcal{B}(\boldsymbol{\mu}_0,\epsilon)} f(\bx)>f(\bm{\mu}_0)\middle|\bm{Y}\right)\\
&\leq\Pi\left(\sup_{\boldsymbol{x}\notin\mathcal{B}(\boldsymbol{\mu}_0,\epsilon)}f(\bx)>f(\bm{\mu}_0),\quad f(\bm{\mu}_0)\geq f_0(\bm{\mu}_0)-\delta/2\middle|\bm{Y}\right)\\
&\qquad+\Pi\left(f(\bm{\mu}_0)< f_0(\bm{\mu}_0)-\delta/2\middle|\bm{Y}\right).
\end{align*}
The second term is bounded above by $\Pi(|f(\boldsymbol{\mu}_0)-f_0(\boldsymbol{\mu}_0)|>\delta/2|\boldsymbol{Y})\leq\Pi(\|f-f_0\|_\infty>\delta/2|\boldsymbol{Y})$, and this goes to 0 in $P_0$-probability by Theorem \ref{th:frate} in Section \ref{sec:appendix} Appendix with $\boldsymbol{r}=\boldsymbol{0}$. The well-separation property of Assumption 2 implies that there exists a $\delta>0$, such that $f_0(\bx)<f_0(\bm{\mu}_0)-\delta$ for $\boldsymbol{x}\notin\mathcal{B}(\boldsymbol{\mu}_0,\epsilon)$. Hence, for this $\delta>0$ and appealing again to Theorem \ref{th:frate}, the first term is bounded above by
$$\Pi\left(\bigcup_{\boldsymbol{x}\notin\mathcal{B}(\boldsymbol{\mu}_0,\epsilon)} \left \{f(\bx)>f_0(\bx)+\frac{\delta}{2}\right\} \middle|\bm{Y}\right)
\leq\Pi\left(\|f-f_0\|_\infty>\frac{\delta}{2}\middle|\bm{Y}\right)\rightarrow0$$
in $P_0$-probability as $n\rightarrow\infty$. Thus the posterior distribution of $\bm{\mu}$ is consistent at $\bm{\mu}_0$.

Now by Taylor's theorem,
$\nabla f_0(\boldsymbol{\mu})=\nabla f_0(\boldsymbol{\mu}_0)+\boldsymbol{H}f_0(\boldsymbol{\mu}^{*})(\boldsymbol{\mu}-\boldsymbol{\mu}_0)$
for some $\boldsymbol{\mu}^{*}=\lambda\boldsymbol{\mu}+(1-\lambda)\boldsymbol{\mu}_0$ with $\lambda\in(0,1)$. Since the posterior of $\boldsymbol{\mu}$ is consistent as shown above and $\boldsymbol{\mu}^{*}$ falls in between $\boldsymbol{\mu}$ and $\boldsymbol{\mu}_0$, it must be that as $n\rightarrow\infty$ and for any $\epsilon>0$, $\Pi(\|\boldsymbol{\mu}^{*}-\boldsymbol{\mu}_0\|\leq\epsilon|\boldsymbol{Y})\overset{P_0}{\longrightarrow}1$. Let us introduce the set $\mathcal{B}=\{\lambda_\text{max}[\boldsymbol{H}f_0(\boldsymbol{\mu}^{*})]<-\lambda_0\}$, and note that under Assumptions 3, $\Pi(\mathcal{B}|\boldsymbol{Y})\overset{P_0}{\longrightarrow}1$ and $\boldsymbol{H}f_0(\boldsymbol{\mu}^{*})$ is invertible with posterior probability tending to one. Therefore, as $n\rightarrow\infty$ and intersecting with $\mathcal{B}$,
$\boldsymbol{\mu}-\boldsymbol{\mu}_0=\boldsymbol{H}f_0(\boldsymbol{\mu}^{*})^{-1}(\nabla f_0(\boldsymbol{\mu})-\nabla f_0(\boldsymbol{\mu}_0)).$
Noting that $\nabla f_0(\boldsymbol{\mu}_0)=\nabla f(\boldsymbol{\mu})=\boldsymbol{0}$ by Assumption 2, then
\begin{align*}
\|\boldsymbol{\mu}-\boldsymbol{\mu}_0\|\leq\frac{1}{\lambda_0}\|\nabla f_0(\boldsymbol{\mu})-\nabla f(\boldsymbol{\mu})\|\leq\frac{\sqrt{d}}{\lambda_0}\max_{1\leq k\leq d}\|D_kf-D_kf_0\|_\infty,
\end{align*}
where we have used the sub-multiplicative property of the $\|\cdot\|_{(2,2)}$-norm, i.e., $\|\boldsymbol{Ay}\|\leq\|\boldsymbol{A}\|_{(2,2)}\|\boldsymbol{y}\|$ for some matrix $\boldsymbol{A}$ and vector $\boldsymbol{y}$.

Theorem \ref{th:frate} with $\boldsymbol{r}=\boldsymbol{0}$ together with \eqref{eq:maxinequality} now proves the desired contraction rate on $M$. To derive the rate \eqref{eq:rate mu} for $\bm{\mu}$, apply again Theorem \ref{th:frate} with $\boldsymbol{r}=\boldsymbol{e}_k$, and the $L_\infty$-contraction rate for $D_kf$ is $\epsilon_{n,k}, k=1,\dotsc,d$. In view of \eqref{eq:muinequality}, we have for any $m_n\rightarrow\infty$,
$$
\mathrm{E}_0\Pi(\|\boldsymbol{\mu}-\boldsymbol{\mu}_0\|>m_n\epsilon_n|\boldsymbol{Y})
\le \sum_{k=1}^d\mathrm{E}_0\Pi\left(\|D_kf-D_kf_0\|_\infty>\frac{\lambda_0}{\sqrt{d}}m_n\epsilon_n\middle|\boldsymbol{Y}\right)$$
approaches $0$ uniformly in $\|f_0\|_{\boldsymbol{\alpha},\infty}\leq R$, establishing the assertion.
\end{proof}

\begin{proof}[Proof of Corollary 4.2]
From the proof of Theorem 4.1 before, we know that $\boldsymbol{\mu}-\boldsymbol{\mu}_0=\boldsymbol{H}f_0(\boldsymbol{\mu}^{*})^{-1}(\nabla f_0(\boldsymbol{\mu})-\nabla f_0(\boldsymbol{\mu}_0))$. Noting that $\nabla f_0(\boldsymbol{\mu}_0)=\nabla f(\boldsymbol{\mu})=\boldsymbol{0}$ by Assumption 2, we can use the fact $\|\boldsymbol{Ab}\|^2\geq\lambda_{\mathrm{min}}(\boldsymbol{A}^T\boldsymbol{A})\|\boldsymbol{b}\|^2$ to write
\begin{align*}
\|\boldsymbol{\mu}-\boldsymbol{\mu}_0\|&\geq\sqrt{\lambda_{\mathrm{max}}^{-2}\{\boldsymbol{H}f_0(\boldsymbol{\mu}^{*})\}}\|\nabla f_0(\boldsymbol{\mu})-\nabla f_0(\boldsymbol{\mu}_0)\|\\
&\geq\lambda_1^{-1}\|\nabla f_0(\boldsymbol{\mu})-\nabla f(\boldsymbol{\mu})\|,
\end{align*}
by posterior consistency of $\boldsymbol{\mu}^{*}$ as established in the proof of Theorem 5.2. Let $\delta_n\to0$ be some sequence. Then for some small enough constant $h>0$ to be determined below, we have
\begin{align*}
\Pi(\|\boldsymbol{\mu}-\boldsymbol{\mu}_0\|\leq h\epsilon_n|\boldsymbol{Y})&\leq\Pi\left(\|\nabla f_0(\boldsymbol{\mu})-\nabla f(\boldsymbol{\mu})\|\leq\lambda_1h\epsilon_n,\|\boldsymbol{\mu}-\boldsymbol{\mu}_0\|\leq\delta_n\middle|\boldsymbol{Y}\right)\\
&\qquad+\Pi(\|\boldsymbol{\mu}-\boldsymbol{\mu}_0\|>\delta_n|\boldsymbol{Y}).
\end{align*}
Since the posterior of $\boldsymbol{\mu}$ is consistent, the second term is $o_{P_0}(1)$. Using the definition of continuity of $\boldsymbol{x}\mapsto\|\nabla f_0(\boldsymbol{x})-\nabla f(\boldsymbol{x})\|$ at $\boldsymbol{\mu}_0$ and by taking $n$ large enough (so that $\delta_n$ is small enough), we see that
\begin{align*}
\Pi(\|\boldsymbol{\mu}-\boldsymbol{\mu}_0\|\leq h\epsilon_n|\boldsymbol{Y})\leq\Pi\left[\|\nabla f_0(\boldsymbol{\mu}_0)-\nabla f(\boldsymbol{\mu}_0)\|\leq2\lambda_1h\epsilon_n\middle|\boldsymbol{Y}\right]+o_{P_0}(1).
\end{align*}
To obtain the same rate as the upper bound presented in (4.3) of Theorem 4.1, we then need the lower bound point-wise version of Theorem 9.1, namely for some constant $m_0>0$ and for any $\boldsymbol{x}\in[0,1]^d$,
\begin{align}\label{eq:flow}
\sup_{\|f_0\|_{\boldsymbol{\alpha},\infty}\leq R}\mathrm{E}_0\Pi\left(|D^{\boldsymbol{r}}f(\boldsymbol{x})-D^{\boldsymbol{r}}f_0(\boldsymbol{x})|\leq m_0n^{-\alpha^{*}\{1-\sum_{k=1}^d(r_k/\alpha_k)\}/(2\alpha^{*}+d)}\middle|\boldsymbol{Y}\right)\rightarrow0.
\end{align}
One can proceed to establish such lower bound directly since we have analytical expression for the Gaussian posterior distribution. By taking $\boldsymbol{r}=\boldsymbol{e}_k$ and $h\leq m_0/(2\lambda_1)$, we conclude that $\epsilon_n^2=\sum_{k=1}^dn^{-2\alpha^{*}(1-\alpha_k^{-1})/(2\alpha^{*}+d)}\geq\max_{1\leq k\leq d}n^{-2\alpha^{*}(1-\alpha_k^{-1})/(2\alpha^{*}+d)}$. As a result, if one adds an extra lower bound assumption (4.5), we have the lower bound:
\begin{align*}
\mathrm{E}_0\Pi\left(\|\boldsymbol{\mu}-\boldsymbol{\mu}_0\|\geq hn^{-\alpha^{*}\{1-(\min_{1\leq k\leq d}\alpha_k)^{-1}\}/(2\alpha^{*}+d)}\middle|\boldsymbol{Y}\right)\rightarrow1,
\end{align*}
for a small enough constant $h>0$. For the posterior lower bound of $M$, let $\boldsymbol{\mu}^{*}$ be some point in between $\boldsymbol{\mu}$ and $\boldsymbol{\mu}_0$. We Taylor expand $f_0$ around $\boldsymbol{\mu}_0$, add and subtract $M$ and use the reverse triangle inequality to write
\begin{align*}
|M_0-M|&\geq|f_0(\boldsymbol{\mu})-f(\boldsymbol{\mu})|+0.5(\boldsymbol{\mu}-\boldsymbol{\mu}_0)^T\boldsymbol{H}f_0(\boldsymbol{\mu}^{*})
(\boldsymbol{\mu}-\boldsymbol{\mu}_0)\\
&\geq|f_0(\boldsymbol{\mu})-f(\boldsymbol{\mu})|-0.5\lambda_1\|\boldsymbol{\mu}-\boldsymbol{\mu}_0\|^2,
\end{align*}
by the extra assumption and posterior consistency of $\boldsymbol{\mu}^{*}$. Choose $m_n=\sqrt{\log\log{n}}$ and define the set $\mathcal{T}:=\{\|\boldsymbol{\mu}-\boldsymbol{\mu}_0\|\leq m_n\epsilon_n\}$. Then for $\omega_n:=n^{-\alpha^{*}/(2\alpha^{*}+d)}$ and a small enough constant $h>0$ to be determined below,
\begin{align*}
\Pi(|M_0-M|\leq h\omega_n|\boldsymbol{Y})&\leq\Pi\left(|f_0(\boldsymbol{\mu})-f(\boldsymbol{\mu})|-0.5\lambda_1
\|\boldsymbol{\mu}-\boldsymbol{\mu}_0\|^2\leq h\omega_n,\mathcal{T}|\boldsymbol{Y}\right)\\
&\qquad+\Pi(\mathcal{T}^c|\boldsymbol{Y})\\
&\leq\Pi(|f_0(\boldsymbol{\mu})-f(\boldsymbol{\mu})|\leq h\omega_n+0.5\lambda_1m_n^2\epsilon_n^2|\boldsymbol{Y})+o_{P_0}(1),
\end{align*}
where the last term follows from (4.3) of Theorem 4.1. Using the continuity argument as before for $\boldsymbol{x}\mapsto|f_0(\boldsymbol{x})-f(\boldsymbol{x})|$ and the fact that $h\omega_n\gg\lambda_1m_n^2\epsilon_n^2$ when $\min_{1\leq k\leq d}\alpha_k>2$, we can further bound the right hand side above by
\begin{align*}
\Pi(|f_0(\boldsymbol{\mu})-f(\boldsymbol{\mu})|\leq2h\omega_n|\boldsymbol{Y})+o_{P_0}(1),
\end{align*}
for large enough $n$. By setting $\boldsymbol{r}=\boldsymbol{0}$ in \eqref{eq:flow} above, we conclude that the first term is $o_{P_0}(1)$ when $h\leq m_0/2$ and the second posterior statement on $M$ is established.
\end{proof}

\begin{proof}[Proof of Theorem \ref{th:crmurho}]
By construction, $\mathcal{C}_{\bm{\mu}}$ contains $\widetilde{\bm{\mu}}$, the mode of the posterior mean $\widetilde{f}$ of $f$. Since $\gamma<1/2$, $R_{n,k,\gamma}$ is greater than the posterior median of $\|D_k f- D_k \widetilde f\|_\infty$. For the empirical Bayesian posterior, $D_k f- D_k\widetilde f$ is a Gaussian process under Assumption 1 and in view of the second assertion of Theorem \ref{th:fcred} in the Appendix, it follows that $R_{n,k,\gamma}\gtrsim\epsilon_{n,k}$ (with $\boldsymbol{r}=\boldsymbol{e}_k$). Define $\nu_{n,k}^2:=\sup_{\bm{x}\in[0,1]^d} \mathrm{var} (D_k f (\bm{x})- D_k \widetilde f(\bm{x})|\bm{Y})$. Since the empirical Bayes estimate $\widetilde{\sigma}_n^2$ is consistent in view of (a) in Proposition \ref{prop:var1}, then by applying the inequality $\boldsymbol{y}^T\boldsymbol{Ay}\leq\lambda_{\mathrm{max}}(\boldsymbol{A})\|\boldsymbol{y}\|^2$ for any square matrix $\boldsymbol{A}$, we can bound $\mathrm{var} (D_k f (\bm{x})- D_k \widetilde f(\bm{x})|\bm{Y})$ with expression given in \eqref{eq:tsigmar} by
\begin{align*}
&(\sigma_0^2+o_{P_0}(1))\lambda_{\mathrm{min}}^{-1}(\boldsymbol{B}^T\boldsymbol{B}+\boldsymbol{\Omega}^{-1})
\lambda_{\mathrm{max}}(\boldsymbol{W}_{\boldsymbol{e}_k}\boldsymbol{W}_{\boldsymbol{e}_k}^T)\|\boldsymbol{b}_{\boldsymbol{J},\boldsymbol{q}-\boldsymbol{e}_k}(\boldsymbol{x})\|^2\\
&\qquad\lesssim n^{-1}J_k^2\prod_{l=1}^dJ_l\lesssim n^{-2\alpha^{*}(1-\alpha_k^{-1})/(2\alpha^{*}+d)},
\end{align*}
for any $\boldsymbol{x}\in[0,1]^d$. In the above, we have used the fact $0\leq B_{j_l,q_l}(x_l)\leq1$ and the partition of unity property of B-splines $\sum_{\boldsymbol{j}}\prod_{l=1}^dB_{j_l,q_l}(x_l)=1$ to bound $\|\boldsymbol{b}_{\boldsymbol{J},\boldsymbol{q}-\boldsymbol{e}_k}(\boldsymbol{x})\|^2\leq\sum_{j_1=1}^{J_1}\cdots\sum_{j_d=1}^{J_d}\prod_{l=1}^dB_{j_l,q_l-\mathbbm{1}_{\{l=k\}}}(x_l)\leq1$ for any $\boldsymbol{x}\in[0,1]^d$. The eigenvalues were bounded by \eqref{eq:BBO} of Lemma \ref{lem:BB} and Lemma \ref{lem:wr} (with $\boldsymbol{r}=\boldsymbol{e}_k$). Thus, we conclude that $\nu_{n,k}^2=o(\epsilon_{n,k}^2)$. Consequently by Borell's inequality (cf.~Proposition~A.2.1 of \citet{empirical}) and taking $\rho$ to be large enough, e.g. $\rho>1$,
\begin{eqnarray*}
\Pi (\bm{\mu} \not\in \mathcal{C}_{\bm{\mu}}|\bm{Y}) &\leq
\sum_{k=1}^d \Pi ( \|D_k f- D_k \widetilde f\|_\infty >\rho R_{n,k,\gamma}|\bm{Y})\\
&\leq d \max_{1\leq k\leq d}\exp [ -(\rho-1)^2 R_{n,k,\gamma}^2/(2\nu_{n,k}^2)]
\end{eqnarray*}
which goes to zero since $R_{n,k,\gamma}^2/\nu_{n,k}^2\rightarrow\infty$. Thus the credibility of $\mathcal{C}_{\bm{\mu}}$ tends to $1$ (or is at least $1-\gamma$) in $P_0$-probability as $n\rightarrow\infty$. For the hierarchical Bayesian posterior, the same conclusion follows since conditionally on $\sigma$, the posterior law obeys a Gaussian process and $\sigma$ lies in a small neighborhood of the true $\sigma_0$ with high posterior probability (from (c) of Proposition \ref{prop:var1}). To ensure coverage, note that by the construction of $\mathcal{C}_{\bm{\mu}}$, it contains $\bm{\mu}_0$ if $\|D_k f_0- D_k \widetilde f\|_\infty\le \rho R_{n,k,\gamma}$ for all $k=1,\ldots,d$. When $f_0$ is the true regression function, the $P_0$-probability of the last event tends to one for all $k$ by Theorem \ref{th:fcred} with $\boldsymbol{r}=\boldsymbol{e}_k$, and hence the statement on coverage is established. This proves assertion (i).

Assertion (ii) follows in view of \eqref{eq:mu credible} and if
\begin{align}\label{eq:mutilde}
\|\widetilde{\boldsymbol{\mu}}-\boldsymbol{\mu}_0\|\leq\frac{\sqrt{d}}{\lambda_0}\max_{1\leq k\leq d}\|D_k\widetilde{f}-D_kf_0\|_\infty
\end{align}
holds with $P_0$-probability tending to $1$. Indeed by Remark \ref{rem:uniquemutilde}, $\nabla\widetilde{f}(\widetilde{\boldsymbol{\mu}})=\boldsymbol{0}$ and the Hessian matrix $\boldsymbol{H}\widetilde{f}(\widetilde{\boldsymbol{\mu}})$ is non-negative definite and symmetric. Moreover, since $\widetilde{f}$ is a polynomial splines of order $q_k\geq\alpha_k>2,k=1,\dotsc,d$, by Assumption 2, $\boldsymbol{H}\widetilde{f}(\boldsymbol{x})$ is continuous. Now since $\widetilde{f}\rightarrow f_0$ uniformly in $P_0$-probability as a result of Theorem \ref{th:frate} and $f_0$ has a well-separated maximum by Assumption 2, it follows that $\widetilde{\boldsymbol{\mu}}$ is consistent in estimating $\boldsymbol{\mu}_0$ by Theorem 5.7 of \citet{asymp}. Hence for $n$ large enough, $\mathcal{B}(\widetilde{\boldsymbol{\mu}},\tau/2)$ is contained in $\mathcal{B}(\boldsymbol{\mu}_0,\tau)$ for the same $\tau$ appearing in Assumption 3. Now since $D^{\boldsymbol{r}}\widetilde{f}$ converges uniformly to $D^{\boldsymbol{r}}f_0$ (Theorem \ref{th:frate}), and using the fact that maximum eigenvalue is a continuous operation, we have by the continuous mapping theorem that $\lambda_{\mathrm{max}}(\boldsymbol{H}\widetilde{f}(\boldsymbol{x}))\rightarrow\lambda_{\mathrm{max}}(\boldsymbol{H}f_0(\boldsymbol{x}))$ uniformly in $\boldsymbol{x}$. By further adapting \eqref{eq:maxinequality} to the present situation, we see that $\sup_{\boldsymbol{x}\in\mathcal{B}(\widetilde{\boldsymbol{\mu}},\tau/2)}\lambda_{\mathrm{max}}(\boldsymbol{H}\widetilde{f}(\boldsymbol{x}))<-\lambda_0$ for sufficiently large $n$. Consequently, the proof of the inequality \eqref{eq:muinequality} given in Theorem \ref{th:murate} will go through even if we replace $f_0$ with $\widetilde{f}$ and $\boldsymbol{\mu}_0$ with $\widetilde{\boldsymbol{\mu}}$.

Let $\boldsymbol{1}_d$ be a $d$-dimensional vector of ones, and $\boldsymbol{\xi}$ some point between $\boldsymbol{x}$ and $\boldsymbol{x}-(Rd)^{-1}\boldsymbol{1}_d\rho R_{n,k,\gamma}$ for any given $\boldsymbol{x}\in[0,1]^d$. In what follows, we take $n$ large enough so that $R_{n,k,\gamma}$ is small enough, and adding or subtracting some constant multiple of $R_{n,k,\gamma}$ still allows us to stay within $[0,1]^d$. For (iii), we have by the multivariate mean value theorem and the Cauchy-Schwarz inequality that
\begin{align*}
\left|D_k\widetilde{f}(\boldsymbol{x}-(Rd)^{-1}\boldsymbol{1}_d\rho R_{n,k,\gamma})-D_k\widetilde{f}(\boldsymbol{x})\right|\leq\|\nabla D_k\widetilde{f}(\boldsymbol{\xi})\|R^{-1}d^{-1/2}\rho R_{n,k,\gamma},
\end{align*}
for $k=1,\dotsc,d$. Since $D^{\boldsymbol{r}}\widetilde{f}\rightarrow D^{\boldsymbol{r}}f_0$ uniformly (cf. Theorem \ref{th:frate}) and the norm is a continuous map, $\|\nabla D_k\widetilde{f}(\boldsymbol{\xi})\|\rightarrow\|\nabla D_kf_0(\boldsymbol{\xi})\|\leq\sqrt{d}\max_{1\leq j\leq d}|D_jD_kf_0(\boldsymbol{\xi})|\leq\sqrt{d}\|f_0\|_{\boldsymbol{\alpha},\infty}$ in view of the definition given in \eqref{eq:aninorm}. Therefore uniformly over $\|f_0\|_{\boldsymbol{\alpha},\infty}\leq R$, the right hand side above is less than $\rho R_{n,k,\gamma}$ when $n$ is large enough. Now since the mode of $\widetilde{f}(\cdot-(Rd)^{-1}\bm{1}_d\rho R_{n,k,\gamma})$ is $\widetilde{\boldsymbol{\mu}}+(Rd)^{-1}\bm{1}_d\rho R_{n,k,\gamma}$, it follows immediately that $\widetilde{\boldsymbol{\mu}}+(Rd)^{-1}\boldsymbol{1}_d\rho \max_{1\leq k\leq d}R_{n,k,\gamma}\in \mathcal{C}_{\bm{\mu}}$. Notice that this argument still holds even when we replace $\boldsymbol{1}_d$ with any point in the boundary of a unit cube, and this collectively shows that with probability tending to one, $\mathcal{C}_{\bm{\mu}}$ contains a hyper-cube centered at $\widetilde{\boldsymbol{\mu}}$ of size $(Rd)^{-1}\rho \max_{1\leq k\leq d}R_{n,k,\gamma}$.

The proof of (iv) is similar to that of (i) with $\max_{1\leq k\leq d}R_{n,k,\gamma}$ replaced by $R_{n,\bm{0},\gamma}$. The proof of assertion (v) can be completed by following the arguments used in the proof of assertion (ii) with \eqref{eq:maxinequality} applied to the pair $f$ and $\widetilde{f}$.
\end{proof}

To prove the results in Section \ref{sec:bayes}, we need to first lay out some preliminary details. Define $f_{0,\boldsymbol{z}}(\boldsymbol{x}-\widetilde{\boldsymbol{\mu}})=f_0(\boldsymbol{x})$ to be the shifted true function. Let $\boldsymbol{\theta}_0=(\theta_{0,\boldsymbol{i}}:\boldsymbol{i}\leq\boldsymbol{m}_{\boldsymbol{\alpha}})^T$ be a random vector such that $f_{\boldsymbol{\theta}_0}(\boldsymbol{x}-\widetilde{\boldsymbol{\mu}})=T_{\boldsymbol{\mu}_0}f_0(\boldsymbol{x})$, where $f_{\boldsymbol{\theta}}$ is from \eqref{eq:thetaf} and $T_{\boldsymbol{\mu}_0}f_0(\boldsymbol{x})$ is the Taylor polynomial of order $\boldsymbol{m}_{\boldsymbol{\alpha}}$ by expanding $f_0$ around $\boldsymbol{\mu}_0$, that is,
\begin{align}\label{eq:2theta0}
\sum_{\boldsymbol{i}\leq\boldsymbol{m}_{\boldsymbol{\alpha}}}\theta_{0,\boldsymbol{i}}(\boldsymbol{x}-\widetilde{\boldsymbol{\mu}})^{\boldsymbol{i}}
=f_0(\boldsymbol{\mu}_0)+\sum_{\boldsymbol{i}\leq\boldsymbol{m}_{\boldsymbol{\alpha}},|\boldsymbol{i}|\geq2}\frac{D^{\boldsymbol{i}}f_0(\boldsymbol{\mu}_0)}{\boldsymbol{i}!}(\boldsymbol{x}-\boldsymbol{\mu}_0)^{\boldsymbol{i}},
\end{align}
where $\nabla f_0(\boldsymbol{\mu}_0)=\boldsymbol{0}$ by Assumption 2. Hence $\boldsymbol{\theta}_0$ can be thought of as the true $\boldsymbol{\theta}$ by projecting $f_0$ onto the space of polynomials of order $\boldsymbol{m}_{\boldsymbol{\alpha}}$. Note that $\boldsymbol{\theta}_0$ is random and depends on $\widetilde{\boldsymbol{\mu}}$, $\boldsymbol{\mu}_0$ and $f_0$. By applying $D^{\boldsymbol{i}}$ on both sides of \eqref{eq:2theta0} and evaluating at $\boldsymbol{x}=\widetilde{\boldsymbol{\mu}}$, we have $\boldsymbol{i}!\theta_{0, \boldsymbol{i}}=D^{\boldsymbol{i}}T_{\boldsymbol{\mu}_0}f_0(\widetilde{\boldsymbol{\mu}})$. Note that since $D^{\boldsymbol{i}}f_0(\boldsymbol{x}),\boldsymbol{i}\leq\boldsymbol{m}_{\boldsymbol{\alpha}}$, are continuous by Assumption 2, they are bounded over $\{\boldsymbol{\mu}: |\mu_k-\widetilde{\mu}_k|\leq\delta_{n,k}, k=1,\dotsc,d\}$, and this implies that for any $\boldsymbol{i}\leq\boldsymbol{m}_{\boldsymbol{\alpha}}$, $|\theta_{0,\boldsymbol{i}}|=O_{P_0}(1)$ uniformly over $\|f_0\|_{\boldsymbol{\alpha},\infty}\leq R$. The design matrix $\boldsymbol{Z}$ is generated using i.i.d. uniform samples and by Lemma \ref{lem:random}, we know that $\boldsymbol{Z}^T\boldsymbol{Z}$ is invertible with probability going to $1$ as $n\rightarrow\infty$. In the actual computation, the invertibility of $\boldsymbol{Z}^T\boldsymbol{Z}$ is not an important issue as there is the presence of the prior covariance matrix $\boldsymbol{V}$ to serve as a regularization factor, and $\boldsymbol{Z}^T\boldsymbol{Z}+\boldsymbol{V}^{-1}$ in \eqref{eq:ptheta} is always invertible by our choice of $\boldsymbol{V}$.

As a consequence of Theorem \ref{th:frate}, $D_k\widetilde{f}$ converges uniformly to $D_kf_0$ at the rate $\epsilon_{n,k}$, and by \eqref{eq:mutilde}, this translates to $\|\widetilde{\boldsymbol{\mu}}-\boldsymbol{\mu}_0\|=O_{P_0}(\epsilon_n)$. Let $\boldsymbol{F}_0=(f_0(\boldsymbol{x}_1),\dotsc,f_0(\boldsymbol{x}_{n_2}))^T$ where $\{\boldsymbol{x}_1,\dotsc,\boldsymbol{x}_{n_2}\}$ is the original (unshifted) second stage samples. Note that $(\boldsymbol{Z\theta}_0)_i=f_{\boldsymbol{\theta}_0}(\boldsymbol{x}_i-\widetilde{\boldsymbol{\mu}})=T_{\boldsymbol{\mu}_0}f_0(\boldsymbol{x}_i)$ and under the assumption of \eqref{eq:isoholder}, we have
\begin{align}\label{eq:2approx}
\|\boldsymbol{F}_0-\boldsymbol{Z\theta}_0\|_\infty&=\max_{1\leq i\leq n_2}|f_0(\boldsymbol{x}_i)-T_{\boldsymbol{\mu}_0}f_0(\boldsymbol{x}_i)|\lesssim\max_{1\leq i\leq n_2}\sum_{k=1}^d|x_{ik}-\mu_{0k}|^{\alpha_k}\nonumber\\
&\lesssim\max_{1\leq i\leq n_2}\sum_{k=1}^d|x_{ik}-\widetilde{\mu}_k|^{\alpha_k}+\sum_{k=1}^d|\widetilde{\mu}_k-\mu_{0k}|^{\alpha_k}\lesssim\sum_{k=1}^d\delta_{n,k}^{\alpha_k}
\end{align}
uniformly over $\|f_0\|_{\boldsymbol{\alpha},\infty}\leq R$. The second line follows from the inequality $|x+y|^r\leq\max\{1,2^{r-1}\}(|x|^r+|y|^r)$, while the last inequality is due to $|x_{ik}-\widetilde{\mu}_k|\leq\delta_{n,k}$ almost surely since $x_{ik}-\widetilde{\mu}_k\sim\mathrm{Uniform}(-\delta_{n,k},\delta_{n,k})$; and $|\widetilde{\mu}_k-\mu_{0k}|\leq\|\widetilde{\boldsymbol{\mu}}-\boldsymbol{\mu}_0\|=O_{P_0}(\epsilon_n)=o_{P_0}(\delta_{n,k})$ as argued above, where $\epsilon_n=o(\delta_{n,k}),k=1,\dotsc,d$ was by our choice in \eqref{eq:delta}.

We break the proof of Theorem \ref{th:2M} into a series of steps. First, let us enumerate the elements of $\{\boldsymbol{i}:\boldsymbol{i}\leq\boldsymbol{m}_{\boldsymbol{\alpha}}\}$ as $\{\boldsymbol{i}_0,\dotsc,\boldsymbol{i}_W\}$ with $W+1=\prod_{k=1}^d\alpha_k$. For the rest of this section, we follow this indexing convention and index the entries of vectors $\boldsymbol{\xi}$, $\boldsymbol{\theta}$ and $\boldsymbol{\theta}_0$ by elements of $\{\boldsymbol{i}:\boldsymbol{i}\leq\boldsymbol{m}_{\boldsymbol{\alpha}}\}$. For matrices $\boldsymbol{Z}^T\boldsymbol{Z}$ and $\boldsymbol{V}$, we enumerate their rows and columns starting from $0$ and ending at $W$. We note that in our first and second stage sampling plans, $n_1\asymp n\asymp n_2$.

The first key step is to derive sharp upper bounds for the posterior mean and variance, which will involve upper bounding the entries of $(\boldsymbol{Z}^T\boldsymbol{Z})^{-1}$. These calculations are made simpler by centering the design points so that they are uniformly distributed around zero in each co-ordinate, and $(\boldsymbol{Z}^T\boldsymbol{Z})^{-1}$ will not depend on $\widetilde{\boldsymbol{\mu}}$. The following lemma describes the asymptotic behavior of the entries of $(\boldsymbol{Z}^T\boldsymbol{Z})^{-1}$ when the second stage samples are collected under uniform random sampling. In the following, we define $\boldsymbol{\delta}_n=(\delta_{n,1},\dotsc,\delta_{n,d})^T$ and write $\boldsymbol{\delta}_n^{\boldsymbol{i}}$ to mean $\prod_{k=1}^d\delta_{n,k}^{i_k}$.

\begin{lemma}\label{lem:random}
As $n\rightarrow\infty$, $\boldsymbol{Z}^T\boldsymbol{Z}$ is invertible with probability tending to $1$. Moreover for $a,b=0,\dotsc,W$, we have $[(\boldsymbol{Z}^T\boldsymbol{Z})^{-1}]_{ab}=O_{P}\left(n^{-1}\boldsymbol{\delta}_n^{-(\boldsymbol{i}_a+\boldsymbol{i}_b)}\right)$.
\end{lemma}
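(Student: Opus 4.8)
The plan is to strip off the $\boldsymbol{\delta}_n$-scaling by rescaling the second stage design to the fixed cube $[-1,1]^d$, after which the statement reduces to a law of large numbers for the moment matrix of uniform variables. Since $\boldsymbol{z}_i$ is uniform on $\mathcal{Q}=\{\boldsymbol{x}:|x_k|\le\delta_{n,k}\}$, I would write $z_{ik}=\delta_{n,k}u_{ik}$, so that the $u_{ik}$ are i.i.d. $\mathrm{Uniform}(-1,1)$, and put $\boldsymbol{u}_i=(u_{i1},\dots,u_{id})^T$. Because $\boldsymbol{z}_i^{\boldsymbol{i}_a}=\boldsymbol{\delta}_n^{\boldsymbol{i}_a}\boldsymbol{u}_i^{\boldsymbol{i}_a}$, every entry factors as $(\boldsymbol{Z}^T\boldsymbol{Z})_{ab}=\sum_{i=1}^{n_2}\boldsymbol{z}_i^{\boldsymbol{i}_a+\boldsymbol{i}_b}=\boldsymbol{\delta}_n^{\boldsymbol{i}_a+\boldsymbol{i}_b}\sum_{i=1}^{n_2}\boldsymbol{u}_i^{\boldsymbol{i}_a+\boldsymbol{i}_b}$. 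Setting $\boldsymbol{\Delta}=\mathrm{diag}\{\boldsymbol{\delta}_n^{\boldsymbol{i}_j}:j=0,\dots,W\}$ and letting $\boldsymbol{U}=(\boldsymbol{p}(\boldsymbol{u}_1),\dots,\boldsymbol{p}(\boldsymbol{u}_{n_2}))^T$ be the design matrix of the rescaled points, this is exactly the factorization $\boldsymbol{Z}^T\boldsymbol{Z}=\boldsymbol{\Delta}(\boldsymbol{U}^T\boldsymbol{U})\boldsymbol{\Delta}$ anticipated in Remark \ref{rem:prior}, where now $\boldsymbol{U}^T\boldsymbol{U}$ carries no $\boldsymbol{\delta}_n$-dependence.

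Next I would analyze $n_2^{-1}\boldsymbol{U}^T\boldsymbol{U}$. Its $(a,b)$ entry is $n_2^{-1}\sum_i\boldsymbol{u}_i^{\boldsymbol{i}_a+\boldsymbol{i}_b}$, an average of i.i.d. summands bounded by $1$ in absolute value on $[-1,1]^d$, so by the weak law of large numbers it converges in probability to $\boldsymbol{\Gamma}_{ab}:=\mathrm{E}[\boldsymbol{u}^{\boldsymbol{i}_a+\boldsymbol{i}_b}]$ for $\boldsymbol{u}$ uniform on $[-1,1]^d$; as the matrices have fixed finite size $(W+1)\times(W+1)$, this gives $n_2^{-1}\boldsymbol{U}^T\boldsymbol{U}\to\boldsymbol{\Gamma}$ in probability. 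The heart of the argument, and the only non-routine step, is that $\boldsymbol{\Gamma}$ is invertible. I would note that $\boldsymbol{\Gamma}$ is exactly the Gram matrix of the monomials $\{\boldsymbol{u}^{\boldsymbol{i}}:\boldsymbol{i}\le\boldsymbol{m}_{\boldsymbol{\alpha}}\}$ under the $L_2$ inner product against the uniform probability measure on $[-1,1]^d$: for any $\boldsymbol{c}=(c_0,\dots,c_W)^T$ one has $\boldsymbol{c}^T\boldsymbol{\Gamma}\boldsymbol{c}=\mathrm{E}[(\sum_{j=0}^W c_j\boldsymbol{u}^{\boldsymbol{i}_j})^2]\ge0$. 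Since the uniform measure has full support and the monomials are linearly independent as polynomials, the right side vanishes only when every $c_j=0$; hence $\boldsymbol{\Gamma}$ is positive definite and invertible.

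The two assertions then follow. Because $\boldsymbol{\Gamma}$ is positive definite and $n_2^{-1}\boldsymbol{U}^T\boldsymbol{U}\to\boldsymbol{\Gamma}$ in probability, the smallest eigenvalue of $n_2^{-1}\boldsymbol{U}^T\boldsymbol{U}$ stays bounded away from $0$ with probability tending to $1$, so $\boldsymbol{U}^T\boldsymbol{U}$, and therefore $\boldsymbol{Z}^T\boldsymbol{Z}=\boldsymbol{\Delta}(\boldsymbol{U}^T\boldsymbol{U})\boldsymbol{\Delta}$ (as $\boldsymbol{\Delta}$ is diagonal with nonzero entries), is invertible with probability tending to $1$. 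Inverting the factorization yields $(\boldsymbol{Z}^T\boldsymbol{Z})^{-1}=\boldsymbol{\Delta}^{-1}(\boldsymbol{U}^T\boldsymbol{U})^{-1}\boldsymbol{\Delta}^{-1}$, so that $[(\boldsymbol{Z}^T\boldsymbol{Z})^{-1}]_{ab}=\boldsymbol{\delta}_n^{-(\boldsymbol{i}_a+\boldsymbol{i}_b)}[(\boldsymbol{U}^T\boldsymbol{U})^{-1}]_{ab}$. Writing $(\boldsymbol{U}^T\boldsymbol{U})^{-1}=n_2^{-1}(n_2^{-1}\boldsymbol{U}^T\boldsymbol{U})^{-1}$ and applying the continuous mapping theorem (matrix inversion is continuous at the invertible $\boldsymbol{\Gamma}$) gives $[(\boldsymbol{U}^T\boldsymbol{U})^{-1}]_{ab}=n_2^{-1}([\boldsymbol{\Gamma}^{-1}]_{ab}+o_P(1))=O_P(n_2^{-1})$. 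Since $n_2\asymp n$ in our sampling plan, I conclude $[(\boldsymbol{Z}^T\boldsymbol{Z})^{-1}]_{ab}=O_P(n^{-1}\boldsymbol{\delta}_n^{-(\boldsymbol{i}_a+\boldsymbol{i}_b)})$, the claimed bound. The whole argument is elementary once the positive-definiteness of $\boldsymbol{\Gamma}$ is in hand; that algebraic fact, namely linear independence of the monomials together with the full support of the design measure, is the one place requiring a genuine idea rather than bookkeeping.
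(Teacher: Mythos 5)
Your proposal is correct, and its skeleton coincides with the paper's proof: the same rescaling $z_{ik}=\delta_{n,k}u_{ik}$, the same factorization $\boldsymbol{Z}^T\boldsymbol{Z}=\boldsymbol{\Delta}(\boldsymbol{U}^T\boldsymbol{U})\boldsymbol{\Delta}$ (the paper writes this as $n_2\boldsymbol{\Delta A\Delta}$ in \eqref{eq:a}), the same entry-wise law of large numbers for the rescaled moment matrix, and the same pivotal fact that the limit $\boldsymbol{\Gamma}=\mathrm{E}\,\mathbb{U}\mathbb{U}^T$ is positive definite. Where you diverge is in the two finishing steps, and in both places your route is slightly cleaner. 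First, the paper merely asserts positive definiteness of $\mathrm{E}\,\mathbb{U}\mathbb{U}^T$ ("entries are mixed moments ... and hence is positive definite"), whereas you supply the actual argument: $\boldsymbol{c}^T\boldsymbol{\Gamma}\boldsymbol{c}=\mathrm{E}\bigl[(\sum_j c_j\boldsymbol{u}^{\boldsymbol{i}_j})^2\bigr]$ vanishes only when the polynomial is identically zero, by linear independence of the monomials and full support of the uniform measure; this fills a genuine gap in the paper's exposition. Second, to pass from convergence of $n_2^{-1}\boldsymbol{U}^T\boldsymbol{U}$ to control of the inverse's entries, the paper uses a Loewner-order sandwich $\mathrm{E}\,\mathbb{UU}^T-\epsilon\boldsymbol{I}\leq\boldsymbol{A}\leq\mathrm{E}\,\mathbb{UU}^T+\epsilon\boldsymbol{I}$, inverts it to bound the diagonal entries, and then invokes the Cauchy--Schwarz inequality $g_{ab}\leq\sqrt{g_{aa}g_{bb}}$ for positive definite matrices to handle off-diagonal entries; you instead apply the continuous mapping theorem (inversion is continuous at the invertible $\boldsymbol{\Gamma}$) to get $[(\boldsymbol{U}^T\boldsymbol{U})^{-1}]_{ab}=n_2^{-1}([\boldsymbol{\Gamma}^{-1}]_{ab}+o_P(1))$ for all pairs $(a,b)$ at once, which avoids the separate diagonal/off-diagonal treatment entirely. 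Both routes are valid; the paper's sandwich gives explicit nonasymptotic two-sided matrix bounds that could be reused elsewhere, while yours is shorter and yields the sharper statement that each entry of $n(\boldsymbol{Z}^T\boldsymbol{Z})^{-1}$, suitably rescaled by $\boldsymbol{\delta}_n^{\boldsymbol{i}_a+\boldsymbol{i}_b}$, actually converges in probability to a constant rather than being merely $O_P(1)$.
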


\begin{proof} 
After centering, second stage samples are distributed as $\boldsymbol{z}_i=(z_{i1},\dotsc,z_{id})^T\overset{\mathrm{i.i.d.}}{\sim}\mathrm{Uniform}\left(\prod_{k=1}^d[-\delta_{n,k},\delta_{n,k}]\right), i=1,\dotsc,n_2$, and $z_{ik}\sim\mathrm{Uniform}[-\delta_{n,k},\delta_{n,k}]$. Thus,
\begin{align}\label{eq:a}
\boldsymbol{Z}^T\boldsymbol{Z}=n_2\begin{pmatrix}
a_{00}&a_{01}\boldsymbol{\delta}_n^{\boldsymbol{i}_1}&\cdots& a_{0W}\boldsymbol{\delta}_n^{\boldsymbol{i}_W}\\
a_{10}\boldsymbol{\delta}_n^{\boldsymbol{i}_1}&a_{11}\boldsymbol{\delta}_n^{2\boldsymbol{i}_1}&\cdots& a_{1W}\boldsymbol{\delta}_n^{\boldsymbol{i}_1+\boldsymbol{i}_W}\\
\vdots&\vdots&\ddots&\vdots\\
a_{W0}\boldsymbol{\delta}_n^{\boldsymbol{i}_W}&a_{W1}\boldsymbol{\delta}_n^{\boldsymbol{i}_W+\boldsymbol{i}_1}&\cdots& a_{WW}\boldsymbol{\delta}_n^{2\boldsymbol{i}_W}
\end{pmatrix}:=n_2\boldsymbol{\Delta A\Delta}
\end{align}
with the $(i,j)$-entry of $\boldsymbol{A}$ being $a_{ij}=n_2^{-1}\sum_{k=1}^{n_2}\boldsymbol{U}_k^{\boldsymbol{i}_i}\boldsymbol{U}_k^{\boldsymbol{i}_j}$ where $\boldsymbol{U}_k=(U_{k1},\dotsc,U_{kd})^T\overset{\mathrm{i.i.d.}}{\sim}\mathrm{Uniform}[-1,1]^d$, and $\boldsymbol{\Delta}=\mathrm{diag}\left\{\boldsymbol{\delta}_n^{\boldsymbol{i}_j}:j=0,\dotsc,W\right\}$.
Define $\mathbb{U}=(\boldsymbol{U}^{\boldsymbol{i}_0},\dotsc,\boldsymbol{U}^{\boldsymbol{i}_W})^T$ for $\boldsymbol{U}=(U_1,\dotsc,U_d)^T\sim\mathrm{Uniform}[-1,1]^d$. By the law of large numbers, we have that $\boldsymbol{A}$ converges in probability to $\mathrm{E}\mathbb{U}\mathbb{U}^T$ entry-wise, and hence  $\mathrm{E}\mathbb{UU}^T-\epsilon\boldsymbol{I}\leq\boldsymbol{A}\leq\mathrm{E}\mathbb{UU}^T+\epsilon\boldsymbol{I}$ for a sufficiently small $\epsilon>0$. Observe that entries of $\mathrm{E}\mathbb{U}\mathbb{U}^T$ are mixed moments of $U\sim \mathrm{Uniform}[-1,1]$ and hence is positive definite. Then $\mathrm{E}\mathbb{UU}^T-\epsilon\boldsymbol{I}$ is invertible when $\epsilon$ is smaller than the minimum eigenvalue of $\mathrm{E}\mathbb{UU}^T$. Thus for sufficiently small $\epsilon>0$,
\begin{align*}
n_2^{-1}\Delta^{-1}(\mathrm{E}\mathbb{UU}^T+\epsilon\boldsymbol{I})^{-1}\Delta^{-1}\leq(\boldsymbol{Z}^T\boldsymbol{Z})^{-1}\leq n_2^{-1}\Delta^{-1}(\mathrm{E}\mathbb{UU}^T-\epsilon\boldsymbol{I})^{-1}\Delta^{-1}.
\end{align*}
Let $u^{ij}$ be the $(i,j)$th entry of $(\mathrm{E}\mathbb{U}\mathbb{U}^T)^{-1}$ and recall that $n_2\geq cn$ for some constant $c>0$. It then follows that for $a=0,\dotsc,W$, $[(\boldsymbol{Z}^T\boldsymbol{Z})^{-1}]_{aa}$ is $O_P(n_2^{-1}u^{aa}\boldsymbol{\delta}_n^{-(\boldsymbol{i}_a+\boldsymbol{i}_a)})=O_P(n^{-1}\boldsymbol{\delta}_n^{-(\boldsymbol{i}_a+\boldsymbol{i}_a)})$. Using the fact that for positive definite $\boldsymbol{G}$, $g_{ij}\leq\sqrt{g_{ii}g_{jj}}$ by the Cauchy-Schwarz inequality, $[(\boldsymbol{Z}^T\boldsymbol{Z})^{-1}]_{ab}$ with $a,b=0,\dotsc,W$ is bounded above by
\begin{align*}
\sqrt{[(\boldsymbol{Z}^T\boldsymbol{Z})^{-1}]_{aa}[(\boldsymbol{Z}^T\boldsymbol{Z})^{-1}]_{bb}}=O_P\left(n^{-1}\boldsymbol{\delta}_n^{-(\boldsymbol{i}_a+\boldsymbol{i}_b)}\right).\qquad\qedhere
\end{align*}
\end{proof}

\begin{proof}[Proof of Proposition 5.1]
By the triangle inequality, $|\widetilde{\sigma}^2_{*}-\sigma_0^2|\leq|\widetilde{\sigma}_1^2-\sigma_0^2|+|\widetilde{\sigma}_2^2-\sigma_0^2|$. By (a) of Proposition 9.5, the first term is $O_{P_0}(\max\{n^{-1/2},n^{-2\alpha^{*}/(2\alpha^{*}+d)}\})$. To bound the second term, let $\boldsymbol{U}=(\boldsymbol{ZVZ}^T+\boldsymbol{I}_{n})^{-1}$. By equation (33) of page 355 in \citet{expectquad},
\begin{align}
|\mathrm{E}(\widetilde{\sigma}_2^2|\boldsymbol{\theta}_0)-\sigma_0^2|&=|n^{-1}\sigma_0^2\mathrm{tr}(\boldsymbol{U})-\sigma_0^2|+n^{-1}(\boldsymbol{F}_0-\boldsymbol{Z\xi})^T\boldsymbol{U}(\boldsymbol{F}_0-\boldsymbol{Z\xi})\nonumber\\
&\lesssim n^{-1}[\mathrm{tr}(\boldsymbol{I}_{n}-\boldsymbol{U})+(\boldsymbol{F}_0-\boldsymbol{Z\theta}_0)^T
\boldsymbol{U}(\boldsymbol{F}_0-\boldsymbol{Z\theta}_0)\label{eq:2tsigma0bias}\\
&\quad+(\boldsymbol{Z\theta}_0-\boldsymbol{Z\xi})^T
\boldsymbol{U}(\boldsymbol{Z\theta}_0-\boldsymbol{Z\xi})],\nonumber
\end{align}
where we have used $(\boldsymbol{x}+\boldsymbol{y})^T\boldsymbol{G}(\boldsymbol{x}+\boldsymbol{y})\leq2\boldsymbol{x}^T\boldsymbol{Gx}+2\boldsymbol{y}^T\boldsymbol{Gy}$ for any matrix $\boldsymbol{G}\geq\boldsymbol{0}$ (Cauchy-Schwarz and the geometric-arithmetic inequalities). Let $\boldsymbol{P}_{\boldsymbol{Z}}=\boldsymbol{Z}(\boldsymbol{Z}^T\boldsymbol{Z})^{-1}\boldsymbol{Z}^T$ be the orthogonal projection matrix. For matrices $\boldsymbol{Q},\boldsymbol{C},\boldsymbol{T},\boldsymbol{W}$, the binomial inverse theorem (see Theorem 18.2.8 of \citet{davidmatrix}) says that
\begin{align*}
(\boldsymbol{Q}+\boldsymbol{CTW})^{-1}=\boldsymbol{Q}^{-1}-\boldsymbol{Q}^{-1}\boldsymbol{C}(\boldsymbol{T}^{-1}+\boldsymbol{WQ}^{-1}\boldsymbol{C})^{-1}\boldsymbol{WQ}^{-1}.
\end{align*}
Applying the above twice to $\boldsymbol{U}$ yields
\begin{align}\label{eq:2tbiasinter}
(\boldsymbol{ZVZ}^T+\boldsymbol{I}_{n})^{-1}&=\boldsymbol{I}_{n}-\boldsymbol{Z}
(\boldsymbol{Z}^T\boldsymbol{Z}+\boldsymbol{V}^{-1})^{-1}\boldsymbol{Z}^T=\boldsymbol{I}_{n}-\boldsymbol{P}_{\boldsymbol{Z}}
+\boldsymbol{M},
\end{align}
where $\boldsymbol{M}=\boldsymbol{Z}(\boldsymbol{Z}^T\boldsymbol{Z})^{-1}
[\boldsymbol{V}+(\boldsymbol{Z}^T\boldsymbol{Z})^{-1}]^{-1}(\boldsymbol{Z}^T\boldsymbol{Z})^{-1}\boldsymbol{Z}^T
\geq\boldsymbol{0}$. Hence the first term in \eqref{eq:2tsigma0bias} is $n^{-1}\mathrm{tr}(\boldsymbol{P}_{\boldsymbol{Z}}-\boldsymbol{M})\leq n^{-1}\mathrm{tr}(\boldsymbol{P}_{\boldsymbol{Z}})
=(W+1)/n$. Note that $\boldsymbol{U}\leq\boldsymbol{I}_{n}$ since $\boldsymbol{ZVZ}^T\geq\boldsymbol{0}$, and the second term in \eqref{eq:2tsigma0bias} is bounded by
\begin{align*}
n^{-1}\|\boldsymbol{U}\|_{(2,2)}\|\boldsymbol{F}_0-\boldsymbol{Z\theta}_0\|^2
\leq\|\boldsymbol{F}_0-\boldsymbol{Z\theta}_0\|_\infty^2
\lesssim\sum_{k=1}^d\delta_{n,k}^{2\alpha_k},
\end{align*}
in view of (8.3). By \eqref{eq:2tbiasinter} and $(\boldsymbol{I}-\boldsymbol{P}_{\boldsymbol{Z}})\boldsymbol{Z}=\boldsymbol{0}$, the last term in \eqref{eq:2tsigma0bias} is $n^{-1}(\boldsymbol{\theta}_0-\boldsymbol{\xi})^T[\boldsymbol{V}+(\boldsymbol{Z}^T\boldsymbol{Z})^{-1}]^{-1}
(\boldsymbol{\theta}_0-\boldsymbol{\xi})
\leq n^{-1}\sum_{j=0}^W\boldsymbol{\delta}_n^{\boldsymbol{i}_j}(\theta_{0,\boldsymbol{i}_j}-\xi_{\boldsymbol{i}_j})^2=O_{P_0}(n^{-1})$, since $\delta_{n,k}=o(1),k=1,\dotsc,d$, $\theta_{0,\boldsymbol{i}_j}=O_{P_0}(1)$ and $\xi_{\boldsymbol{i}_j}=O(1)$ by assumption on the prior for any $0\leq j\leq W$. Combining the three bounds established into \eqref{eq:2tsigma0bias}, we obtain $|\mathrm{E}(\widetilde{\sigma}_2^2|\boldsymbol{\theta}_0)-\sigma_0^2|\lesssim n^{-1}+\sum_{k=1}^d\delta_{n,k}^{2\alpha_k}$.

We write $n\widetilde{\sigma}_2^2=(\boldsymbol{F}_0-\boldsymbol{Z\xi})^T\boldsymbol{U}(\boldsymbol{F}_0-\boldsymbol{Z\xi})+2(\boldsymbol{F}_0-\boldsymbol{Z\xi})^T\boldsymbol{U\varepsilon}
+\boldsymbol{\varepsilon}^T\boldsymbol{U\varepsilon}$ by substituting $\boldsymbol{Y}=\boldsymbol{F}_0+\boldsymbol{\varepsilon}$. Observe that $\boldsymbol{\varepsilon}$ and $\boldsymbol{\theta}_0$ are independent by definition. Using the inequality $\mathrm{Var}(A_1+A_2)\leq2\mathrm{Var}(A_1)+2\mathrm{Var}(A_2)$ (from Cauchy-Schwarz and geometric-arithmetic inequalities), we conclude that $\mathrm{Var}(\widetilde{\sigma}_2^2|\boldsymbol{\theta}_0)$ is bounded up to a constant multiple by
\begin{align}\label{eq:2tsigma0var}
&n^{-2}[(\boldsymbol{F}_0-\boldsymbol{Z\theta}_0)^T\boldsymbol{U}^2(\boldsymbol{F}_0-\boldsymbol{Z\theta}_0)+(\boldsymbol{Z\theta}_0-\boldsymbol{Z\xi})^T\boldsymbol{U}^2(\boldsymbol{Z\theta}_0-\boldsymbol{Z\xi})
+\mathrm{Var}(\boldsymbol{\varepsilon}^T\boldsymbol{U}\boldsymbol{\varepsilon})].
\end{align}
In view of (8.3) and $\boldsymbol{U}\leq\boldsymbol{I}_{n}$, the first term is bounded by $n^{-2}\|\boldsymbol{U}\|_{(2,2)}^2\|\boldsymbol{F}_0-\boldsymbol{Z\theta}_0\|^2
\leq n^{-1}\|\boldsymbol{F}_0-\boldsymbol{Z\theta}_0\|_\infty^2\lesssim n^{-1}\sum_{k=1}^d\delta_{n,k}^{2\alpha_k}$. Observe that since $\boldsymbol{V}\geq\boldsymbol{0}$,
\begin{align}\label{eq:2tinterbound}
\boldsymbol{Z}^T\boldsymbol{M}^2\boldsymbol{Z}&=[\boldsymbol{V}+(\boldsymbol{Z}^T\boldsymbol{Z})^{-1}]^{-1}(\boldsymbol{Z}^T\boldsymbol{Z})^{-1}[\boldsymbol{V}+(\boldsymbol{Z}^T\boldsymbol{Z})^{-1}]^{-1}\nonumber\\
&\leq[\boldsymbol{V}+(\boldsymbol{Z}^T\boldsymbol{Z})^{-1}]^{-1}\leq\boldsymbol{Z}^T\boldsymbol{Z}.
\end{align}
Using \eqref{eq:2tbiasinter}, idempotency of $\boldsymbol{I}_{n}-\boldsymbol{P}_{\boldsymbol{Z}}$ and $(\boldsymbol{I}_{n}-\boldsymbol{P}_{\boldsymbol{Z}})\boldsymbol{Z}=\boldsymbol{0}$, the second term in \eqref{eq:2tsigma0var} is $n^{-2}(\boldsymbol{\theta}_0-\boldsymbol{\xi})^T\boldsymbol{Z}^T(\boldsymbol{I}_{n}-\boldsymbol{P}_{\boldsymbol{Z}}
+\boldsymbol{M})^2\boldsymbol{Z}(\boldsymbol{\theta}_0-\boldsymbol{\xi})$, which is
\begin{align}\label{eq:2varinter}
n^{-2}(\boldsymbol{\theta}_0-\boldsymbol{\xi})^T\boldsymbol{Z}^T\boldsymbol{M}^2\boldsymbol{Z}(\boldsymbol{\theta}_0-\boldsymbol{\xi})
\leq n^{-2}(\boldsymbol{\theta}_0-\boldsymbol{\xi})^T\boldsymbol{Z}^T\boldsymbol{Z}(\boldsymbol{\theta}_0-\boldsymbol{\xi}),
\end{align}
in view of \eqref{eq:2tinterbound}. By (8.4) in the proof of Lemma 8.1, we can write $\boldsymbol{Z}^T\boldsymbol{Z}=n_2\boldsymbol{\Delta A\Delta}$ where $\boldsymbol{\Delta}=\mathrm{diag}\{\boldsymbol{\delta}_n^{\boldsymbol{i}_j}:j=0,\dotsc,W\}$ and $\boldsymbol{A}\rightarrow\mathrm{E}\mathbb{U}\mathbb{U}^T$ in probability entry-wise, where $\mathbb{U}=(\boldsymbol{U}^{\boldsymbol{i}_0},\dotsc,\boldsymbol{U}^{\boldsymbol{i}_W})^T$ for $\boldsymbol{U}=(U_1,\dotsc,U_d)^T\sim\mathrm{Uniform}[-1,1]^d$. This gives  $\|\boldsymbol{A}\|_{(2,2)}\rightarrow\|\mathrm{E}\mathbb{U}\mathbb{U}^T\|_{(2,2)}$ in probability. The entries of $\mathrm{E}\mathbb{U}\mathbb{U}^T$ are mixed moments of $\mathrm{Uniform}[-1,1]$ and hence the matrix is nonsingular with $\|\mathrm{E}\mathbb{U}\mathbb{U}^T\|_{(2,2)}<\infty$. Since $\|\boldsymbol{\Delta}\|_{(2,2)}=1$ and $n_2\leq n$, the right hand side of \eqref{eq:2varinter} is bounded by
\begin{align*}
n_2n^{-2}\|\boldsymbol{A}\|_{(2,2)}\|\boldsymbol{\Delta}\|_{(2,2)}^2\|\boldsymbol{\theta}_0-\boldsymbol{\xi}\|^2=O_{P_0}(n^{-1}),
\end{align*}
because $\|\boldsymbol{\theta}_0-\boldsymbol{\xi}\|\le \|\boldsymbol{\theta}_0\|+\|\boldsymbol{\xi}\|=O_{P_0}(1)$. By Lemma A.10 of \citet{yoo2016} with $\|\boldsymbol{U}\|_{(2,2)}\leq1$ and Gaussian errors by Assumption 1, the last term in \eqref{eq:2tsigma0var} is $O(1/n)$. Combining this with the three bounds established above, we obtain $\mathrm{Var}(\widetilde{\sigma}_2^2|\boldsymbol{\theta}_0)=O_{P_0}(1/n)$. Therefore, the mean square error is $\mathrm{E}_0(\widetilde{\sigma}_2^2-\sigma_0^2)^2=\mathrm{E}\{\mathrm{E}[(\widetilde{\sigma}_2^2-\sigma_0^2)^2|\boldsymbol{\theta}_0]\}\lesssim n^{-1}+\sum_{k=1}^d\delta_{n,k}^{4\alpha_k}$.

To prove (b), observe that $\mathrm{E}(\sigma^2|\boldsymbol{Y})\lesssim n^{-1}+\widetilde{\sigma}_{*}^2$ and $\mathrm{Var}(\sigma^2|\boldsymbol{Y})\lesssim n^{-3}+n^{-1}\widetilde{\sigma}_{*}^4$. Therefore by Markov's inequality, the second stage posterior of $\sigma^2$ concentrates around the second stage empirical Bayes estimator $\widetilde{\sigma}_{*}^2$, and thus (b) will inherit the rate from (a) as established above.
\end{proof}

Let $\mathcal{K}_n:=[\sigma_0^2-m_n\xi_n,\sigma_0^2+m_n\xi_n]$ where $m_n$ is any sequence going to infinity (e.g., slowly varying such as $\log{n}$) and $\xi_n=\max\{n^{-1/2},n^{-2\alpha^{*}/(2\alpha^{*}+d)},\sum_{k=1}^d\delta_{n,k}^{2\alpha_k}\}$. Recall that $\mathcal{Q}=\{\boldsymbol{x}:|x_k|\leq\delta_{n,k}, k=1,\dotsc,d\}$ is the centered second stage credible set/sampling region.
\begin{lemma}\label{lem:2thetamse}
For any $\boldsymbol{i}\leq\boldsymbol{m}_{\boldsymbol{\alpha}}$ and $\sigma^2\in\mathcal{K}_n$,
\begin{align*}
\mathrm{E}[(\theta_{\boldsymbol{i}}-\theta_{0,\boldsymbol{i}})^2|\boldsymbol{Y},\sigma^2]=O_{P_0}\left[\prod_{k=1}^d\delta_{n,k}^{-2i_k}\left(\frac{1}{n}+\sum_{k=1}^d\delta_{n,k}^{2\alpha_k}\right)\right].
\end{align*}
\end{lemma}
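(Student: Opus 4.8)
The plan is to use the bias--variance decomposition of the conditional mean square error,
\begin{align*}
\mathrm{E}[(\theta_{\boldsymbol{i}}-\theta_{0,\boldsymbol{i}})^2|\boldsymbol{Y},\sigma^2]=\mathrm{Var}(\theta_{\boldsymbol{i}}|\boldsymbol{Y},\sigma^2)+(\widehat{\theta}_{\boldsymbol{i}}-\theta_{0,\boldsymbol{i}})^2,
\end{align*}
where $\widehat{\boldsymbol{\theta}}=(\boldsymbol{Z}^T\boldsymbol{Z}+\boldsymbol{V}^{-1})^{-1}(\boldsymbol{Z}^T\boldsymbol{Y}+\boldsymbol{V}^{-1}\boldsymbol{\xi})$ is the posterior mean from \eqref{eq:ptheta}, and to treat both pieces after rescaling by $\boldsymbol{\Delta}=\mathrm{diag}\{\boldsymbol{\delta}_n^{\boldsymbol{i}_j}:j=0,\dotsc,W\}$. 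The unifying simplification is that, since $\boldsymbol{V}^{-1}=\boldsymbol{\Delta}^2$ and $\boldsymbol{Z}^T\boldsymbol{Z}=n_2\boldsymbol{\Delta A\Delta}$ by \eqref{eq:a} in Lemma \ref{lem:random}, the rescaled precision matrix becomes $\boldsymbol{G}:=\boldsymbol{\Delta}^{-1}(\boldsymbol{Z}^T\boldsymbol{Z}+\boldsymbol{V}^{-1})\boldsymbol{\Delta}^{-1}=n_2\boldsymbol{A}+\boldsymbol{I}$, whose smallest eigenvalue exceeds $n_2\lambda_{\mathrm{min}}(\boldsymbol{A})$ with probability tending to one, because $\boldsymbol{A}\to\mathrm{E}\mathbb{U}\mathbb{U}^T$ (positive definite) and $n_2\asymp n$; hence $\|\boldsymbol{G}^{-1}\|_{(2,2)}=O_P(n^{-1})$. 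For the variance term I would then write $\mathrm{Var}(\theta_{\boldsymbol{i}}|\boldsymbol{Y},\sigma^2)=\sigma^2[\boldsymbol{\Delta}^{-1}\boldsymbol{G}^{-1}\boldsymbol{\Delta}^{-1}]_{\boldsymbol{i}\boldsymbol{i}}=\sigma^2\boldsymbol{\delta}_n^{-2\boldsymbol{i}}[\boldsymbol{G}^{-1}]_{\boldsymbol{i}\boldsymbol{i}}$; since $\sigma^2\in\mathcal{K}_n$ is bounded and $[\boldsymbol{G}^{-1}]_{\boldsymbol{i}\boldsymbol{i}}\leq\|\boldsymbol{G}^{-1}\|_{(2,2)}$, this contributes $O_{P_0}(\boldsymbol{\delta}_n^{-2\boldsymbol{i}}/n)$, already within the claimed bound.

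For the squared deviation, writing $\boldsymbol{Y}=\boldsymbol{F}_0+\boldsymbol{\varepsilon}$ I would decompose
\begin{align*}
\widehat{\boldsymbol{\theta}}-\boldsymbol{\theta}_0=(\boldsymbol{Z}^T\boldsymbol{Z}+\boldsymbol{V}^{-1})^{-1}\boldsymbol{Z}^T(\boldsymbol{F}_0-\boldsymbol{Z}\boldsymbol{\theta}_0)-(\boldsymbol{Z}^T\boldsymbol{Z}+\boldsymbol{V}^{-1})^{-1}\boldsymbol{V}^{-1}(\boldsymbol{\theta}_0-\boldsymbol{\xi})+(\boldsymbol{Z}^T\boldsymbol{Z}+\boldsymbol{V}^{-1})^{-1}\boldsymbol{Z}^T\boldsymbol{\varepsilon}=:\boldsymbol{T}_1+\boldsymbol{T}_2+\boldsymbol{T}_3,
\end{align*}
and, since $(\widehat{\theta}_{\boldsymbol{i}}-\theta_{0,\boldsymbol{i}})^2\leq\boldsymbol{\delta}_n^{-2\boldsymbol{i}}\|\boldsymbol{\Delta}(\widehat{\boldsymbol{\theta}}-\boldsymbol{\theta}_0)\|^2$, it suffices to bound each $\|\boldsymbol{\Delta}\boldsymbol{T}_j\|^2$ by $O_{P_0}(n^{-1}+\sum_k\delta_{n,k}^{2\alpha_k})$. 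Setting $\boldsymbol{Z}_*=\boldsymbol{Z}\boldsymbol{\Delta}^{-1}$, whose entries $\boldsymbol{U}_i^{\boldsymbol{i}_j}$ are bounded by $1$ and which satisfies $\boldsymbol{Z}_*^T\boldsymbol{Z}_*=n_2\boldsymbol{A}$, the approximation term obeys $\boldsymbol{\Delta}\boldsymbol{T}_1=\boldsymbol{G}^{-1}\boldsymbol{Z}_*^T(\boldsymbol{F}_0-\boldsymbol{Z}\boldsymbol{\theta}_0)$, so that $\|\boldsymbol{\Delta}\boldsymbol{T}_1\|\leq\|\boldsymbol{G}^{-1}\|_{(2,2)}\|\boldsymbol{Z}_*\|_{(2,2)}\sqrt{n_2}\|\boldsymbol{F}_0-\boldsymbol{Z}\boldsymbol{\theta}_0\|_\infty\lesssim n^{-1}\cdot n\sum_k\delta_{n,k}^{\alpha_k}$ by \eqref{eq:2approx}, giving $\|\boldsymbol{\Delta}\boldsymbol{T}_1\|^2\lesssim\sum_k\delta_{n,k}^{2\alpha_k}$. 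The prior term collapses to $\boldsymbol{\Delta}\boldsymbol{T}_2=-\boldsymbol{G}^{-1}\boldsymbol{\Delta}(\boldsymbol{\theta}_0-\boldsymbol{\xi})$, and since $\boldsymbol{\delta}_n^{\boldsymbol{i}_j}\leq1$ and $\theta_{0,\boldsymbol{i}_j},\xi_{\boldsymbol{i}_j}=O_{P_0}(1)$ we obtain $\|\boldsymbol{\Delta}\boldsymbol{T}_2\|^2=O_{P_0}(n^{-2})$, which is negligible.

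The noise term $\boldsymbol{\Delta}\boldsymbol{T}_3=\boldsymbol{G}^{-1}\boldsymbol{Z}_*^T\boldsymbol{\varepsilon}$ I would control in conditional mean square: using independence of $\boldsymbol{\varepsilon}$ from the design,
\begin{align*}
\mathrm{E}[\|\boldsymbol{\Delta}\boldsymbol{T}_3\|^2|\boldsymbol{Z}]=\sigma_0^2\,\mathrm{tr}(\boldsymbol{G}^{-2}\boldsymbol{Z}_*^T\boldsymbol{Z}_*)=\sigma_0^2\,\mathrm{tr}(\boldsymbol{G}^{-2}n_2\boldsymbol{A})\leq\sigma_0^2\,\mathrm{tr}(\boldsymbol{G}^{-1}),
\end{align*}
where the last step uses $n_2\boldsymbol{A}=\boldsymbol{G}-\boldsymbol{I}$ so that $\boldsymbol{G}^{-2}n_2\boldsymbol{A}=\boldsymbol{G}^{-1}-\boldsymbol{G}^{-2}\leq\boldsymbol{G}^{-1}$; since $\mathrm{tr}(\boldsymbol{G}^{-1})\leq(W+1)\|\boldsymbol{G}^{-1}\|_{(2,2)}=O_P(n^{-1})$ with $W+1=\prod_k\alpha_k$ a fixed constant, Markov's inequality applied conditionally on $\boldsymbol{Z}$ yields $\|\boldsymbol{\Delta}\boldsymbol{T}_3\|^2=O_{P_0}(n^{-1})$. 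Combining the three bounds through $\|\boldsymbol{\Delta}(\widehat{\boldsymbol{\theta}}-\boldsymbol{\theta}_0)\|^2\leq3\sum_{j=1}^3\|\boldsymbol{\Delta}\boldsymbol{T}_j\|^2$ gives the stated rate for the squared deviation, and adding the variance bound completes the proof. The main obstacle I anticipate is the bookkeeping between the two sources of randomness --- the uniform second-stage design (handled in $O_P$ through the lower bound on $\lambda_{\mathrm{min}}(\boldsymbol{A})$ and the invertibility from Lemma \ref{lem:random}) and the independent Gaussian errors (handled in $O_{P_0}$ through the conditional second-moment argument for $\boldsymbol{T}_3$) --- together with checking that the $O_{P_0}(1)$ control on $\boldsymbol{\theta}_0$ and the approximation bound \eqref{eq:2approx} hold uniformly over $\|f_0\|_{\boldsymbol{\alpha},\infty}\leq R$; the algebra itself is routine once the rescaling identity $\boldsymbol{G}=n_2\boldsymbol{A}+\boldsymbol{I}$ is in place.
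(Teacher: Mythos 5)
Your proposal is correct, and its skeleton coincides with the paper's proof: the same bias--variance decomposition of $\mathrm{E}[(\theta_{\boldsymbol{i}}-\theta_{0,\boldsymbol{i}})^2|\boldsymbol{Y},\sigma^2]$, the same three-term split of $\widehat{\boldsymbol{\theta}}-\boldsymbol{\theta}_0$ into approximation, prior and noise contributions, and the same structural inputs (the identity $\boldsymbol{Z}^T\boldsymbol{Z}=n_2\boldsymbol{\Delta A\Delta}$ from \eqref{eq:a} in Lemma \ref{lem:random}, the approximation bound \eqref{eq:2approx}, and $\theta_{0,\boldsymbol{i}}=O_{P_0}(1)$). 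Where you genuinely differ is in the execution: the paper works entry-by-entry, invoking the stated conclusion of Lemma \ref{lem:random} together with the elementwise Cauchy--Schwarz bound $g_{hj}\leq\sqrt{g_{hh}g_{jj}}$ for $(\boldsymbol{Z}^T\boldsymbol{Z}+\boldsymbol{V}^{-1})^{-1}$ and explicit coordinate sums, whereas you whiten once by $\boldsymbol{\Delta}$, exploit $\boldsymbol{V}^{-1}=\boldsymbol{\Delta}^2$ to obtain $\boldsymbol{G}=n_2\boldsymbol{A}+\boldsymbol{I}$, and control everything through the single operator-norm bound $\|\boldsymbol{G}^{-1}\|_{(2,2)}=O_P(n^{-1})$ plus the trace inequality $\mathrm{tr}(\boldsymbol{G}^{-2}n_2\boldsymbol{A})\leq\mathrm{tr}(\boldsymbol{G}^{-1})$ for the noise term. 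Your version is more compact and less error-prone (one eigenvalue bound replaces several entrywise estimates, and the conditional second-moment/Markov treatment of $\boldsymbol{T}_3$ matches the paper's covariance-plus-Markov step in substance); the small price is the aggregation step $(\widehat{\theta}_{\boldsymbol{i}}-\theta_{0,\boldsymbol{i}})^2\leq\boldsymbol{\delta}_n^{-2\boldsymbol{i}}\|\boldsymbol{\Delta}(\widehat{\boldsymbol{\theta}}-\boldsymbol{\theta}_0)\|^2$, which throws all coordinates into one norm and so could not deliver a per-coordinate bound sharper than the worst one --- immaterial here, since after rescaling every coordinate carries the same rate $n^{-1}+\sum_{k}\delta_{n,k}^{2\alpha_k}$, which is exactly what the lemma asserts. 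Your closing remarks on the two sources of randomness (design randomness absorbed into $O_P$ bounds on $\boldsymbol{A}$, Gaussian noise handled conditionally on $\boldsymbol{Z}$) and on uniformity over $\|f_0\|_{\boldsymbol{\alpha},\infty}\leq R$ address precisely the points where care is needed, so I see no gap.
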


\begin{proof}
Let $0\leq h\leq W$. Since $\boldsymbol{V}>\boldsymbol{0}$ by assumption, we have by Lemma \ref{lem:random} that $\sup_{\sigma^2\in\mathcal{K}_n}\mathrm{Var}(\theta_{\boldsymbol{i}_h}|\boldsymbol{Y},\sigma^2)$ is
\begin{align}\label{eq:2vartheta}
[\sigma_0^2+o(1)][(\boldsymbol{Z}^T\boldsymbol{Z}+\boldsymbol{V}^{-1})^{-1}]_{hh}
\lesssim[(\boldsymbol{Z}^T\boldsymbol{Z})^{-1}]_{hh}\lesssim n^{-1}\boldsymbol{\delta}_n^{-2\boldsymbol{i}_h}.
\end{align}
Now the bias for the conditional posterior mean in vector form is
\begin{align}\label{eq:2thetabias}
\mathrm{E}(\boldsymbol{\theta}|\boldsymbol{Y},\sigma^2)-\boldsymbol{\theta}_0&=(\boldsymbol{Z}^T\boldsymbol{Z}+\boldsymbol{V}^{-1})^{-1}(\boldsymbol{Z}^T\boldsymbol{Y}+\boldsymbol{V}^{-1}\boldsymbol{\xi})-\boldsymbol{\theta}_0\nonumber\\
&=(\boldsymbol{Z}^T\boldsymbol{Z}+\boldsymbol{V}^{-1})^{-1}[\boldsymbol{Z}^T\boldsymbol{\varepsilon}+\boldsymbol{Z}^T(\boldsymbol{F}_0-\boldsymbol{Z\theta}_0)+\boldsymbol{V}^{-1}(\boldsymbol{\xi}-\boldsymbol{\theta}_0)].
\end{align}
Following the same reasoning as in \eqref{eq:2vartheta}, the $h$th diagonal entry  of the covariance matrix $\sigma_0^2(\boldsymbol{Z}^T\boldsymbol{Z}+\boldsymbol{V}^{-1})^{-1}
\boldsymbol{Z}^T\boldsymbol{Z}(\boldsymbol{Z}^T\boldsymbol{Z}+\boldsymbol{V}^{-1})^{-1}$ of $(\boldsymbol{Z}^T\boldsymbol{Z}+\boldsymbol{V}^{-1})^{-1}
\boldsymbol{\varepsilon}$ is
\begin{align*}
\sigma_0^2[(\boldsymbol{Z}^T\boldsymbol{Z}+\boldsymbol{V}^{-1})^{-1}
\boldsymbol{Z}^T\boldsymbol{Z}(\boldsymbol{Z}^T\boldsymbol{Z}+\boldsymbol{V}^{-1})^{-1}]_{hh}
\lesssim n^{-1}\boldsymbol{\delta}_n^{-2\boldsymbol{i}_h}.
\end{align*}
Since $\mathrm{E}_0(\boldsymbol{\varepsilon})=\boldsymbol{0}$, it follows from Markov's inequality that the $h$th entry of $(\boldsymbol{Z}^T\boldsymbol{Z}+\boldsymbol{V}^{-1})^{-1}\boldsymbol{Z}^T\boldsymbol{\varepsilon}$ is $O_{P_0}\left(n^{-1/2}\boldsymbol{\delta}_n^{-\boldsymbol{i}_h}\right)$ for $0\leq h\leq W$.

Let $\beta_{ij}$ be the $(i,j)$th element of $(\boldsymbol{Z}^T\boldsymbol{Z}+\boldsymbol{V}^{-1})^{-1}$, $\kappa_{ij}$ be the $(i,j)$th element of $\boldsymbol{V}^{-1}$ and $\gamma_i$ be the $i$th entry of $\boldsymbol{F}_0-\boldsymbol{Z\theta}_0$. By \eqref{eq:2approx}, we have uniformly over $1\leq i\leq n$ that $|\gamma_i|\lesssim\sum_{k=1}^d\delta_{n,k}^{\alpha_k}$. Now using the fact that for positive definite $\boldsymbol{G}$, $g_{hj}\leq\sqrt{g_{hh}g_{jj}}$ by the Cauchy-Schwarz inequality, we have for $0\leq h,j\leq W$,
\begin{align*}
[(\boldsymbol{Z}^T\boldsymbol{Z}+\boldsymbol{V}^{-1})^{-1}]_{hj}&\leq\sqrt{
[(\boldsymbol{Z}^T\boldsymbol{Z}+\boldsymbol{V}^{-1})^{-1}]_{hh}[(\boldsymbol{Z}^T\boldsymbol{Z}+\boldsymbol{V}^{-1})^{-1}]_{jj}}\nonumber\\
&\leq\sqrt{[(\boldsymbol{Z}^T\boldsymbol{Z})^{-1}]_{hh}[(\boldsymbol{Z}^T\boldsymbol{Z})^{-1}]_{jj}}
\lesssim n^{-1}\boldsymbol{\delta}_n^{-(\boldsymbol{i}_h+\boldsymbol{i}_j)}.
\end{align*}
Since $\boldsymbol{z}_j\in \mathcal{Q}$, $j=1,\dotsc,n_2$, we have $|\boldsymbol{z}_j^{\boldsymbol{i}}|\leq\boldsymbol{\delta}_n^{\boldsymbol{i}}$ for $\boldsymbol{i}\leq\boldsymbol{m}_{\boldsymbol{\alpha}}$. Therefore, since $n_2\leq n$, $[(\boldsymbol{Z}^T\boldsymbol{Z}+\boldsymbol{V}^{-1})^{-1}\boldsymbol{Z}^T(\boldsymbol{F}_0-\boldsymbol{Z\theta}_0)]_h$ is
\begin{align*}
\beta_{h0}\sum_{j=1}^{n_2}\boldsymbol{z}_j^{\boldsymbol{i}_0}\gamma_j+\cdots+\beta_{hW}\sum_{j=1}^{n_2}\boldsymbol{z}_j^{\boldsymbol{i}_W}\gamma_j
\lesssim\boldsymbol{\delta}_n^{-\boldsymbol{i}_h}\sum_{k=1}^d\delta_{n,k}^{\alpha_k}.
\end{align*}
It remains to bound each entry of the last term in \eqref{eq:2thetabias}. Since $|\theta_{0,\boldsymbol{i}_j}-\xi_{0,\boldsymbol{i}_j}|\le |\theta_{0,\boldsymbol{i}_j}|+|\xi_{0,\boldsymbol{i}_j}|=O_{P_0}(1)$ for $j=0,\dotsc,W$, then Lemma \ref{lem:random} and the choice of $\boldsymbol{V}$ imply that  $[(\boldsymbol{Z}^T\boldsymbol{Z}+\boldsymbol{V}^{-1})^{-1}\boldsymbol{V}^{-1}(\boldsymbol{\xi}-\boldsymbol{\theta}_0)]_h$ is
\begin{align*}
\beta_{h0}\sum_{j=0}^W\kappa_{0j}(\xi_{\boldsymbol{i}_j}-\theta_{0,\boldsymbol{i}_j})+\cdots+\beta_{hW}\sum_{j=0}^W\kappa_{Wj}(\xi_{\boldsymbol{i}_j}-\theta_{0,\boldsymbol{i}_j})
\lesssim n^{-1}\boldsymbol{\delta}_n^{-\boldsymbol{i}_h}.
\end{align*}
Combining the bounds derived back into \eqref{eq:2thetabias}, the squared bias $\left[\mathrm{E}(\theta_{\boldsymbol{i}_h}|\boldsymbol{Y},\sigma^2)-\theta_{0,\boldsymbol{i}_h}\right]^2$ is $O_{P_0}\left[n^{-2}\boldsymbol{\delta}_n^{-2\boldsymbol{i}_h}+\boldsymbol{\delta}_n^{-2\boldsymbol{i}_h}\sum_{k=1}^d\delta_{n,k}^{2\alpha_k}\right]$. The result follows in view of the bounds established and \eqref{eq:2vartheta}.
\end{proof}

\begin{lemma}\label{th:2pointtheta}
Uniformly over $\|f_0\|_{\boldsymbol{\alpha},\infty}\leq R$, for any $\boldsymbol{r}\leq\boldsymbol{m}_{\boldsymbol{\alpha}}$, $\boldsymbol{x}\in \mathcal{Q}$ and $m_n\rightarrow\infty$,
$\mathrm{E}_0\Pi(|D^{\boldsymbol{r}}f_{\boldsymbol{\theta}}(\boldsymbol{x})- D^{\boldsymbol{r}}f_{0,\boldsymbol{z}}(\boldsymbol{x})|>m_n\epsilon_{n,\boldsymbol{r}}|\boldsymbol{Y})\rightarrow0$, where $\epsilon_{n,\boldsymbol{r}}:=\boldsymbol{\delta}_n^{-\boldsymbol{r}}(n^{-1/2}+\sum_{k=1}^d\delta_{n,k}^{\alpha_k})$.
\end{lemma}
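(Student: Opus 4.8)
The plan is to split the pointwise error into a stochastic part, controlled by the posterior spread of $\boldsymbol{\theta}$ about the projection coefficients $\boldsymbol{\theta}_0$, and a deterministic Taylor bias, and to show both are $O_{P_0}(\epsilon_{n,\boldsymbol{r}})$. Writing
$D^{\boldsymbol{r}}f_{\boldsymbol{\theta}}(\boldsymbol{x})-D^{\boldsymbol{r}}f_{0,\boldsymbol{z}}(\boldsymbol{x})=[D^{\boldsymbol{r}}f_{\boldsymbol{\theta}}(\boldsymbol{x})-D^{\boldsymbol{r}}f_{\boldsymbol{\theta}_0}(\boldsymbol{x})]+[D^{\boldsymbol{r}}f_{\boldsymbol{\theta}_0}(\boldsymbol{x})-D^{\boldsymbol{r}}f_{0,\boldsymbol{z}}(\boldsymbol{x})]$,
the first bracket is random under the posterior while the second is fixed given the data. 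I would bound the conditional (given $\boldsymbol{Y},\sigma^2$) second moment of the whole difference by Minkowski's inequality and then invoke Markov's inequality, finally integrating out $\sigma^2$ over $\mathcal{K}_n$.

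For the stochastic part, since $\Pi(\boldsymbol{\theta}|\boldsymbol{Y},\sigma^2)$ is the Gaussian in \eqref{eq:ptheta} and $D^{\boldsymbol{r}}f_{\boldsymbol{\theta}}(\boldsymbol{x})$ is the linear functional of $\boldsymbol{\theta}$ in \eqref{eq:polyprime}, the $L^2(\Pi(\cdot|\boldsymbol{Y},\sigma^2))$ triangle inequality gives
\begin{align*}
\left(\mathrm{E}\left[(D^{\boldsymbol{r}}f_{\boldsymbol{\theta}}(\boldsymbol{x})-D^{\boldsymbol{r}}f_{\boldsymbol{\theta}_0}(\boldsymbol{x}))^2\middle|\boldsymbol{Y},\sigma^2\right]\right)^{1/2}\leq\sum_{\boldsymbol{r}\leq\boldsymbol{i}\leq\boldsymbol{m}_{\boldsymbol{\alpha}}}\frac{\boldsymbol{i}!}{(\boldsymbol{i}-\boldsymbol{r})!}|\boldsymbol{x}^{\boldsymbol{i}-\boldsymbol{r}}|\left(\mathrm{E}[(\theta_{\boldsymbol{i}}-\theta_{0,\boldsymbol{i}})^2|\boldsymbol{Y},\sigma^2]\right)^{1/2}.
\end{align*}
Since $\boldsymbol{x}\in\mathcal{Q}$ forces $|\boldsymbol{x}^{\boldsymbol{i}-\boldsymbol{r}}|\leq\boldsymbol{\delta}_n^{\boldsymbol{i}-\boldsymbol{r}}$, and Lemma \ref{lem:2thetamse} yields a root-MSE of order $\boldsymbol{\delta}_n^{-\boldsymbol{i}}(n^{-1/2}+\sum_k\delta_{n,k}^{\alpha_k})$ (using $\sqrt{n^{-1}+\sum_k\delta_{n,k}^{2\alpha_k}}\asymp n^{-1/2}+\sum_k\delta_{n,k}^{\alpha_k}$), each summand is of order $\boldsymbol{\delta}_n^{\boldsymbol{i}-\boldsymbol{r}}\boldsymbol{\delta}_n^{-\boldsymbol{i}}(n^{-1/2}+\sum_k\delta_{n,k}^{\alpha_k})=\epsilon_{n,\boldsymbol{r}}$. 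As there are at most $W+1$ terms, this part is $O_{P_0}(\epsilon_{n,\boldsymbol{r}})$ uniformly over $\sigma^2\in\mathcal{K}_n$ and over $\|f_0\|_{\boldsymbol{\alpha},\infty}\leq R$.

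For the deterministic bias, note $D^{\boldsymbol{r}}f_{\boldsymbol{\theta}_0}(\boldsymbol{z})=D^{\boldsymbol{r}}T_{\boldsymbol{\mu}_0}f_0(\boldsymbol{z}+\widetilde{\boldsymbol{\mu}})$ and $D^{\boldsymbol{r}}f_{0,\boldsymbol{z}}(\boldsymbol{z})=D^{\boldsymbol{r}}f_0(\boldsymbol{z}+\widetilde{\boldsymbol{\mu}})$, so the anisotropic H\"older property \eqref{eq:isoholder} at base point $\boldsymbol{\mu}_0$ gives $|D^{\boldsymbol{r}}f_{\boldsymbol{\theta}_0}(\boldsymbol{x})-D^{\boldsymbol{r}}f_{0,\boldsymbol{z}}(\boldsymbol{x})|\lesssim\sum_k|x_k+\widetilde{\mu}_k-\mu_{0k}|^{\alpha_k-r_k}$; this is legitimate for the derivative orders entering the mode/maximum analysis ($\boldsymbol{r}=\boldsymbol{0},\boldsymbol{e}_k$, and Hessian terms), for which $\sum_k r_k/\alpha_k<1$ holds since every $\alpha_k>2$. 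Bounding $|x_k+\widetilde{\mu}_k-\mu_{0k}|\leq\delta_{n,k}+\|\widetilde{\boldsymbol{\mu}}-\boldsymbol{\mu}_0\|$ and recalling $\|\widetilde{\boldsymbol{\mu}}-\boldsymbol{\mu}_0\|=O_{P_0}(\epsilon_n)=o_{P_0}(\delta_{n,k})$ (as established just before \eqref{eq:2approx}), together with $|a+b|^{\alpha_k-r_k}\lesssim|a|^{\alpha_k-r_k}+|b|^{\alpha_k-r_k}$ (valid as $\alpha_k-r_k\geq1$), the bias is $\lesssim\sum_k\delta_{n,k}^{\alpha_k-r_k}$. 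Because $\delta_{n,l}\to0$ makes $\boldsymbol{\delta}_n^{-\boldsymbol{r}}\geq\delta_{n,k}^{-r_k}$ for each $k$, we obtain $\sum_k\delta_{n,k}^{\alpha_k-r_k}\leq\boldsymbol{\delta}_n^{-\boldsymbol{r}}\sum_k\delta_{n,k}^{\alpha_k}\leq\epsilon_{n,\boldsymbol{r}}$, so the bias too is $O_{P_0}(\epsilon_{n,\boldsymbol{r}})$.

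Combining via Minkowski gives $\mathrm{E}[(D^{\boldsymbol{r}}f_{\boldsymbol{\theta}}(\boldsymbol{x})-D^{\boldsymbol{r}}f_{0,\boldsymbol{z}}(\boldsymbol{x}))^2|\boldsymbol{Y},\sigma^2]=O_{P_0}(\epsilon_{n,\boldsymbol{r}}^2)$ uniformly for $\sigma^2\in\mathcal{K}_n$, whence Markov yields $\Pi(|D^{\boldsymbol{r}}f_{\boldsymbol{\theta}}(\boldsymbol{x})-D^{\boldsymbol{r}}f_{0,\boldsymbol{z}}(\boldsymbol{x})|>m_n\epsilon_{n,\boldsymbol{r}}|\boldsymbol{Y},\sigma^2)=O_{P_0}(m_n^{-2})$. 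I would then split the unconditional posterior over $\{\sigma^2\in\mathcal{K}_n\}$ and its complement: on $\mathcal{K}_n$ use the uniform bound just obtained, and dominate the complement by $\Pi(\sigma^2\notin\mathcal{K}_n|\boldsymbol{Y})$, which $\to0$ in $P_0$-probability by Proposition \ref{prop:var2}(b). Both contributions are bounded by $1$ and tend to $0$ in $P_0$-probability, so bounded convergence after taking $\mathrm{E}_0$ gives the claim. The main care point is the bookkeeping of the $\boldsymbol{\delta}_n$-powers (the cancellation $\boldsymbol{\delta}_n^{\boldsymbol{i}-\boldsymbol{r}}\boldsymbol{\delta}_n^{-\boldsymbol{i}}=\boldsymbol{\delta}_n^{-\boldsymbol{r}}$ and the domination of $\sum_k\delta_{n,k}^{\alpha_k-r_k}$), along with justifying the use of $\boldsymbol{\mu}_0$ as Taylor base point even though sampling is centered at $\widetilde{\boldsymbol{\mu}}$, which hinges on $\|\widetilde{\boldsymbol{\mu}}-\boldsymbol{\mu}_0\|$ being of smaller order than every $\delta_{n,k}$.
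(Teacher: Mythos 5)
Your proposal is correct and follows essentially the same route as the paper's proof: the same decomposition into the stochastic part $D^{\boldsymbol{r}}f_{\boldsymbol{\theta}}-D^{\boldsymbol{r}}f_{\boldsymbol{\theta}_0}$ (controlled term-by-term via Lemma \ref{lem:2thetamse} and $|\boldsymbol{x}^{\boldsymbol{i}-\boldsymbol{r}}|\leq\boldsymbol{\delta}_n^{\boldsymbol{i}-\boldsymbol{r}}$ on $\mathcal{Q}$) plus the Taylor bias $D^{\boldsymbol{r}}f_{\boldsymbol{\theta}_0}-D^{\boldsymbol{r}}f_{0,\boldsymbol{z}}$ (controlled via \eqref{eq:isoholder} and $\|\widetilde{\boldsymbol{\mu}}-\boldsymbol{\mu}_0\|=o_{P_0}(\delta_{n,k})$), followed by Markov's inequality and the split over $\{\sigma^2\in\mathcal{K}_n\}$ versus its complement using Proposition \ref{prop:var2}. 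The only differences are cosmetic (the $L^2$ triangle inequality in place of the paper's power-mean inequality, and your explicit caveat that \eqref{eq:isoholder} requires $\sum_k r_k/\alpha_k<1$, a point the paper glosses over but which is harmless for the derivative orders actually used).
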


\begin{proof}
In view of \eqref{eq:polyprime},
\begin{align*}
D^{\boldsymbol{r}}f_{\boldsymbol{\theta}}(\boldsymbol{x})-D^{\boldsymbol{r}}f_{\boldsymbol{\theta}_0}(\boldsymbol{x})
=\boldsymbol{r}!(\theta_{\boldsymbol{r}}-\theta_{0,\boldsymbol{r}})+\sum_{\boldsymbol{r}\leq\boldsymbol{i}\leq\boldsymbol{m}_{\boldsymbol{\alpha}},\boldsymbol{i}\neq\boldsymbol{r}}\frac{\boldsymbol{i}!}{(\boldsymbol{i}-\boldsymbol{r})!}(\theta_{\boldsymbol{i}}-\theta_{0,\boldsymbol{i}})\boldsymbol{x}^{\boldsymbol{i}-\boldsymbol{r}}.
\end{align*}
Observe that for any $\boldsymbol{x}\in \mathcal{Q}$, $|\boldsymbol{x}^{\boldsymbol{i}-\boldsymbol{r}}|\leq\boldsymbol{\delta}_n^{\boldsymbol{i}-\boldsymbol{r}}$. Also, by noting that $r_k\leq i_k\leq\alpha_k-1$ for $k=1,\dotsc,d$, we have both $\boldsymbol{r}!,\boldsymbol{i}!\leq\prod_{k=1}^d(\alpha_k-1)$. Using the fact $(\sum_{i=1}^n|b_i|)^p\leq n^{p-1}\sum_{i=1}^n|b_i|^p$ for $p\geq1$, $|D^{\boldsymbol{r}}f_{\boldsymbol{\theta}}(\boldsymbol{x})-D^{\boldsymbol{r}}f_{\boldsymbol{\theta}_0}(\boldsymbol{x})|^2$ is bounded above up to a constant multiple by
\begin{align}\label{eq:2primepoint}
|\theta_{\boldsymbol{r}}-\theta_{0,\boldsymbol{r}}|^2+\sum_{\boldsymbol{r}\leq\boldsymbol{i}\leq\boldsymbol{m}_{\boldsymbol{\alpha}},\boldsymbol{i}\neq\boldsymbol{r}}|\theta_{\boldsymbol{i}}-\theta_{0,\boldsymbol{i}}|^2\boldsymbol{\delta}_n^{2\boldsymbol{i}-2\boldsymbol{r}}.
\end{align}
Therefore, for any $\boldsymbol{r}\leq\boldsymbol{m}_{\boldsymbol{\alpha}}$ and any $\boldsymbol{x}\in \mathcal{Q}$, we have uniformly over $\|f_0\|_{\boldsymbol{\alpha},\infty}\leq R$ that $\mathrm{E}_0\sup_{\sigma^2\in\mathcal{K}_n}\mathrm{E}(|D^{\boldsymbol{r}}f_{\boldsymbol{\theta}}(\boldsymbol{x})-D^{\boldsymbol{r}}
f_{\boldsymbol{\theta}_0}(\boldsymbol{x})|^2|\boldsymbol{Y},\sigma^2)$ is bounded up to a constant multiple by
\begin{align}\label{eq:2varbound}
&\mathrm{E}_0\sup_{\sigma^2\in\mathcal{K}_n}\mathrm{E}[(\theta_{\boldsymbol{r}}-\theta_{0,\boldsymbol{r}})^2
|\boldsymbol{Y},\sigma^2]+\sum_{\boldsymbol{r}\leq\boldsymbol{i}\leq\boldsymbol{m}_{\boldsymbol{\alpha}},\boldsymbol{i}\neq\boldsymbol{r}}\boldsymbol{\delta}_n^{2\boldsymbol{i}-2\boldsymbol{r}}
\mathrm{E}_0\sup_{\sigma^2\in\mathcal{K}_n}\mathrm{E}[(\theta_{\boldsymbol{i}}-\theta_{0,\boldsymbol{i}})^2|\boldsymbol{Y},\sigma^2].
\end{align}
In view of Lemma \ref{lem:2thetamse}, the first term is bounded by $\boldsymbol{\delta}_n^{-2\boldsymbol{r}}(n^{-1}+\sum_{k=1}^d\delta_{n,k}^{2\alpha_k})$. By noting that the sum over $\{\boldsymbol{i}:\boldsymbol{r}\leq\boldsymbol{i}\leq\boldsymbol{m}_{\boldsymbol{\alpha}},\boldsymbol{i}\neq\boldsymbol{r}\}$ has at most $\prod_{k=1}^d\alpha_k$ terms, another application of Lemma \ref{lem:2thetamse} implies that the second term in \eqref{eq:2varbound} is bounded above by
\begin{align*}
\sum_{\boldsymbol{r}\leq\boldsymbol{i}\leq\boldsymbol{m}_{\boldsymbol{\alpha}},\boldsymbol{i}\neq\boldsymbol{r}}\boldsymbol{\delta}_n^{2\boldsymbol{i}-2\boldsymbol{r}}\left[\boldsymbol{\delta}_n^{-2\boldsymbol{i}}\left(\frac{1}{n}+\sum_{k=1}^d\delta_{n,k}^{2\alpha_k}\right)\right]
\lesssim\boldsymbol{\delta}_n^{-2\boldsymbol{r}}\left(\frac{1}{n}+\sum_{k=1}^d\delta_{n,k}^{2\alpha_k}\right).
\end{align*}
Using the inequality $|a+b|^r\leq\mathrm{max}(1,2^{r-1})(|a|^r+|b|^r),r>0$ and \eqref{eq:isoholder}, we have $|D^{\boldsymbol{r}}f_{\boldsymbol{\theta}_0}(\boldsymbol{x})-D^{\boldsymbol{r}}f_{0,\boldsymbol{z}}(\boldsymbol{x})|$ is
\begin{align}\label{eq:2pointbiasbound}
|D^{\boldsymbol{r}}T_{\boldsymbol{\mu}_0}(\boldsymbol{x}+\widetilde{\boldsymbol{\mu}})-D^{\boldsymbol{r}}f_0(\boldsymbol{x}+\widetilde{\boldsymbol{\mu}})|&\lesssim\sum_{k=1}^d|x_k+\widetilde{\mu}_k-\mu_{0,k}|^{\alpha_k-r_k}\nonumber\\
&\lesssim\sum_{k=1}^d\delta_{n,k}^{\alpha_k-r_k}+\sum_{k=1}^d|\widetilde{\mu}_k-\mu_{0,k}|^{\alpha_k-r_k},
\end{align}
and $\mathrm{E}_0|D^{\boldsymbol{r}}f_{\boldsymbol{\theta}_0}(\boldsymbol{x})-D^{\boldsymbol{r}}f_{0,\boldsymbol{z}}(\boldsymbol{x})|^2
\lesssim\sum_{k=1}^d\delta_{n,k}^{2\alpha_k-2r_k}$ uniformly in $\|f_0\|_{\boldsymbol{\alpha},\infty}\leq R$ by \eqref{eq:2approx}.

Define $P_{n,\boldsymbol{r}}(\boldsymbol{x}):=\mathrm{E}_0\sup_{\sigma^2\in\mathcal{K}_n}\mathrm{E}[(D^{\boldsymbol{r}}f_{\boldsymbol{\theta}}(\boldsymbol{x})-D^{\boldsymbol{r}}f_{0,\boldsymbol{z}}
(\boldsymbol{x}))^2|\boldsymbol{Y},\sigma^2]$. Combining all the bounds established and \eqref{eq:2varbound}, we have uniformly over $\|f_0\|_{\boldsymbol{\alpha},\infty}\leq R$,
\begin{align*}
P_{n,\boldsymbol{r}}(\boldsymbol{x}) &\lesssim \mathrm{E}_0\sup_{\sigma^2\in\mathcal{K}_n}\mathrm{E}(|D^{\boldsymbol{r}}f_{\boldsymbol{\theta}}(\boldsymbol{x})-D^{\boldsymbol{r}}
f_{\boldsymbol{\theta}_0}(\boldsymbol{x})|^2|\boldsymbol{Y},\sigma^2)+\mathrm{E}_0|D^{\boldsymbol{r}}f_{\boldsymbol{\theta}_0}(\boldsymbol{x})-D^{\boldsymbol{r}}f_{0,\boldsymbol{z}}(\boldsymbol{x})|^2\\
&\lesssim\boldsymbol{\delta}_n^{-2\boldsymbol{r}}\left(\frac{1}{n}+\sum_{k=1}^d\delta_{n,k}^{2\alpha_k}\right)+\sum_{k=1}^d\delta_{n,k}^{2\alpha_k-2r_k}\lesssim\epsilon_{n,\boldsymbol{r}}^2.
\end{align*}
By Proposition~\ref{prop:var2}, $P_0\left(\widetilde{\sigma}_{*}^2\in\mathcal{K}_n\right)\rightarrow1$ as $n\rightarrow\infty$ uniformly in $\|f_0\|_{\boldsymbol{\alpha},\infty}\leq R$. For the empirical Bayes posterior $\Pi(\cdot|\boldsymbol{Y})\equiv\Pi_{\widetilde{\sigma}_{*}}(\cdot|\boldsymbol{Y})$ and by Markov's inequality, we have for any $m_n\rightarrow\infty$,
\begin{align}\label{eq:2empbayes}
&\mathrm{E}_0\Pi_{\widetilde{\sigma}_{*}}(|D^{\boldsymbol{r}}f_{\boldsymbol{\theta}}(\boldsymbol{x})-D^{\boldsymbol{r}}f_{0,\boldsymbol{z}}(\boldsymbol{x})|>m_n\epsilon_{n,\boldsymbol{r}}|\boldsymbol{Y})
\leq\frac{P_{n,\boldsymbol{r}}(\boldsymbol{x})}{m_n^2\epsilon_{n,\boldsymbol{r}}^2}+o(1)\rightarrow0,
\end{align}
uniformly over $\|f_0\|_{\boldsymbol{\alpha},\infty}\leq R$. For the hierarchical Bayes procedure, we have for any $m_n\rightarrow\infty$ that $\mathrm{E}_0\Pi(|D^{\boldsymbol{r}}f_{\boldsymbol{\theta}}(\boldsymbol{x})-D^{\boldsymbol{r}}f_{0,\boldsymbol{z}}(\boldsymbol{x})|>m_n\epsilon_{n,\boldsymbol{r}}|\boldsymbol{Y})$ is uniformly over $\|f_0\|_{\boldsymbol{\alpha},\infty}\leq R$ bounded above by
\begin{equation}
\label{eq:2fullbayes}
P_{n,\boldsymbol{r}}(\boldsymbol{x})/(m_n\epsilon_{n,\boldsymbol{r}})^2+\mathrm{E}_0\Pi\left(\sigma^2 \not \in \mathcal{K}_n \middle|\boldsymbol{Y}\right).
\end{equation}
The first term is $o(1)$ since $P_{n,\boldsymbol{r}}(\boldsymbol{x})\lesssim\epsilon_{n,\boldsymbol{r}}^2$, while the second term goes to zero by Proposition~\ref{prop:var2}.
\end{proof}

An immediate consequence of the previous lemma is the following:

\begin{corollary}\label{cor:2thetainfty}
Uniformly in $\|f_0\|_{\boldsymbol{\alpha},\infty}\leq R$, we have for any $\boldsymbol{r}\leq\boldsymbol{m}_{\boldsymbol{\alpha}}$ and $m_n\rightarrow\infty$ that
$\mathrm{E}_0\Pi(\|D^{\boldsymbol{r}}f_{\boldsymbol{\theta}}- D^{\boldsymbol{r}}f_{0,\boldsymbol{z}}\|_\infty>m_n\epsilon_{n,\boldsymbol{r}}|\boldsymbol{Y})\rightarrow0$.
\end{corollary}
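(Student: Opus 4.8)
The plan is to upgrade the pointwise control of Lemma \ref{th:2pointtheta} to a uniform statement over $\mathcal{Q}$, and the crucial simplification is that $D^{\boldsymbol{r}}f_{\boldsymbol{\theta}}$ is a polynomial with only $\prod_{k=1}^d\alpha_k$ coefficients, so its supremum on $\mathcal{Q}$ is governed directly by those coefficients---no chaining, covering, or Gaussian-process maximal inequality is required, in sharp contrast with the B-spline arguments of Section \ref{sec:tsupnormf}. First I would write, using \eqref{eq:polyprime} and the triangle inequality,
$$\|D^{\boldsymbol{r}}f_{\boldsymbol{\theta}}-D^{\boldsymbol{r}}f_{\boldsymbol{\theta}_0}\|_\infty \le \sum_{\boldsymbol{r}\le\boldsymbol{i}\le\boldsymbol{m}_{\boldsymbol{\alpha}}}\frac{\boldsymbol{i}!}{(\boldsymbol{i}-\boldsymbol{r})!}|\theta_{\boldsymbol{i}}-\theta_{0,\boldsymbol{i}}|\sup_{\boldsymbol{x}\in\mathcal{Q}}|\boldsymbol{x}^{\boldsymbol{i}-\boldsymbol{r}}|,$$
and then invoke $\sup_{\boldsymbol{x}\in\mathcal{Q}}|\boldsymbol{x}^{\boldsymbol{i}-\boldsymbol{r}}|=\boldsymbol{\delta}_n^{\boldsymbol{i}-\boldsymbol{r}}$. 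Squaring and applying the finite-sum Cauchy--Schwarz inequality then reproduces exactly the bound \eqref{eq:2primepoint}, but with $\boldsymbol{x}^{\boldsymbol{i}-\boldsymbol{r}}$ replaced by its maximal value $\boldsymbol{\delta}_n^{\boldsymbol{i}-\boldsymbol{r}}$; the resulting expression is deterministic in $\boldsymbol{x}$.

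Next I would take $\mathrm{E}_0\sup_{\sigma^2\in\mathcal{K}_n}\mathrm{E}[\,\cdot\,|\boldsymbol{Y},\sigma^2]$ of this squared bound. Since the right-hand side no longer depends on $\boldsymbol{x}$, this is literally the quantity \eqref{eq:2varbound} already estimated in the proof of Lemma \ref{th:2pointtheta}, so by Lemma \ref{lem:2thetamse} it is $\lesssim\epsilon_{n,\boldsymbol{r}}^2$ uniformly over $\|f_0\|_{\boldsymbol{\alpha},\infty}\le R$. For the bias piece I would take the supremum over $\mathcal{Q}$ directly in \eqref{eq:2pointbiasbound}: since $|x_k|\le\delta_{n,k}$ on $\mathcal{Q}$ and $|\widetilde{\mu}_k-\mu_{0,k}|=o_{P_0}(\delta_{n,k})$, one gets $\|D^{\boldsymbol{r}}f_{\boldsymbol{\theta}_0}-D^{\boldsymbol{r}}f_{0,\boldsymbol{z}}\|_\infty\lesssim\sum_{k=1}^d\delta_{n,k}^{\alpha_k-r_k}$, whence $\mathrm{E}_0\|D^{\boldsymbol{r}}f_{\boldsymbol{\theta}_0}-D^{\boldsymbol{r}}f_{0,\boldsymbol{z}}\|_\infty^2\lesssim\sum_{k=1}^d\delta_{n,k}^{2\alpha_k-2r_k}\lesssim\epsilon_{n,\boldsymbol{r}}^2$ by \eqref{eq:2approx}. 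Writing $\bar{P}_{n,\boldsymbol{r}}:=\mathrm{E}_0\sup_{\sigma^2\in\mathcal{K}_n}\mathrm{E}[\|D^{\boldsymbol{r}}f_{\boldsymbol{\theta}}-D^{\boldsymbol{r}}f_{0,\boldsymbol{z}}\|_\infty^2\,|\boldsymbol{Y},\sigma^2]$ and combining the two pieces through $\|a\|^2\lesssim\|a-b\|^2+\|b\|^2$ gives $\bar{P}_{n,\boldsymbol{r}}\lesssim\epsilon_{n,\boldsymbol{r}}^2$.

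Finally I would conclude exactly as in the pointwise case. For the empirical-Bayes posterior, Markov's inequality together with $P_0(\widetilde{\sigma}_{*}^2\in\mathcal{K}_n)\to1$ from Proposition \ref{prop:var2} yields $\mathrm{E}_0\Pi_{\widetilde{\sigma}_{*}}(\|D^{\boldsymbol{r}}f_{\boldsymbol{\theta}}-D^{\boldsymbol{r}}f_{0,\boldsymbol{z}}\|_\infty>m_n\epsilon_{n,\boldsymbol{r}}\,|\boldsymbol{Y})\le \bar{P}_{n,\boldsymbol{r}}/(m_n^2\epsilon_{n,\boldsymbol{r}}^2)+o(1)\to0$, mirroring \eqref{eq:2empbayes}; for the hierarchical posterior one adds the term $\mathrm{E}_0\Pi(\sigma^2\notin\mathcal{K}_n\,|\boldsymbol{Y})$, which vanishes by Proposition \ref{prop:var2}, mirroring \eqref{eq:2fullbayes}. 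Both bounds are uniform over $\|f_0\|_{\boldsymbol{\alpha},\infty}\le R$. The only genuine ``obstacle'' here is conceptual rather than technical: one must notice that passing from a point to the whole cube costs nothing, because the supremum of a polynomial over $\mathcal{Q}$ is controlled (up to the harmless constants $\boldsymbol{i}!/(\boldsymbol{i}-\boldsymbol{r})!$) by simply replacing each monomial by its largest value $\boldsymbol{\delta}_n^{\boldsymbol{i}-\boldsymbol{r}}$. Thus the corollary follows almost immediately from Lemma \ref{th:2pointtheta}, with no entropy or Borell-type argument of the kind needed in the single-stage B-spline setting.
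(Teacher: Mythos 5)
Your proposal is correct and follows essentially the same route as the paper's own proof: both upgrade the pointwise bound of Lemma \ref{th:2pointtheta} to the supremum over $\mathcal{Q}$ by observing that $|\boldsymbol{x}^{\boldsymbol{i}-\boldsymbol{r}}|\leq\boldsymbol{\delta}_n^{\boldsymbol{i}-\boldsymbol{r}}$ on $\mathcal{Q}$, so that the bounds \eqref{eq:2varbound} (via Lemma \ref{lem:2thetamse}) and \eqref{eq:2pointbiasbound} apply uniformly in $\boldsymbol{x}$. The conclusion via Markov's inequality and Proposition \ref{prop:var2}, treating the empirical and hierarchical cases as in \eqref{eq:2empbayes} and \eqref{eq:2fullbayes}, is likewise identical to the paper's argument.
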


\begin{proof}
By (8.7), we have
\begin{align}
\|D^{\boldsymbol{r}}f_{\boldsymbol{\theta}}-D^{\boldsymbol{r}}f_{\boldsymbol{\theta}_0}\|_\infty&=\sup_{\boldsymbol{x}\in \mathcal{Q}}|D^{\boldsymbol{r}}f_{\boldsymbol{\theta}}(\boldsymbol{x})-D^{\boldsymbol{r}}f_{\boldsymbol{\theta}_0}(\boldsymbol{x})|\nonumber\\
&\lesssim|\theta_{\boldsymbol{r}}-\theta_{0,\boldsymbol{r}}|+\sum_{\boldsymbol{r}\leq\boldsymbol{i}\leq\boldsymbol{m}_{\boldsymbol{\alpha}},\boldsymbol{i}\neq\boldsymbol{r}}|\theta_{\boldsymbol{i}}-\theta_{0,\boldsymbol{i}}|\boldsymbol{\delta}_n^{\boldsymbol{i}-\boldsymbol{r}}.
\end{align}
Hence, the upper bound (8.8) is applicable and uniformly over $\|f_0\|_{\boldsymbol{\alpha},\infty}\leq R$, we will have $\mathrm{E}_0\sup_{\sigma^2\in\mathcal{K}_n}\mathrm{E}[\|D^{\boldsymbol{r}}f_{\boldsymbol{\theta}}-D^{\boldsymbol{r}}f_{\boldsymbol{\theta}_0}\|_\infty^2|\boldsymbol{Y},\sigma^2]\lesssim\delta_n^{-2\boldsymbol{r}}(n^{-1}+\sum_{k=1}^d\delta_{n,k}^{2\alpha_k})$. Moreover, since the bound in (8.9) is uniform for all $\boldsymbol{x}\in \mathcal{Q}$, this implies that $\mathrm{E}_0\|D^{\boldsymbol{r}}f_{\boldsymbol{\theta}_0}-D^{\boldsymbol{r}}f_{0,\boldsymbol{z}}\|_\infty^2\lesssim\sum_{k=1}^d\delta_{n,k}^{2\alpha_k-2r_k}$. Therefore, we conclude that uniformly over $\|f_0\|_{\boldsymbol{\alpha},\infty}\leq R$,
\begin{eqnarray*}
\lefteqn{\mathrm{E}_0\sup_{\sigma^2\in\mathcal{K}_n}\mathrm{E}[\|D^{\boldsymbol{r}}f_{\boldsymbol{\theta}}-D^{\boldsymbol{r}}f_{0,\boldsymbol{z}}\|_\infty^2|\boldsymbol{Y},\sigma^2]}\\
&&\lesssim  \mathrm{E}_0\sup_{\sigma^2\in\mathcal{K}_n}\mathrm{E}[\|D^{\boldsymbol{r}}f_{\boldsymbol{\theta}}-D^{\boldsymbol{r}}f_{\boldsymbol{\theta}_0}\|_\infty^2|\boldsymbol{Y},\sigma^2] +
\mathrm{E}_0\|D^{\boldsymbol{r}}f_{\boldsymbol{\theta}_0}-D^{\boldsymbol{r}}f_{0,\boldsymbol{z}}\|_\infty^2\\
&&\lesssim\boldsymbol{\delta}_n^{-2\boldsymbol{r}}\left(\frac{1}{n}+\sum_{k=1}^d\delta_{n,k}^{2\alpha_k}\right).
\end{eqnarray*}
The empirical and hierarchical posterior contraction rates then follow from (8.10) and (8.11) with absolute values replaced by sup-norms.
\end{proof}

We are now ready to prove Theorem \ref{th:2M}.

\begin{proof}[Proof of Theorem \ref{th:2M}]
We shall prove only the empirical Bayes case as the hierarchical Bayes case follows the same steps. Recall that $\boldsymbol{\mu}=\widetilde{\boldsymbol{\mu}}+\boldsymbol{\mu}_{\boldsymbol{z}}$. As a consequence of Theorem \ref{th:murate} and our choice of $\delta_{n,k},k=1,\dotsc,d$, we have $P_0(\boldsymbol{\mu}_0-\widetilde{\boldsymbol{\mu}}\in\mathcal{Q})\rightarrow1$. Therefore by \eqref{eq:muinequality},
\begin{align*}
\|\boldsymbol{\mu}-\boldsymbol{\mu}_0\|=\|\boldsymbol{\mu}_{\boldsymbol{z}}-(\boldsymbol{\mu}_0-\widetilde{\boldsymbol{\mu}})\|
\leq\frac{\sqrt{d}}{\lambda_0}\max_{1\leq k\leq d}\sup_{\boldsymbol{x}\in \mathcal{Q}}|D_kf_{\boldsymbol{\theta}}(\boldsymbol{x})-D_kf_{0,\boldsymbol{z}}(\boldsymbol{x})|.
\end{align*}
Let $\tau_{n,k}:=\delta_{n,k}^{-1}(n^{-1/2}+\sum_{k=1}^d\delta_{n,k}^{\alpha_k})$. Using this bound and Corollary \ref{cor:2thetainfty} with $\boldsymbol{r}=\boldsymbol{e}_k$, we have for any $m_n\rightarrow\infty$ that $\mathrm{E}_0\Pi(\|\boldsymbol{\mu}-\boldsymbol{\mu}_0\|>m_n\max_{1\leq k\leq d}\tau_{n,k}|\boldsymbol{Y})$ is bounded above by
\begin{align*}
\sum_{k=1}^d\mathrm{E}_0\Pi\left(\|D_k f_{\boldsymbol{\theta}}-D_kf_{0,\boldsymbol{z}}\|_\infty>\frac{\lambda_0}{\sqrt{d}}m_n\max_{1\leq k\leq d}\tau_{n,k}\middle|\boldsymbol{Y}\right)\rightarrow0.
\end{align*}

By definition, $M=f_{\boldsymbol{\theta}}(\boldsymbol{\mu}_{\boldsymbol{z}})$ and $M_0=f_{0,\boldsymbol{z}}(\boldsymbol{\mu}_0-\widetilde{\boldsymbol{\mu}})$. Then by \eqref{eq:maxinequality}, $|M-M_0|\leq\sup_{\boldsymbol{x}\in\mathcal{Q}}|f_{\boldsymbol{\theta}}(\boldsymbol{x})-f_{0,\boldsymbol{z}}(\boldsymbol{x})|$ since $P_0(\boldsymbol{\mu}_0-\widetilde{\boldsymbol{\mu}}\in\mathcal{Q})\rightarrow1$ as before. Therefore by Corollary \ref{cor:2thetainfty} with $\boldsymbol{r}=\boldsymbol{0}$, we have for $m_n\rightarrow\infty$, $\mathrm{E}_0\Pi[|M-M_0|>m_n(n^{-1/2}+\sum_{k=1}^d\delta_{n,k}^{\alpha_k})|\boldsymbol{Y}]\leq\mathrm{E}_0\Pi[\|f_{\boldsymbol{\theta}}-f_{0,\boldsymbol{z}}\|_\infty>m_n(n^{-1/2}+\sum_{k=1}^d\delta_{n,k}^{\alpha_k})|\boldsymbol{Y}]\rightarrow0$,
uniformly over $\|f_0\|_{\boldsymbol{\alpha},\infty}\leq R$.

To prove the last part, note that $\delta_{n,k}=n^{-1/(2\alpha_k)}, k=1,\dotsc,d$, comes from equating the two terms in the second stage rates for $\boldsymbol{\mu}$ and $M$, i.e., $n^{-1/2}=\sum_{k=1}^d\delta_{n,k}^{\alpha_k}$. To solve for $\delta_{n,k}$, take $\delta_{n,k}=(d^{-1}\delta_n)^{1/\alpha_k}$, where $\delta_n$ is a positive sequence in $n$ that does not depend on $k$. It follows that $\delta_n=n^{-1/2}$ and hence $\delta_{n,k}=n^{-1/(2\alpha_k)}$. The condition $\min_{1\leq k\leq d}\delta_{n,k}=\rho_n\epsilon_n$ or equivalently $\epsilon_n=o(\min_{1\leq k\leq d}\delta_{n,k})$ is fulfilled when $1/(2\underline{\alpha})<\alpha^{*}(1-\underline{\alpha}^{-1})/(2\alpha^{*}+d)$ for $\underline{\alpha}=\min_{1\leq k\leq d}\alpha_k$. By rearranging, we need $2\underline{\alpha}\alpha^{*}-4\alpha^{*}-d>0$. Since $\underline{\alpha}>2$ by Assumption 2, we have $2\underline{\alpha}\alpha^{*}-4\alpha^{*}-d=2(\underline{\alpha}-2)\alpha^{*}-d\geq2\underline{\alpha}^2-4\underline{\alpha}-d$. Thus, it suffices to find $\underline{\alpha}$ such that $2\underline{\alpha}^2-4\underline{\alpha}-d>0$ and this is satisfied if $\underline{\alpha}>1+\sqrt{1+d/2}$ under the constraint that $\underline{\alpha}>2$.
\end{proof}

\section{Appendix}\label{sec:appendix}
In this section, we collect some auxiliary results and technical lemmas that were used in several places to prove the main theorems in the previous section.

The next two results concern contraction rates and credible band coverage for the regression function $f$ and its derivatives, and they correspond to Theorems 4.4 and 5.3 of \citet{yoo2016} respectively.
\begin{theorem}\label{th:frate}
Let $J_{n,k}\asymp(n/\log{n})^{\alpha^{*}/\{\alpha_k(2\alpha^{*}+d)\}},k=1,\dotsc,d$. Then under Assumption 1, we have for any sequence $m_n\rightarrow\infty$,
\begin{align*}
\sup_{\|f_0\|_{\boldsymbol{\alpha},\infty}\leq R}\mathrm{E}_0\Pi(\|D^{\boldsymbol{r}}f-D^{\boldsymbol{r}}f_0\|_\infty>m_n(\log{n}/n)^{\alpha^{*}\{1-\sum_{k=1}^d(r_k/\alpha_k)\}/(2\alpha^{*}+d)}|\boldsymbol{Y})\rightarrow0.
\end{align*}
In particular, contraction rate for $f$ can be recovered by setting $\boldsymbol{r}=\boldsymbol{0}$.
\end{theorem}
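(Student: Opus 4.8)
The cleanest route is to observe that this statement is exactly the sup-norm contraction rate for $D^{\boldsymbol{r}}f$ established as Theorem 4.4 of \citet{yoo2016}, so the plan is to import that argument; I sketch its structure. Write the target rate as $\zeta_{n,\boldsymbol{r}}=(\log n/n)^{\alpha^{*}\{1-\sum_{k=1}^d(r_k/\alpha_k)\}/(2\alpha^{*}+d)}$, whose exponent is positive precisely because of the admissibility constraint $\sum_k(r_k/\alpha_k)<1$. Since the conditional posterior of $D^{\boldsymbol{r}}f$ given $(\boldsymbol{Y},\sigma)$ is the Gaussian process $\mathrm{GP}(\boldsymbol{A}_{\boldsymbol{r}}\boldsymbol{Y}+\boldsymbol{c}_{\boldsymbol{r}}\boldsymbol{\eta},\sigma^2\Sigma_{\boldsymbol{r}})$ of \eqref{eq:tAr}--\eqref{eq:tsigmar}, with posterior mean $D^{\boldsymbol{r}}\widetilde f=\boldsymbol{A}_{\boldsymbol{r}}\boldsymbol{Y}+\boldsymbol{c}_{\boldsymbol{r}}\boldsymbol{\eta}$, I would split $\|D^{\boldsymbol{r}}f-D^{\boldsymbol{r}}f_0\|_\infty\leq\|D^{\boldsymbol{r}}f-D^{\boldsymbol{r}}\widetilde f\|_\infty+\|D^{\boldsymbol{r}}\widetilde f-D^{\boldsymbol{r}}f_0\|_\infty$ into a posterior-spread term and a centering (bias) term, and bound each by a constant multiple of $\zeta_{n,\boldsymbol{r}}$ in $P_0$-probability.

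For the spread term, conditionally on $\sigma$ the process $D^{\boldsymbol{r}}f-D^{\boldsymbol{r}}\widetilde f$ is centered Gaussian and supported on the $J$-dimensional spline space, so its supremum concentrates around its mean by Borell's inequality (Proposition A.2.1 of \citet{empirical}), and that mean is of order $\nu_{n,\boldsymbol{r}}\sqrt{\log J}$, where $\nu_{n,\boldsymbol{r}}^2=\sup_{\boldsymbol{x}}\sigma^2\Sigma_{\boldsymbol{r}}(\boldsymbol{x},\boldsymbol{x})$ is the supremal pointwise variance. Exactly as in the variance computation in the proof of Theorem \ref{th:crmurho}, the eigenvalue bound \eqref{eq:BBO} of Lemma \ref{lem:BB}, the bound on $\boldsymbol{W}_{\boldsymbol{r}}\boldsymbol{W}_{\boldsymbol{r}}^T$ from Lemma \ref{lem:wr}, and the partition-of-unity property of B-splines give $\nu_{n,\boldsymbol{r}}^2\lesssim n^{-1}\prod_{k=1}^dJ_k^{1+2r_k}$. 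Substituting $J_k\asymp(n/\log n)^{\alpha^{*}/\{\alpha_k(2\alpha^{*}+d)\}}$ and using $\sum_k\alpha^{*}/\alpha_k=d$, one checks that $\log J\asymp\log n$ and $\nu_{n,\boldsymbol{r}}^2\log n\asymp\zeta_{n,\boldsymbol{r}}^2$; the logarithmic factor inside the definition of $J_k$ is calibrated precisely so that the $\sqrt{\log n}$ from the supremum is absorbed into the rate. The dependence on $\sigma$ is removed by plugging in the consistent $\widetilde\sigma_n$ (empirical Bayes) or by integrating against the posterior of $\sigma$, which concentrates near $\sigma_0$ (hierarchical Bayes) via Proposition \ref{prop:var1}, so that $\|D^{\boldsymbol{r}}f-D^{\boldsymbol{r}}\widetilde f\|_\infty=O_{P_0}(\zeta_{n,\boldsymbol{r}})$.

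For the centering term, write $\boldsymbol{Y}=\boldsymbol{F}_0+\boldsymbol{\varepsilon}$ and decompose $D^{\boldsymbol{r}}\widetilde f-D^{\boldsymbol{r}}f_0=\boldsymbol{A}_{\boldsymbol{r}}\boldsymbol{\varepsilon}+(\boldsymbol{A}_{\boldsymbol{r}}\boldsymbol{F}_0+\boldsymbol{c}_{\boldsymbol{r}}\boldsymbol{\eta}-D^{\boldsymbol{r}}f_0)$. The first piece is a mean-zero Gaussian process whose supremal variance is again of order $\nu_{n,\boldsymbol{r}}^2$, so the same maximal-inequality argument controls it by $\zeta_{n,\boldsymbol{r}}$. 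The second, deterministic, piece is the regularized B-spline approximation error: comparing the ridge coefficient $\widehat{\boldsymbol{\theta}}=(\boldsymbol{B}^T\boldsymbol{B}+\boldsymbol{\Omega}^{-1})^{-1}(\boldsymbol{B}^T\boldsymbol{F}_0+\boldsymbol{\Omega}^{-1}\boldsymbol{\eta})$ against the coefficients of the best tensor-product spline approximant to $f_0$, and invoking the anisotropic spline approximation bounds of \citet{deBoor} (Chapter XII) together with \eqref{eq:BBO} and Lemma \ref{lem:wr}, yields $\|\boldsymbol{A}_{\boldsymbol{r}}\boldsymbol{F}_0+\boldsymbol{c}_{\boldsymbol{r}}\boldsymbol{\eta}-D^{\boldsymbol{r}}f_0\|_\infty\lesssim\sum_{k=1}^dJ_k^{-(\alpha_k-r_k)}$, which is $o(\zeta_{n,\boldsymbol{r}})$ under the stated choice of $J_k$. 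Assembling the three bounds and converting the $O_{P_0}(\zeta_{n,\boldsymbol{r}})$ statements into the posterior-probability statement via Markov's inequality and $m_n\to\infty$ gives the claim; uniformity over $\|f_0\|_{\boldsymbol{\alpha},\infty}\leq R$ follows because every constant above depends on $f_0$ only through $R$. I expect the sharp control of the Gaussian supremum --- obtaining the rate with no logarithmic factor beyond the one already built into $J_k$ --- to be the main obstacle, as it requires the precise pointwise-variance order and the intrinsic-metric entropy of the spline space rather than a crude union bound.
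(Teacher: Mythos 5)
Your proposal is correct and takes essentially the same approach as the paper: the paper gives no proof of this theorem at all, importing it verbatim as Theorem 4.4 of \citet{yoo2016}, which is exactly your opening move. Your sketch of that reference's internal argument (splitting into posterior spread and centering bias, controlling the Gaussian supremum via a maximal inequality with the eigenvalue bounds of Lemmas \ref{lem:BB}--\ref{lem:wr}, bounding the bias by anisotropic spline approximation theory, and removing $\sigma$ via Proposition \ref{prop:var1}) is a faithful outline of how the cited result is actually proved.
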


Consider the simultaneous credible band $\{f:\|D^{\boldsymbol{r}}f-D^{\boldsymbol{r}}\widetilde{f}\|_\infty\leq\rho R_{n,\boldsymbol{r},\gamma}\}$, where the quantile $R_{n,\boldsymbol{r},\gamma}$ is chosen such that $\Pi(\|D^{\boldsymbol{r}}f-D^{\boldsymbol{r}}\widetilde{f}\|_\infty\leq R_{n,\boldsymbol{r},\gamma}|\boldsymbol{Y})=1-\gamma$ and $\rho>0$ is some large enough constant.
\begin{theorem}\label{th:fcred}
Let $J_{n,k}\asymp(n/\log{n})^{\alpha^{*}/\{\alpha_k(2\alpha^{*}+d)\}},k=1,\dotsc,d$. Then under Assumption 1,
\begin{enumerate}
\item $\inf_{\|f_0\|_{\boldsymbol{\alpha},\infty}\leq R}P_0(\|D^{\boldsymbol{r}}\widetilde{f}-D^{\boldsymbol{r}}f_0\|_\infty\leq\rho R_{n,\boldsymbol{r},\gamma})\rightarrow1$,
\item $R_{n,\boldsymbol{r},\gamma}\asymp(\log{n}/n)^{\alpha^{*}\{1-\sum_{k=1}^d(r_k/\alpha_k)\}/(2\alpha^{*}+d)}$ in $P_0$-probability.
\end{enumerate}
\end{theorem}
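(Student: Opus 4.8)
The plan is to treat both assertions through the exact Gaussian structure of the posterior: conditionally on $\sigma$, the process $D^{\boldsymbol{r}}f-D^{\boldsymbol{r}}\widetilde{f}$ is centered Gaussian with covariance kernel $\sigma^2\Sigma_{\boldsymbol{r}}$ from \eqref{eq:tsigmar}, so that $R_{n,\boldsymbol{r},\gamma}$ is a quantile of the supremum of this finite B-spline series. First I would compute the pointwise standard deviation $\nu_{n,\boldsymbol{r}}^2:=\sup_{\boldsymbol{x}}\sigma^2\Sigma_{\boldsymbol{r}}(\boldsymbol{x},\boldsymbol{x})$. Using the eigenvalue bounds on $\boldsymbol{B}^T\boldsymbol{B}+\boldsymbol{\Omega}^{-1}$ and on $\boldsymbol{W}_{\boldsymbol{r}}\boldsymbol{W}_{\boldsymbol{r}}^T$ together with the partition-of-unity bound $\|\boldsymbol{b}_{\boldsymbol{J},\boldsymbol{q}-\boldsymbol{r}}(\boldsymbol{x})\|^2\lesssim1$ (exactly as in the proof of Theorem~\ref{th:crmurho}), this gives $\nu_{n,\boldsymbol{r}}^2\asymp n^{-1}\prod_{l}J_l\prod_kJ_k^{2r_k}$, which under the stated choice of $\boldsymbol{J}$ equals $(\log n/n)^{2\alpha^*\{1-\sum_k(r_k/\alpha_k)\}/(2\alpha^*+d)}/\log n$. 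The target radius is therefore $\nu_{n,\boldsymbol{r}}$ inflated by a $\sqrt{\log n}$ factor arising from the $\asymp\prod_lJ_l$ active basis functions.

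For assertion (2) I would sandwich $R_{n,\boldsymbol{r},\gamma}$ between constant multiples of $\nu_{n,\boldsymbol{r}}\sqrt{\log J}$, where $J=\prod_lJ_l$ and $\log J\asymp\log n$. The upper bound comes from a chaining/entropy estimate for the expected supremum of the Gaussian series over $[0,1]^d$ in its intrinsic metric, yielding $\mathrm{E}\|D^{\boldsymbol{r}}f-D^{\boldsymbol{r}}\widetilde f\|_\infty\lesssim\nu_{n,\boldsymbol{r}}\sqrt{\log J}$, followed by Borell's inequality to pass from the mean to the $(1-\gamma)$-quantile, the concentration scale $\nu_{n,\boldsymbol{r}}$ being of smaller order than the mean. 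The matching lower bound is the delicate part: I would exhibit $\asymp J$ points, one per knot cell and separated by $\gtrsim J_k^{-1}$ in direction $k$, at which the process values are asymptotically uncorrelated with variance $\gtrsim\nu_{n,\boldsymbol{r}}^2$ (using the local support and near-orthogonality of B-splines), so a Sudakov-type argument forces $\mathrm{E}\|\cdot\|_\infty\gtrsim\nu_{n,\boldsymbol{r}}\sqrt{\log J}$; a two-sided Borell bound then gives $R_{n,\boldsymbol{r},\gamma}\gtrsim\nu_{n,\boldsymbol{r}}\sqrt{\log J}$. The unknown $\sigma$ is handled by substituting the consistent $\widetilde\sigma_n$ (Proposition~\ref{prop:var1}) in the empirical-Bayes case, or by conditioning on the event that the hierarchical posterior for $\sigma$ concentrates near $\sigma_0$; either way $\nu_{n,\boldsymbol{r}}$ is unchanged up to constants, so the quantile order holds in $P_0$-probability.

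For assertion (1) I would split $\|D^{\boldsymbol{r}}\widetilde f-D^{\boldsymbol{r}}f_0\|_\infty$ by the triangle inequality into a stochastic part $\|D^{\boldsymbol{r}}\widetilde f-\mathrm{E}_0 D^{\boldsymbol{r}}\widetilde f\|_\infty$ and a bias part $\|\mathrm{E}_0 D^{\boldsymbol{r}}\widetilde f-D^{\boldsymbol{r}}f_0\|_\infty$. Because $\widetilde f$ is affine in $\boldsymbol{Y}$, under $P_0$ the stochastic part is again the supremum of a centered Gaussian field whose pointwise variance is of the same order $\nu_{n,\boldsymbol{r}}^2$, so the maximal inequality and Borell bound of the previous step make it $O_{P_0}(\nu_{n,\boldsymbol{r}}\sqrt{\log J})=O_{P_0}(R_{n,\boldsymbol{r},\gamma})$. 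The bias is controlled by B-spline approximation theory for $f_0\in\mathcal{H}^{\boldsymbol{\alpha}}$: it is $\lesssim\max_kJ_k^{-(\alpha_k-r_k)}\asymp(\log n/n)^{\alpha^*\{1-\max_k(r_k/\alpha_k)\}/(2\alpha^*+d)}$, and since $\max_k(r_k/\alpha_k)\le\sum_k(r_k/\alpha_k)$ this is $\lesssim R_{n,\boldsymbol{r},\gamma}$. Hence the whole quantity is $O_{P_0}(R_{n,\boldsymbol{r},\gamma})$ uniformly over $\|f_0\|_{\boldsymbol{\alpha},\infty}\le R$, so choosing the inflation constant $\rho$ large enough yields coverage tending to one.

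The main obstacle I anticipate is the two-sided control of $\mathrm{E}\|D^{\boldsymbol{r}}f-D^{\boldsymbol{r}}\widetilde f\|_\infty$, and especially the lower bound $\gtrsim\nu_{n,\boldsymbol{r}}\sqrt{\log J}$: it is precisely this anti-concentration, exploiting the near-independence of the B-spline coefficients across separated cells, that pins down the radius at the correct order rather than merely bounding it above, and hence guarantees the optimal size claimed in (2). The secondary subtlety is the familiar honest-coverage point, namely that the bias is of comparable (not larger) order than the inflated standard deviation, which is exactly what the $\sqrt{\log n}$ factor built into $\boldsymbol{J}$ and the constant $\rho$ are designed to absorb.
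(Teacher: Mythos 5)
You should know at the outset that the paper contains no proof of this statement: Theorem \ref{th:fcred} is stated in the Appendix purely as an imported auxiliary result, the preceding sentence noting that it corresponds to Theorem 5.3 of \citet{yoo2016} (with Theorem \ref{th:frate} corresponding to Theorem 4.4 there), so the ``paper's own proof'' is a citation rather than an argument. Your sketch is nonetheless sound and follows essentially the same route as the proof in that reference: the exact finite-dimensional Gaussian form of the posterior, a maximal inequality plus Borell's inequality to upper-bound the $(1-\gamma)$-quantile, an anti-concentration (Sudakov-type) lower bound on the posterior spread obtained from nearly uncorrelated process values in well-separated knot cells via the local support of B-splines (precisely the feature that the remark in Section \ref{sec:tprior} singles out as the reason for choosing this prior), substitution of the consistent $\widetilde\sigma_n$ from Proposition \ref{prop:var1} to handle the unknown variance, and a bias--variance decomposition together with anisotropic spline approximation theory for coverage. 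One small caveat: the bias $\|\mathrm{E}_0 D^{\boldsymbol{r}}\widetilde f-D^{\boldsymbol{r}}f_0\|_\infty$ is not just the spline approximation error, because the smoothing operator $\boldsymbol{A}_{\boldsymbol{r}}$ acts on the function values and contributes a factor of order $\prod_l J_l^{r_l}$ against the sup-norm approximation error $\sum_k J_k^{-\alpha_k}$, so the defensible bound is $(\log n/n)^{\alpha^{*}\{1-\sum_k(r_k/\alpha_k)\}/(2\alpha^{*}+d)}$ --- i.e.\ exactly the radius order --- rather than your strictly smaller $\max_k J_k^{-(\alpha_k-r_k)}$; this is immaterial for the theorem, since coverage only needs the bias to be $\lesssim\rho R_{n,\boldsymbol{r},\gamma}$ for a sufficiently large constant $\rho$, but the comparable (not negligible) size of the bias is worth stating correctly since it is the very reason the inflation constant is needed.
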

Credible bands for $f$ is recovered by $\boldsymbol{r}=\boldsymbol{0}$ and for $D^{\boldsymbol{e}_k}f\equiv D_kf$ by $\boldsymbol{r}=\boldsymbol{e}_k$, in the latter case we also write the radius as $R_{n,k,\gamma}=R_{n,\boldsymbol{e}_k,\gamma}$.

The result below was taken from (3.10) and (3.11) of \citet{yoo2016}, and it shows that B-splines despite being a non-orthonormal basis, are approximately orthogonal under our assumption on the design points.
\begin{lemma}\label{lem:BB}
Let $J=\prod_{k=1}^dJ_k$. If the design points were chosen such that \eqref{assump:cdf} holds, then for some constants $C_1,C_2,c_1,c_2>0$,
\begin{align}
C_1(n/J)&\leq\boldsymbol{B}^T\boldsymbol{B}\leq C_2(n/J),\label{eq:BB}\\
C_1(n/J)+c_2^{-1}\leq\lambda_{\mathrm{min}}(\boldsymbol{B}^T\boldsymbol{B}+\boldsymbol{\Omega}^{-1})&\leq\lambda_{\mathrm{max}}(\boldsymbol{B}^T\boldsymbol{B}+\boldsymbol{\Omega}^{-1})\leq C_2(n/J)+c_1^{-1}.\label{eq:BBO}
\end{align}
\end{lemma}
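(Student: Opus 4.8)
The plan is to deduce both displays from a single norm equivalence for the quadratic form attached to $\boldsymbol{B}^T\boldsymbol{B}$. For any $\boldsymbol{\theta}\in\mathbb{R}^J$ write $f_{\boldsymbol{\theta}}=\boldsymbol{b}_{\boldsymbol{J},\boldsymbol{q}}^T\boldsymbol{\theta}$ for the corresponding tensor-product spline, so that $\boldsymbol{\theta}^T\boldsymbol{B}^T\boldsymbol{B}\boldsymbol{\theta}=\sum_{i=1}^n f_{\boldsymbol{\theta}}(\boldsymbol{X}_i)^2=n\int f_{\boldsymbol{\theta}}^2\,dG_n$. Since the extreme eigenvalues of $\boldsymbol{B}^T\boldsymbol{B}$ are the supremum and infimum of this Rayleigh quotient over $\boldsymbol{\theta}\neq\boldsymbol{0}$, it suffices to prove that $n\int f_{\boldsymbol{\theta}}^2\,dG_n\asymp (n/J)\|\boldsymbol{\theta}\|^2$ uniformly in $\boldsymbol{\theta}$, with constants free of $n$. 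I would establish this through the chain $\int f_{\boldsymbol{\theta}}^2\,dG_n\approx\int f_{\boldsymbol{\theta}}^2\,dG\asymp\|f_{\boldsymbol{\theta}}\|_2^2\asymp J^{-1}\|\boldsymbol{\theta}\|^2$, and then translate back into eigenvalue statements.

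The two $\asymp$ links are classical. Because $g$ is continuous and strictly positive on the compact set $[0,1]^d$, it is bounded between two positive constants, so $\int f_{\boldsymbol{\theta}}^2\,dG=\int f_{\boldsymbol{\theta}}^2 g\,d\boldsymbol{x}$ is comparable to the Lebesgue norm $\|f_{\boldsymbol{\theta}}\|_2^2$. The last link is the $L_2$-stability (Riesz basis property) of the tensor-product B-spline basis on quasi-uniform knots: each univariate system satisfies $\|\sum_j\theta_j B_{j,q_k}\|_2^2\asymp J_k^{-1}\sum_j\theta_j^2$ (cf.\ \citet{lschumaker}), and forming tensor products multiplies both the normalizing factor and the coefficient count across directions, giving $\|f_{\boldsymbol{\theta}}\|_2^2\asymp(\prod_{k}J_k^{-1})\|\boldsymbol{\theta}\|^2=J^{-1}\|\boldsymbol{\theta}\|^2$.

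The delicate step, and the main obstacle, is the first link: replacing $G_n$ by $G$ uniformly over all $\boldsymbol{\theta}$. I would control the discrepancy by multivariate integration by parts (summation by parts), bounding $|\int f_{\boldsymbol{\theta}}^2\,d(G_n-G)|$ by $\sup_{\boldsymbol{x}}|G_n(\boldsymbol{x})-G(\boldsymbol{x})|$ times the Hardy--Krause variation of $f_{\boldsymbol{\theta}}^2$. That variation is estimated through the mixed partials of $f_{\boldsymbol{\theta}}^2$; by the product rule each such derivative is a sum of terms of the form $(D^{\boldsymbol{s}}f_{\boldsymbol{\theta}})(D^{\boldsymbol{s}'}f_{\boldsymbol{\theta}})$, and a spline inverse (Bernstein-type) inequality gives $\|D_kf_{\boldsymbol{\theta}}\|_2\lesssim J_k\|f_{\boldsymbol{\theta}}\|_2$ in each direction. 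After Cauchy--Schwarz this produces a total factor $\prod_k J_k=J$, so the variation is $\lesssim J\|f_{\boldsymbol{\theta}}\|_2^2$. Combined with the design hypothesis \eqref{assump:cdf}, namely $\sup_{\boldsymbol{x}}|G_n(\boldsymbol{x})-G(\boldsymbol{x})|=o(\prod_k N_k^{-1})=o(J^{-1})$ since $N_k\asymp J_k$, the error becomes $o(J^{-1})\cdot J\|f_{\boldsymbol{\theta}}\|_2^2=o(\|f_{\boldsymbol{\theta}}\|_2^2)$, a genuinely lower-order perturbation of the main term $\int f_{\boldsymbol{\theta}}^2\,dG\asymp\|f_{\boldsymbol{\theta}}\|_2^2$. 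This is precisely where the sharp rate in \eqref{assump:cdf} is needed: a weaker discrepancy bound would not beat the factor $J$ generated by differentiating the splines.

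Assembling the chain yields \eqref{eq:BB}. For \eqref{eq:BBO}, the prior covariance bound $c_1\boldsymbol{I}_J\leq\boldsymbol{\Omega}\leq c_2\boldsymbol{I}_J$ inverts to $c_2^{-1}\boldsymbol{I}_J\leq\boldsymbol{\Omega}^{-1}\leq c_1^{-1}\boldsymbol{I}_J$, and Weyl's inequality for sums of symmetric matrices gives $\lambda_{\mathrm{min}}(\boldsymbol{B}^T\boldsymbol{B}+\boldsymbol{\Omega}^{-1})\geq\lambda_{\mathrm{min}}(\boldsymbol{B}^T\boldsymbol{B})+c_2^{-1}\geq C_1(n/J)+c_2^{-1}$ together with $\lambda_{\mathrm{max}}(\boldsymbol{B}^T\boldsymbol{B}+\boldsymbol{\Omega}^{-1})\leq C_2(n/J)+c_1^{-1}$, which is the claimed two-sided bound.
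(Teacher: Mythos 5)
Your argument is correct in substance, but note that the paper itself gives no proof of this lemma: it is imported verbatim from (3.10)--(3.11) of \citet{yoo2016}, and the proof there takes a genuinely different, entry-wise route. In \citet{yoo2016} one writes $(\boldsymbol{B}^T\boldsymbol{B})_{\boldsymbol{u},\boldsymbol{v}}=n\int\prod_{k}B_{u_k,q_k}B_{v_k,q_k}\,dG_n$, compares each entry with $n\int\prod_k B_{u_k,q_k}B_{v_k,q_k}\,dG$ using the same integration-by-parts device you employ, except applied to a product of two tensor B-splines, whose Hardy--Krause variation is $O(1)$; the discrepancy bound \eqref{assump:cdf} then makes each entry-wise error $o(n/J)$, and the bandedness of both $\boldsymbol{B}^T\boldsymbol{B}$ and the limiting Gram matrix (B-splines have compact support, so each row has a bounded number of nonzero entries) upgrades this to an $o(n/J)$ bound in operator norm. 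The eigenvalues of the Gram matrix $n\int\boldsymbol{b}_{\boldsymbol{J},\boldsymbol{q}}\boldsymbol{b}_{\boldsymbol{J},\boldsymbol{q}}^T\,dG$ are $\asymp n/J$ by B-spline $L_2$-stability and $g\asymp1$, which yields \eqref{eq:BB}. Your proof instead works at the level of the Rayleigh quotient, trading bandedness for a Bernstein-type inverse inequality: the factor $J$ that the entry-wise argument sees only through the $J^{-1}$ scaling of the stability constant appears in your argument as the variation bound $V(f_{\boldsymbol{\theta}}^2)\lesssim J\|f_{\boldsymbol{\theta}}\|_2^2$, and is exactly cancelled by the $o(\prod_k N_k^{-1})=o(J^{-1})$ discrepancy. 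Both routes consume \eqref{assump:cdf} at precisely the same strength, and your deduction of \eqref{eq:BBO} from \eqref{eq:BB} via Weyl's inequality and $c_2^{-1}\boldsymbol{I}_J\leq\boldsymbol{\Omega}^{-1}\leq c_1^{-1}\boldsymbol{I}_J$ coincides with what the cited proof does.

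One step of yours needs fleshing out. The Hardy--Krause variation controlling $\int f_{\boldsymbol{\theta}}^2\,d(G_n-G)$ (via the Hlawka--Zaremba identity, which is the correct form of multivariate integration by parts when $G$ is not uniform) is not only the $L_1$-norm of the full mixed partial $D^{(1,\dotsc,1)}f_{\boldsymbol{\theta}}^2$ over $[0,1]^d$: it also contains, for every nonempty proper subset $S\subsetneq\{1,\dotsc,d\}$, a face term $\int|D^{\boldsymbol{1}_S}f_{\boldsymbol{\theta}}^2(\boldsymbol{x}_S,\boldsymbol{1})|\,d\boldsymbol{x}_S$ with the complementary coordinates anchored at $1$. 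Your Cauchy--Schwarz/Bernstein bound handles these only after adding a trace inequality for splines, $\int (D^{\boldsymbol{s}}f_{\boldsymbol{\theta}})^2(\boldsymbol{x}_S,\boldsymbol{1})\,d\boldsymbol{x}_S\lesssim\bigl(\prod_{j\notin S}J_j\bigr)\|D^{\boldsymbol{s}}f_{\boldsymbol{\theta}}\|_2^2$, which costs a factor $J_j$ for each anchored direction; since anchored directions contribute no Bernstein factor $J_k$, every face term is again $\lesssim J\|f_{\boldsymbol{\theta}}\|_2^2$, so your final bound stands, but as written the step is incomplete rather than automatic.
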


\begin{lemma}\label{lem:wr}
Let $\boldsymbol{W}_{\boldsymbol{r}}$ be the finite difference matrix as seen in \eqref{eq:fprior}. Under the quasi-uniformity of the knot distribution, we have $\lambda_{\mathrm{max}}(\boldsymbol{W}_{\boldsymbol{r}}\boldsymbol{W}_{\boldsymbol{r}}^T)=O(\prod_{k=1}^dJ_k^{2r_k})$.
\end{lemma}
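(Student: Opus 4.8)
The plan is to reduce the multivariate bound to a one-dimensional estimate using the tensor-product structure of $\boldsymbol{W}_{\boldsymbol{r}}$, and then to control the univariate operator by factoring it into elementary finite-difference steps. First I would recall that, because the basis is a tensor product and the mixed derivative $D^{\boldsymbol{r}}=\prod_{k}\partial^{r_k}/\partial x_k^{r_k}$ acts coordinatewise, applying de Boor's derivative recursion (equations (15) and (16) of Chapter X of \citet{deBoor}) direction by direction shows that $\boldsymbol{W}_{\boldsymbol{r}}$ is the Kronecker product $\boldsymbol{W}_{\boldsymbol{r}}=\bigotimes_{k=1}^d\boldsymbol{W}_{r_k}^{(k)}$, where $\boldsymbol{W}_{r_k}^{(k)}$ carries the order-$q_k$ coefficients to the coefficients, in the order-$(q_k-r_k)$ basis, of the $r_k$th derivative in the $k$th coordinate. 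Since eigenvalues of a Kronecker product factorize,
\begin{equation*}
\lambda_{\mathrm{max}}(\boldsymbol{W}_{\boldsymbol{r}}\boldsymbol{W}_{\boldsymbol{r}}^T)=\prod_{k=1}^d\lambda_{\mathrm{max}}\bigl(\boldsymbol{W}_{r_k}^{(k)}(\boldsymbol{W}_{r_k}^{(k)})^T\bigr)=\prod_{k=1}^d\|\boldsymbol{W}_{r_k}^{(k)}\|_{(2,2)}^2,
\end{equation*}
so it suffices to prove the univariate bound $\|\boldsymbol{W}_{r_k}^{(k)}\|_{(2,2)}=O(J_k^{r_k})$ for each $k$.

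For the univariate estimate I would write $\boldsymbol{W}_{r_k}^{(k)}=\boldsymbol{D}_{q_k}\boldsymbol{D}_{q_k-1}\cdots\boldsymbol{D}_{q_k-r_k+1}$ as a product of $r_k$ elementary difference operators, where each $\boldsymbol{D}_q$ lowers the spline order by one. By de Boor's formula each $\boldsymbol{D}_q$ is bidiagonal, with nonzero entries of the form $\pm(q-1)/(t_{j+q-1}-t_j)$. The denominator spans a fixed number $q-1$ of consecutive knot intervals; under quasi-uniformity each interval has length $\asymp N_k^{-1}\asymp J_k^{-1}$, so this gap is $\asymp J_k^{-1}$ and every entry of $\boldsymbol{D}_q$ is $O(J_k)$. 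Because $\boldsymbol{D}_q$ has at most two nonzero entries per row and per column, the interpolation bound $\|\boldsymbol{A}\|_{(2,2)}^2\le(\max_i\sum_j|A_{ij}|)(\max_j\sum_i|A_{ij}|)$ on maximal absolute row and column sums gives $\|\boldsymbol{D}_q\|_{(2,2)}=O(J_k)$. Submultiplicativity of the spectral norm over the $r_k$ factors then yields $\|\boldsymbol{W}_{r_k}^{(k)}\|_{(2,2)}=O(J_k^{r_k})$, and combining with the display above produces $\lambda_{\mathrm{max}}(\boldsymbol{W}_{\boldsymbol{r}}\boldsymbol{W}_{\boldsymbol{r}}^T)=O(\prod_{k=1}^dJ_k^{2r_k})$.

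The routine-but-essential care lies in two places. First, one must confirm the Kronecker factorization of $\boldsymbol{W}_{\boldsymbol{r}}$ is consistent with the lexicographic ordering of the coefficient vector, which is where the explicit expressions (8.1)--(8.4) of \citet{yoo2016} can be invoked. Second, and the genuine technical point, is the uniform control of all the gaps $t_{j+q-1}-t_j$ at order $J_k^{-1}$, simultaneously over the indices $j$ and over the finitely many intermediate orders $q\in\{q_k-r_k+1,\dotsc,q_k\}$ appearing in the factorization: here one uses that lowering the order reuses the same knot sequence and merely changes how many intervals the denominator spans, so quasi-uniformity and the fixed constant $q_k$ keep every such gap comparable to $J_k^{-1}$. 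Once this uniformity is secured the entrywise bound $O(J_k)$ on each $\boldsymbol{D}_q$ holds uniformly and the product estimate goes through.
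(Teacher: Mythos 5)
Your proposal is correct, but it proves the bound from scratch whereas the paper does not: the paper's entire proof is the chain $\lambda_{\mathrm{max}}(\boldsymbol{W}_{\boldsymbol{r}}\boldsymbol{W}_{\boldsymbol{r}}^T)=\lambda_{\mathrm{max}}(\boldsymbol{W}_{\boldsymbol{r}}^T\boldsymbol{W}_{\boldsymbol{r}})\leq\|\boldsymbol{W}_{\boldsymbol{r}}^T\boldsymbol{W}_{\boldsymbol{r}}\|_{(2,2)}\lesssim\prod_{k=1}^dJ_k^{2r_k}$, with the last inequality simply cited from (7.15) of \citet{yoo2016}. Your argument replaces that citation with a self-contained derivation, and every step checks out: the Kronecker factorization $\boldsymbol{W}_{\boldsymbol{r}}=\bigotimes_{k=1}^d\boldsymbol{W}_{r_k}^{(k)}$ is exactly what iterating de Boor's recursion direction by direction produces (and is consistent with the lexicographic ordering used in the paper and in (8.1)--(8.4) of \citet{yoo2016}); the identity $\lambda_{\mathrm{max}}\bigl(\bigotimes_k \boldsymbol{W}_{r_k}^{(k)}(\boldsymbol{W}_{r_k}^{(k)})^T\bigr)=\prod_k\lambda_{\mathrm{max}}\bigl(\boldsymbol{W}_{r_k}^{(k)}(\boldsymbol{W}_{r_k}^{(k)})^T\bigr)$ is valid because each factor is positive semidefinite; the entries $\pm(q-1)/(t_{j+q-1}-t_j)$ of each bidiagonal elementary factor are $O(J_k)$ since the denominator spans between $1$ and $q_k-1$ knot intervals, each of length $\asymp N_k^{-1}\asymp J_k^{-1}$ by quasi-uniformity and the fixed order $q_k$; and the bound $\|\boldsymbol{A}\|_{(2,2)}^2\le(\max_i\sum_j|A_{ij}|)(\max_j\sum_i|A_{ij}|)$ together with submultiplicativity of the spectral norm finishes the univariate estimate. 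You have also correctly isolated the one place where quasi-uniformity genuinely enters, namely the uniform lower bound on the knot gaps over all indices $j$ and all intermediate orders $q\in\{q_k-r_k+1,\dotsc,q_k\}$, which the paper's proof leaves implicit inside the external reference. What the two approaches buy is clear: the paper's route is a one-line deferral that keeps the appendix short at the cost of sending the reader to another paper, while yours makes the lemma verifiable in place and exposes the tensor-product and finite-difference structure that drives the rate $\prod_{k=1}^dJ_k^{2r_k}$; either is acceptable as a proof of the stated lemma.
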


\begin{proof}
Note that $\lambda_{\mathrm{max}}(\boldsymbol{W}_{\boldsymbol{r}}\boldsymbol{W}_{\boldsymbol{r}}^T)=\lambda_{\mathrm{max}}(\boldsymbol{W}_{\boldsymbol{r}}^T\boldsymbol{W}_{\boldsymbol{r}})
\leq\|\boldsymbol{W}_{\boldsymbol{r}}^T\boldsymbol{W}_{\boldsymbol{r}}\|_{(2,2)}\lesssim\prod_{k=1}^dJ_k^{2r_k}$, where the last upper bound was computed in (7.15) of \citet{yoo2016}.
\end{proof}

The following proposition shows that the single stage or the first stage (in the setting of two-stage procedure) empirical or hierarchical Bayes estimator of $\sigma^2$ is consistent. Note that this result corresponds to Proposition 4.1 of \citet{yoo2016}.
\begin{proposition}[First stage error variance]\label{prop:var1}
Suppose $J_{n,k}\asymp(n/\log{n})^{\alpha^{*}/\{\alpha_k(2\alpha^{*}+d)\}}$. Then uniformly over $\|f_0\|_{\boldsymbol{\alpha},\infty}\leq R$,
\begin{itemize}
\item [(a)] First stage empirical Bayes estimator $\widetilde{\sigma}_1^2$ converges to $\sigma_0^2$ under $P_0$-probability at the rate $\max\{n^{-1/2},n^{-2\alpha^{*}/(2\alpha^{*}+d)}\}$ .
\item [(b)] If inverse gamma prior is used, first stage posterior for $\sigma^2$ contracts to $\sigma_0^2$ at the same rate.
\item [(c)] If the prior used has continuous and positive density on $(0,\infty)$, then the first stage posterior for $\sigma$ is consistent.
\end{itemize}
\end{proposition}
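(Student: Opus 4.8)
The plan is to establish this by the same bias--variance route used for the second stage in the proof of Proposition~\ref{prop:var2}, now with the tensor-product B-spline basis $\boldsymbol{B}$ in place of the polynomial basis $\boldsymbol{Z}$; indeed the statement is the analogue of Proposition 4.1 of \citet{yoo2016}. Writing $\boldsymbol{U}=(\boldsymbol{B\Omega B}^T+\boldsymbol{I}_n)^{-1}$, substituting $\boldsymbol{Y}=\boldsymbol{F}_0+\boldsymbol{\varepsilon}$ with $\boldsymbol{F}_0=(f_0(\boldsymbol{X}_1),\dotsc,f_0(\boldsymbol{X}_n))^T$, and recalling $n_1\asymp n$, I would decompose $\mathrm{E}_0(\widetilde{\sigma}_1^2-\sigma_0^2)^2$ into a squared conditional bias and a variance term via equation (33) of \citet{expectquad}, exactly as in \eqref{eq:2tsigma0bias}. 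The key algebraic device is the binomial inverse theorem (Theorem 18.2.8 of \citet{davidmatrix}), which gives $\boldsymbol{U}=\boldsymbol{I}_n-\boldsymbol{B}(\boldsymbol{B}^T\boldsymbol{B}+\boldsymbol{\Omega}^{-1})^{-1}\boldsymbol{B}^T$, reducing everything to the $J\times J$ Gram matrix controlled by Lemma~\ref{lem:BB}.

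For the bias I would bound two pieces. The trace contribution is $n^{-1}\sigma_0^2\,\mathrm{tr}(\boldsymbol{I}_n-\boldsymbol{U})=n^{-1}\sigma_0^2\,\mathrm{tr}[\boldsymbol{B}^T\boldsymbol{B}(\boldsymbol{B}^T\boldsymbol{B}+\boldsymbol{\Omega}^{-1})^{-1}]\le\sigma_0^2 J/n$ since the latter matrix has eigenvalues in $[0,1)$, and with $J=\prod_{k=1}^d J_k\asymp(n/\log n)^{d/(2\alpha^{*}+d)}$ this is $\asymp(\log n)^{-d/(2\alpha^{*}+d)}n^{-2\alpha^{*}/(2\alpha^{*}+d)}$, hence at most $n^{-2\alpha^{*}/(2\alpha^{*}+d)}$. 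The approximation contribution $n^{-1}(\boldsymbol{F}_0-\boldsymbol{B\eta})^T\boldsymbol{U}(\boldsymbol{F}_0-\boldsymbol{B\eta})$ is where the main obstacle lies: because $\boldsymbol{\eta}$ is an arbitrary fixed prior mean rather than the best spline fit, the crude bound $\boldsymbol{U}\le\boldsymbol{I}_n$ alone does not make it small. I would instead insert the best B-spline approximant $\boldsymbol{B}\boldsymbol{\theta}_0$ of $f_0$ and split $\boldsymbol{F}_0-\boldsymbol{B\eta}=(\boldsymbol{F}_0-\boldsymbol{B}\boldsymbol{\theta}_0)+\boldsymbol{B}(\boldsymbol{\theta}_0-\boldsymbol{\eta})$. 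The first part is handled by $\boldsymbol{U}\le\boldsymbol{I}_n$ together with the anisotropic B-spline approximation bound $\|\boldsymbol{F}_0-\boldsymbol{B}\boldsymbol{\theta}_0\|_\infty\lesssim\sum_{k=1}^d J_k^{-\alpha_k}\asymp(n/\log n)^{-\alpha^{*}/(2\alpha^{*}+d)}$ (Theorem 22 of Chapter XII in \citet{deBoor}); the second uses $\boldsymbol{B}^T\boldsymbol{U}\boldsymbol{B}=\boldsymbol{B}^T\boldsymbol{B}(\boldsymbol{B}^T\boldsymbol{B}+\boldsymbol{\Omega}^{-1})^{-1}\boldsymbol{\Omega}^{-1}$, whose eigenvalues are $O(1)$ by Lemma~\ref{lem:BB} and $c_1\boldsymbol{I}_J\le\boldsymbol{\Omega}$, so that this term is $\lesssim n^{-1}\|\boldsymbol{\theta}_0-\boldsymbol{\eta}\|^2=O(J/n)$ because stability of the B-spline basis and $\|\boldsymbol{\eta}\|_\infty<\infty$ give $\|\boldsymbol{\theta}_0-\boldsymbol{\eta}\|_\infty=O(1)$ uniformly over $\|f_0\|_{\boldsymbol{\alpha},\infty}\le R$. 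Collecting these yields a bias of order $n^{-2\alpha^{*}/(2\alpha^{*}+d)}$ up to logarithmic factors.

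For the variance, writing $n\widetilde{\sigma}_1^2$ as the sum of a deterministic quadratic form, a cross term linear in $\boldsymbol{\varepsilon}$, and $\boldsymbol{\varepsilon}^T\boldsymbol{U}\boldsymbol{\varepsilon}$, I would invoke Lemma A.10 of \citet{yoo2016} with $\|\boldsymbol{U}\|_{(2,2)}\le1$ and the Gaussian errors of Assumption~1 to get $\mathrm{Var}(\boldsymbol{\varepsilon}^T\boldsymbol{U}\boldsymbol{\varepsilon})=O(n)$, and bound the cross term analogously, yielding $\mathrm{Var}(\widetilde{\sigma}_1^2)=O(1/n)$; Markov's inequality then delivers the $P_0$-rate $\max\{n^{-1/2},n^{-2\alpha^{*}/(2\alpha^{*}+d)}\}$ of part~(a), uniformly over the H\"older ball. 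Part~(b) follows as in Proposition~\ref{prop:var2}(b): the inverse-gamma posterior $\mathrm{IG}[(\beta_1+n_1)/2,(\beta_2+n_1\widetilde{\sigma}_1^2)/2]$ has mean $\lesssim n^{-1}+\widetilde{\sigma}_1^2$ and variance $\lesssim n^{-3}+n^{-1}\widetilde{\sigma}_1^4$, so it concentrates around $\widetilde{\sigma}_1^2$ and inherits the rate from~(a). For part~(c) I would note that $\widetilde{\sigma}_1^2$ is precisely the maximizer of the one-parameter marginal Gaussian scale likelihood from $\widetilde{\boldsymbol{Y}}|\sigma\sim\mathrm{N}_{n_1}[\boldsymbol{B\eta},\sigma^2(\boldsymbol{B\Omega B}^T+\boldsymbol{I}_{n_1})]$; since this is a regular one-dimensional model and the prior density is continuous and positive near $\sigma_0$, a Laplace-type concentration argument shows the posterior for $\sigma$ piles up near $\widetilde{\sigma}_1$, which converges to $\sigma_0$ by~(a), giving consistency. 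The principal difficulty remains the approximation-bias term of the second paragraph, where the mismatch between the fixed prior mean $\boldsymbol{\eta}$ and $f_0$ must be absorbed by the near-annihilating action of $\boldsymbol{U}$ on the spline space.
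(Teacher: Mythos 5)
Your proposal is correct in substance, but note first what the paper actually does with this statement: it does not prove it at all. The proposition is imported from earlier work, with the remark that it ``corresponds to Proposition 4.1 of \citet{yoo2016}.'' Your blind attempt is therefore a reconstruction of that external proof, built by transporting the paper's own second-stage argument (Proposition \ref{prop:var2}) back to the B-spline basis, and the transport is sound. You correctly isolated the one point where the first-stage problem genuinely differs from a routine copy of that proof: since the prior mean $\boldsymbol{\eta}$ is an arbitrary bounded vector rather than an approximant of $f_0$, the quadratic form $n^{-1}(\boldsymbol{F}_0-\boldsymbol{B\eta})^T\boldsymbol{U}(\boldsymbol{F}_0-\boldsymbol{B\eta})$ cannot be controlled by $\boldsymbol{U}\leq\boldsymbol{I}_n$ alone and must be split through the best spline approximant $\boldsymbol{B}\boldsymbol{\theta}_0$ --- exactly the role $\boldsymbol{Z\theta}_0$ plays in the paper's proof of Proposition \ref{prop:var2}, where $(\boldsymbol{F}_0-\boldsymbol{Z\theta}_0)$ and $\boldsymbol{Z}(\boldsymbol{\theta}_0-\boldsymbol{\xi})$ are treated separately. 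Your identity $\boldsymbol{B}^T\boldsymbol{U}\boldsymbol{B}=\boldsymbol{B}^T\boldsymbol{B}(\boldsymbol{B}^T\boldsymbol{B}+\boldsymbol{\Omega}^{-1})^{-1}\boldsymbol{\Omega}^{-1}$ is correct (this matrix equals $\{\boldsymbol{\Omega}+(\boldsymbol{B}^T\boldsymbol{B})^{-1}\}^{-1}\leq\boldsymbol{\Omega}^{-1}\leq c_1^{-1}\boldsymbol{I}_J$, using Lemma \ref{lem:BB} for invertibility), the trace bound $\mathrm{tr}(\boldsymbol{I}_n-\boldsymbol{U})\leq J$ is the standard step, and the variance treatment via Lemma A.10 of \citet{yoo2016} with $\|\boldsymbol{U}\|_{(2,2)}\leq1$ matches the paper's handling of $\mathrm{Var}(\boldsymbol{\varepsilon}^T\boldsymbol{U}\boldsymbol{\varepsilon})$ in the second stage. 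One small elision: the cross term linear in $\boldsymbol{\varepsilon}$ also needs the $\boldsymbol{B}\boldsymbol{\theta}_0$ splitting (together with $\boldsymbol{U}^2\leq\boldsymbol{U}$), which your word ``analogously'' hides, though all the tools are already on your page.

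Two caveats. First, with the sup-norm-optimal choice $J_k\asymp(n/\log{n})^{\alpha^{*}/\{\alpha_k(2\alpha^{*}+d)\}}$, the approximation piece is $\|\boldsymbol{F}_0-\boldsymbol{B}\boldsymbol{\theta}_0\|_\infty^2\asymp(n/\log{n})^{-2\alpha^{*}/(2\alpha^{*}+d)}$, which exceeds the displayed rate $n^{-2\alpha^{*}/(2\alpha^{*}+d)}$ by the factor $(\log{n})^{2\alpha^{*}/(2\alpha^{*}+d)}$; your route (and, as far as one can tell, any route based on this choice of $J_k$) recovers the proposition only up to that logarithmic factor, as you acknowledge. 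This is harmless for every use the paper makes of the result --- both in the proof of Theorem \ref{th:crmurho}, where only consistency of $\widetilde{\sigma}_1^2$ is needed, and in the proof of Theorem \ref{th:2M}, where the neighborhood $\mathcal{K}_n$ carries an arbitrary sequence $m_n\rightarrow\infty$ into which logarithms can be absorbed --- but it is not literally the clean rate in the statement. Second, part (c) is only sketched; to make it rigorous, observe that the marginal posterior density of $\sigma$ is proportional to $\pi(\sigma)\sigma^{-n_1}\exp\{-n_1\widetilde{\sigma}_1^2/(2\sigma^2)\}$, so the data enter only through $\widetilde{\sigma}_1^2$, and a direct Laplace-type bound around the maximizer $\widetilde{\sigma}_1$ combined with part (a) and the positivity and continuity of the prior density near $\sigma_0$ yields consistency; your outline is the right one, but this explicit reduction is what makes the ``regular one-dimensional model'' claim precise under the misspecification caused by the spline bias.
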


\bibliographystyle{apa}
\bibliography{referencemax}

\begin{thebibliography}{}

\bibitem[\protect\astroncite{Belitser et~al.}{2012}]{2stage}
Belitser, E., Ghosal, S., and van Zanten, H. (2012).
\newblock Optimal two-stage procedures for estimating location and size of the
  maximum of a multivariate regression function.
\newblock {\em Ann. Statist.}, 40(6):2850--2876.

\bibitem[\protect\astroncite{Blum}{1954}]{blum1954}
Blum, J.~R. (1954).
\newblock Multidimensional stochastic approximation methods.
\newblock {\em Ann. Math. Statist.}, 25(4):737--744.

\bibitem[\protect\astroncite{Castillo and
  Nickl}{2013}]{castillo2013nonparametric}
Castillo, I. and Nickl, R. (2013).
\newblock Nonparametric {Bernstein--von Mises} theorems in {Gaussian} white
  noise.
\newblock {\em Ann. Statist.}, 41(4):1999--2028.

\bibitem[\protect\astroncite{Castillo and Nickl}{2014}]{castillo2014bernstein}
Castillo, I. and Nickl, R. (2014).
\newblock On the {Bernstein--von Mises} phenomenon for nonparametric {Bayes}
  procedures.
\newblock {\em Ann. Statist.}, 42(5):1941--1969.

\bibitem[\protect\astroncite{Chen}{1988}]{max1988}
Chen, H. (1988).
\newblock Lower rate of convergence for locating a maximum of a function.
\newblock {\em Ann. Statist.}, 16(3):1330--1334.

\bibitem[\protect\astroncite{Cox}{1993}]{cox1993}
Cox, D.~D. (1993).
\newblock An analysis of {Bayesian} inference for nonparametric regression.
\newblock {\em Ann. Statist.}, 21(2):903--923.

\bibitem[\protect\astroncite{de~Boor}{2001}]{deBoor}
de~Boor, C. (2001).
\newblock {\em A Practical Guide to Splines}.
\newblock Springer-Verlag New York, Inc., revised edition.

\bibitem[\protect\astroncite{Dippon}{2003}]{dippon2003}
Dippon, J. (2003).
\newblock Accelerated randomized stochastic optimization.
\newblock {\em Ann. Statist.}, 31(4):1260--1281.

\bibitem[\protect\astroncite{Fabian}{1967}]{fabian1967}
Fabian, V. (1967).
\newblock Stochastic approximation of minima with improved asymptotic speed.
\newblock {\em Ann. Math. Statist.}, 38(1):191--200.

\bibitem[\protect\astroncite{Facer and M\"{u}ller}{2003}]{muller2003}
Facer, M.~R. and M\"{u}ller, H.-G. (2003).
\newblock Nonparametric estimation of the location of a maximum in a response
  surface.
\newblock {\em J. Multivariate Anal.}, 87(1):191--217.

\bibitem[\protect\astroncite{Freedman}{1999}]{freedman1999}
Freedman, D. (1999).
\newblock On the {Bernstein-von Mises} theorem with infinite dimensional
  parameters.
\newblock {\em Ann. Statist.}, 27(4):1119--1140.

\bibitem[\protect\astroncite{Ghosal and van~der Vaart}{2017}]{ghosal2017}
Ghosal, S. and van~der Vaart, A.~W. (2017).
\newblock {\em Fundamentals of Nonparametric Bayesian Inference}.
\newblock Cambridge University Press, Cambridge.

\bibitem[\protect\astroncite{Harville}{1997}]{davidmatrix}
Harville, D.~A. (1997).
\newblock {\em Matrix Algebra from a Statistician's Perspective}.
\newblock Springer-Verlag New York, Inc.

\bibitem[\protect\astroncite{Hasminski\u{i}}{1979}]{hasminskii1979}
Hasminski\u{i}, R.~Z. (1979).
\newblock Lower bound for the risks of nonparametric estimates of the mode.
\newblock In {\em Contributions to Statistics (J. Jureckova, ed.)}, pages
  91--97.

\bibitem[\protect\astroncite{J{\o}rgensen et~al.}{1985}]{jorgensen}
J{\o}rgensen, M., C.~T.~Nielsen, N.~K., and Skakkeback, N.~E. (1985).
\newblock Parametrische und nichtparametrische modelle f\"{u}r wachstumsdaten.
\newblock In Pflug, G.~C., editor, {\em Med. Informatik und Statistik},
  volume~60, pages 74--87. Springer, Berlin.

\bibitem[\protect\astroncite{Kiefer and Wolfowitz}{1952}]{kiefer1952}
Kiefer, J. and Wolfowitz, J. (1952).
\newblock Stochastic estimation of the maximum of a regression function.
\newblock {\em Ann. Math. Statist.}, 23(3):462--466.

\bibitem[\protect\astroncite{Kim and Pollard}{1990}]{gaussianunique}
Kim, J. and Pollard, D. (1990).
\newblock Cube root asymptotics.
\newblock {\em Ann. Statist.}, 18(1):191--219.

\bibitem[\protect\astroncite{Knapik et~al.}{2011}]{inverseprob}
Knapik, B.~T., van~der Vaart, A.~W., and van Zanten, J.~H. (2011).
\newblock {Bayesian} inverse problems with {Gaussian} priors.
\newblock {\em Ann. Statist.}, 39(5):2626--2657.

\bibitem[\protect\astroncite{Lan et~al.}{2009}]{Lan}
Lan, Y., Banerjee, M., and Michailidis, G. (2009).
\newblock Change-point estimation under adaptive sampling.
\newblock {\em Ann. Statist.}, 37(4):1752--1791.

\bibitem[\protect\astroncite{Mokkadem and Pelletier}{2007}]{pelletier}
Mokkadem, A. and Pelletier, M. (2007).
\newblock A companion for the {Kiefer-Wolfowitz-Blum} stochastic approximation
  algorithm.
\newblock {\em Ann. Statist.}, 35(4):1749--1772.

\bibitem[\protect\astroncite{M\"{u}ller}{1985}]{muller1985}
M\"{u}ller, H.-G. (1985).
\newblock Kernel estimators of zeros and of location and size of extrema of
  regression function.
\newblock {\em Scand. J. Stat.}, 12(3):221--232.

\bibitem[\protect\astroncite{M\"{u}ller}{1989}]{muller1989}
M\"{u}ller, H.-G. (1989).
\newblock Adaptive nonparametric peak estimation.
\newblock {\em Ann. Statist.}, 17(3):1053--1069.

\bibitem[\protect\astroncite{Polyak and Tsybakov}{1990}]{tsybakov1990b}
Polyak, B.~T. and Tsybakov, A.~B. (1990).
\newblock Optimal orders of accuracy for search algorithm of stochastic
  optimization.
\newblock {\em Probl. Inf. Transm.}, 26(2):126--133.

\bibitem[\protect\astroncite{Schumaker}{2007}]{lschumaker}
Schumaker, L. (2007).
\newblock {\em Spline Functions: Basic Theory}.
\newblock Cambridge University Press, New York, third edition.

\bibitem[\protect\astroncite{Searle}{1982}]{expectquad}
Searle, S.~R. (1982).
\newblock {\em Matrix Algebra Useful for Statistics}.
\newblock John Wiley and Sons, Inc.

\bibitem[\protect\astroncite{Shen and Ghosal}{2015}]{shen2015}
Shen, W. and Ghosal, S. (2015).
\newblock Adaptive {Bayesian} procedures using random series priors.
\newblock {\em Scand. J. Statist.}, 42:1194--1213.

\bibitem[\protect\astroncite{Shoung and Zhang}{2001}]{zhang2001}
Shoung, J.-M. and Zhang, C.-H. (2001).
\newblock Least squares estimators of the mode of a unimodal regression
  function.
\newblock {\em Ann. Statist.}, 29(3):648--665.

\bibitem[\protect\astroncite{Silverman}{1985}]{silverman}
Silverman, B.~W. (1985).
\newblock Some aspects of the spline smoothing approach to nonparametric
  regression curve fitting (with discussion).
\newblock {\em J. Roy. Statist. Soc. Ser. B}, 47(1):1--52.

\bibitem[\protect\astroncite{Szabo et~al.}{2015}]{botond2015}
Szabo, B., van~der Vaart, A.~W., and van Zanten, J.~H. (2015).
\newblock Frequentist coverage of adaptive nonparametric {Bayesian} credible
  sets.
\newblock {\em Ann. Statist.}, 43(4):1391--1428.

\bibitem[\protect\astroncite{Tang et~al.}{2011}]{Tang}
Tang, R., Banerjee, M., and Michailidis, G. (2011).
\newblock A two-stage hybrid procedure for estimating an inverse regression
  function.
\newblock {\em Ann. Statist.}, 39(2):956--989.

\bibitem[\protect\astroncite{Tsybakov}{1990}]{tsybakov1990}
Tsybakov, A.~B. (1990).
\newblock Recurrent estimation of the mode of a multivariate distribution.
\newblock {\em Probl. Inf. Transm.}, 26(1):31--37.

\bibitem[\protect\astroncite{van~der Vaart}{1998}]{asymp}
van~der Vaart, A.~W. (1998).
\newblock {\em Asymptotic Statistics}.
\newblock Cambridge University Press, Cambridge.

\bibitem[\protect\astroncite{van~der Vaart and van
  Zanten}{2008}]{van2008reproducing}
van~der Vaart, A.~W. and van Zanten, J.~H. (2008).
\newblock {\em Reproducing kernel Hilbert spaces of Gaussian priors.},
  volume~3, pages 200--222.
\newblock Inst. Math. Stat. Collect.

\bibitem[\protect\astroncite{van~der Vaart and Wellner}{1996}]{empirical}
van~der Vaart, A.~W. and Wellner, J.~A. (1996).
\newblock {\em Weak Convergence and Empirical Process With Applications to
  Statistics}.
\newblock Springer-Verlag New York, Inc.

\bibitem[\protect\astroncite{Yoo and Ghosal}{2016}]{yoo2016}
Yoo, W.~W. and Ghosal, S. (2016).
\newblock Supremum norm posterior contraction and credible sets for
  nonparametric multivariate regression.
\newblock {\em Ann. Statist.}, 44(3):1069--1102.

\bibitem[\protect\astroncite{Yoo and Ghosal}{2018}]{yoo2018supp}
Yoo, W.~W. and Ghosal, S. (2018).
\newblock Supplement to {Bayesian} mode and maximum estimation and accelerated
  rates of contraction.

\bibitem[\protect\astroncite{Yoo et~al.}{2017}]{yoo2017}
Yoo, W.~W., Rousseau, J., and Rivoirard, V. (2017).
\newblock Adaptive supremum norm posterior contraction: spike-and-slab priors
  and anisotropic {Besov} spaces.
\newblock arXiv: 1708.01909 [math.ST].

\bibitem[\protect\astroncite{Yoo and van~der Vaart}{2018}]{yoo2018}
Yoo, W.~W. and van~der Vaart, A.~W. (2018).
\newblock The {Bayes Lepski's} method and credible bands through volume of
  tubular neighborhoods.
\newblock arXiv: 1711.06926 [math.ST].

\end{thebibliography}
\end{document}